\numberwithin{table}{section}
\newcommand{\nn}{\nonumber}
\newtheorem{remark}{Remark}
\begin{document}
\title{\large On the Optimal Linear Convergence Rate of a Generalized Proximal Point Algorithm}

\author{Min Tao$^{1}$  \and  Xiaoming Yuan$^{2}$
\footnotetext[1]{({\tt taom@nju.edu.cn}) Department of
    Mathematics, Nanjing University, Nanjing, 210093, China. This author was supported by the Natural Science Foundation of China: NSFC-11301280 and     NSFC-11471156; and the Fundamental Research Funds for the Central Universities: 020314330019.} \footnotetext[2]{({\tt xmyuan@hkbu.edu.hk}) Department of Mathematics, Hong Kong Baptist University, Hong Kong, China. This author was supported by the General Research Fund from Hong Kong Research Grants Council: 12302514.}}

\maketitle

\begin{center}
First version: February 17, 2015; Second version: \today
\end{center}

\bigskip

\begin{abstract}
The proximal point algorithm (PPA) has been well studied in the literature. In particular, its linear convergence rate has been studied by Rockafellar in 1976 under certain condition. We consider a generalized PPA in the generic setting of finding a zero point of a maximal monotone operator, and show that the condition proposed by Rockafellar can also sufficiently ensure the linear convergence rate for this generalized PPA. Indeed we show that these linear convergence rates are optimal. Both the exact and inexact versions of this generalized PPA are discussed. The motivation to consider this generalized PPA is that it includes as special cases the relaxed versions of some splitting methods that are originated from PPA. Thus, linear convergence results of this generalized PPA can be used to better understand the convergence of some widely used algorithms in the literature. We focus on the particular convex minimization context and specify Rockafellar's condition to see how to ensure the linear convergence rate for some efficient numerical schemes, including the classical augmented Lagrangian method proposed by Hensen and Powell in 1969 and its relaxed version, the original alternating direction method of multipliers (ADMM) by Glowinski and Marrocco in 1975 and its relaxed version (i.e., the generalized ADMM by Eckstein and Bertsekas in 1992). Some refined conditions weaker than existing ones are proposed in these particular contexts.
\end{abstract}

\begin{keywords}
Convex programming, proximal point algorithm, augmented Lagrangian method, alternating direction method of multipliers, linear convergence rate
\end{keywords}

\pagestyle{myheadings} \thispagestyle{plain} \markboth{ }{}

\section{Introduction}
\label{intr}
\setcounter{equation}{0}
Let $H$ be a real Hilbert space with inner product $\langle\cdot, \cdot\rangle$. A set-valued mapping $T: H\rightarrow2^ H$ is said to be monotone if
$$ \langle z-z', w-w' \rangle \ge 0,\;\;\forall z,\;z' \in H, \; w\in T(z), w'\in T(z').$$
$T$ is said to be maximal monotone if, in addition, its graph is not properly contained in the graph of any other monotone operator.
A fundamental problem is finding a zero point, denoted by $z^*$, of a maximal monotone set-valued mapping $T$:
\begin{eqnarray}
\label{maxsol}
0\in T(z).
\end{eqnarray}
Throughout, the set of $T$'s zero point, denoted by $zer(T)$, is assumed to be nonempty.

The proximal point algorithm (PPA), which traces back to \cite{Mar70, Moreau}, has been playing an important role both theoretically and algorithmically for (\ref{maxsol}). Starting from an arbitrary point $z^0$ in $H$, the PPA iteratively generates its sequence $\{z^k\}$ by the scheme
\begin{eqnarray}
\label{PPA}
0 \in c_kT(z^{k+1})+ z^{k+1}-z^k,
 \end{eqnarray}
where $\{c_k\}$, called proximal parameter, is a sequence of positive real numbers. Indeed, as shown in \cite{Rock76}, the convergence of PPA can be ensured when $\{c_k\}$ is bounded away from zero. Moreover, an inexact version of PPA was proposed in \cite{Rock76}, allowing the subproblem (\ref{PPA}) to be solved approximately subject to some inexactness criteria. Conceptually, the inexact version of PPA can be written as
\begin{eqnarray}
\label{PPA-inexact}
0 \approx c_kT(z^{k+1})+ z^{k+1}-z^k,
 \end{eqnarray}
in which the accuracy should be judiciously chosen to guarantee its convergence. Let
\begin{eqnarray}
\label{resolvent}
J_{c_k T}:=(I+c_k T)^{-1}
\end{eqnarray}
denote the resolvent operator of the maximal monotone set-valued mapping $T$ for a positive scalar $c_k$ (Note that $J_{c_k T}$ is single-valued, see, e.g., \cite{EcBe}). Then, the exact and inexact versions of the PPA can be written, respectively, as
\begin{eqnarray}
\label{PPAscheme}
z^{k+1} = J_{c_k T} (z^k)
 \end{eqnarray}
and
 \begin{eqnarray}
\label{PPAscheme-inexact}
z^{k+1} \approx J_{c_k T} (z^k).
 \end{eqnarray}
Technically, (\ref{PPAscheme-inexact}) includes (\ref{PPAscheme}) as the special case where the tolerance of accuracy is zero. But we still discuss them individually because (\ref{PPAscheme}) is of particular interest and it may have stronger convergence, because it requires estimating the resolvent operator accurately.

Research results on the convergence of PPA can be found in earlier literature. For example, when $T$ is specified as the sum of a single-valued,
monotone and hemicontinuous mapping and the normal cone to a bounded set, i.e., the problem (\ref{maxsol}) reduces to a variational inequality, then some convergence of the exact version of PPA (\ref{PPA}) with $c_k \equiv c$ in the weak topology was investigated in \cite{Mar70,Mar72}. In \cite{Rock76}, the convergence of both the exact and inexact versions of PPA was comprehensively studied; it is indeed the work \cite{Rock76} that popularized PPA in optimization community. More specifically, under the condition that $\{c_k\}$ is bounded away from zero, the convergence of (\ref{PPAscheme-inexact}) (thus also (\ref{PPAscheme})) in the weak topology was proved when the accuracy for ``$\approx$" in (\ref{PPAscheme-inexact}) is specified into certain forms (see (A) and (B) of Section 1 in \cite{Rock76}). In fact, the exact version (\ref{PPAscheme}) was shown to find a solution point of (\ref{maxsol}) after finitely many iterations in \cite{Rock76}. Note that the strong monotonicity of $T$ is not required for the analysis in \cite{Rock76}. Moreover, if the inverse of $T$ (denoted by $T^{-1}$) is Lipschitz continuous at $0$ (see Definition \ref{invLipze} in Section \ref{Pre} or Section 3 in \cite{Rock76}) and $\{c_k\}$ is bounded away from zero, it was proved in \cite{Rock76} that the (\ref{PPAscheme-inexact}) (thus also (\ref{PPAscheme})) with some relative error control in its accuracy is linearly convergent. There are many other articles studying the PPA from various perspectives. For example, the PPA application to nonconvex problems studied in \cite{Fuk}, the PPA with variable metric in \cite{BQ}, a unified convergence rate analysis for some PPA-based decomposition methods in \cite{ST}, accelerated PPA schemes with a worst-case $O(1/k^2)$ convergence rate proposed in \cite{Guler92}, the logarithmic quadratic proximal extension considered in \cite{AusTeb05,ATB99}, and some other proximal-type algorithms in \cite{Teb}. We particularly refer to \cite{Guler91,Nemirovski2005} for some insightful analysis on the iteration complexity of PPA, which can be regarded as a measure of its worst-case convergence rate. Algorithmically, the PPA is the basis of a large number of celebrated methods, e.g., the projected gradient method \cite{Polyak}, the extragradient method \cite{Kor}, the extended extragradient and hyperplane schemes in \cite{AusTeb05}, the forward-backward operator splitting method \cite{LM}, and the accelerated projected gradient method \cite{Nesterov}.

%{\bf
%$$
%[Assumption]: \quad\hbox{$T^{-1}$ is Lipschitz continuous at $0$},
%$$
%}

As studied in \cite{CYuan,EcBe,Gabay83,Gol79}, the PPA schemes (\ref{PPAscheme}) and (\ref{PPAscheme-inexact}) can be generalized, respectively, as
\begin{eqnarray}\label{RPPAscheme-ck}
[\hbox{Exact Version}]: \quad \quad z^{k+1}= z^k - \gamma (z^k - J_{c_k T}(z^k)),
\end{eqnarray}
and
\begin{eqnarray}\label{RPPAscheme-ck-inexact}
[\hbox{Inexact Version}]: \quad \quad z^{k+1} \approx z^k - \gamma (z^k - J_{c_k T}(z^k)).
\end{eqnarray}
In (\ref{RPPAscheme-ck}) and (\ref{RPPAscheme-ck-inexact}), the proximal parameter sequence $\{c_k\}$ is also required to be bound away from zero, i.e., $c_k\ge \kappa>0$ for all $k$, and the relaxation factor $\gamma \in (0,2)$. The generalized PPA schemes (\ref{RPPAscheme-ck}) and (\ref{RPPAscheme-ck-inexact}) usually can accelerate the original PPA schemes numerically, see, e.g., \cite{Ber82, CGHY,FLHY} for some numerical verifications. Thus, from the PPA perspective itself, it is interesting to consider its generalized versions. Another reason of considering the generalized PPA schemes (\ref{RPPAscheme-ck}) and (\ref{RPPAscheme-ck-inexact}) is that the original PPA scheme (\ref{PPAscheme}) indeed is a unified illustration of some different schemes for different models ---- it has been well studied that some popular iterative schemes such as the Douglas-Rachford splitting method (DRSM) in \cite{DR56,LM}, the Peaceman-Rachford splitting method in \cite{PR,LM} and the augmented Lagrangian method (ALM) in \cite{Hes,Powell} are all special cases of the PPA (\ref{PPAscheme}) with specific choices of $T$. Thus, generalizing the PPA scheme (\ref{PPAscheme}) (Resp.,(\ref{PPAscheme-inexact})) as (\ref{RPPAscheme-ck}) (Resp., (\ref{RPPAscheme-ck-inexact})) represents a unified consideration for accelerating a series of well known splitting algorithms, especially in the convex optimization context. Let us just elaborate on the detail of the DRSM. Recall that (see \cite{EcBe}, also Section \ref{ADMMChar} for details) the DRSM is a special case of the PPA (\ref{PPAscheme}). In \cite{Gabay83}, it was proved that the alternating direction method of multipliers (ADMM), which was originally proposed in \cite{GM} and now finds many applications in a wide range of areas, is a special case of the DRSM. Thus, the ADMM is also a special case of (\ref{PPAscheme}) and it can be accelerated immediately by the scheme (\ref{RPPAscheme-ck}). This application inspired the so-called generalized ADMM in \cite{EcBe}, whose acceleration effectiveness was demonstrated recently in \cite{FLHY} by some statistical learning applications.

Our main purpose is to extend the analysis in \cite{Rock76} to the generalized PPA schemes (\ref{RPPAscheme-ck}) and (\ref{RPPAscheme-ck-inexact}), and establish their linear convergence rates under the same assumption as \cite{Rock76}: {\bf $T^{-1}$ is Lipschitz continuous at $0$}. We further show that these linear convergence rates are indeed optimal. Because of the just-mentioned explanation, studying the linear convergence of the generalized schemes (\ref{RPPAscheme-ck}) and (\ref{RPPAscheme-ck-inexact}) helps us better understand the convergence properties of a number of specific splitting methods in the convex optimization context through a unified perspective. In \cite{Gabay83}, the linear convergence of the exact version (\ref{RPPAscheme-ck}) with $c_k\equiv c$ and $\gamma\in(1,2)$ was discussed under the assumptions that $T$ is both strongly monotone (see Definition \ref{coer}) and Lipschitz continuous. In \cite{CYuan}, also under the assumption that $T$ is strongly monotone, the sublinear and linear convergence rates of the schemes (\ref{RPPAscheme-ck}) and (\ref{RPPAscheme-ck-inexact}) with $c_k\equiv c$ was studied; and these results were especially specified for the DRSM and PRSM scenarios. The results in \cite{CYuan} were then refined in \cite{DYin2} for the special DRSM and PRSM cases of (\ref{PPA}). Note that, as analyzed in \cite{Rock76}, ``the assumption of Lipschitz continuity of $T^{-1}$ at 0 turns out to be very natural in applications to convex programming". Indeed, we will show later that this assumption is weaker than those considered in \cite{CYuan,DYin2,Gabay83} (see the example in Section \ref{Sec-Pre-Example}) and it suffices to ensure the linear convergence of the schemes (\ref{RPPAscheme-ck}) and (\ref{RPPAscheme-ck-inexact}) for the case $\gamma \in (0,2)$. Thus, the distinction of this work from existing results in the literature is that stronger convergence rates are established under weaker conditions for the generalized PPA schemes (\ref{RPPAscheme-ck}) and (\ref{RPPAscheme-ck-inexact}). We will also consider several specific convex optimization contexts of the abstract model (\ref{maxsol}) and investigate how this assumption can be specified in these special contexts to ensure the linear convergence rates for some well-studied benchmark algorithms in the literature.

The rest of this paper is organized as follows. In Section \ref{Pre}, some preliminaries useful for further analysis are summarized. Then, we discuss the convergence and the linear convergence rate of the exact version of the generalized PPA (\ref{RPPAscheme-ck}) in Section \ref{linearCon}. In Section \ref{inexPPA}, the convergence and linear convergence rate of its inexact version (\ref{RPPAscheme-ck-inexact}), in which the inexactness criterion is also specified, is studied. Then, we revisit the assumption ``$T^{-1}$ is Lipschitz continuous at $0$" in Section \ref{relCon} and show that it can be further relaxed. In Section \ref{super}, we discuss the possibility of deriving the superlinear convergence for the schemes (\ref{RPPAscheme-ck}) and (\ref{RPPAscheme-ck-inexact}). In Section \ref{ALM}, we apply the scheme (\ref{RPPAscheme-ck}) to a canonical convex minimization model with linear constraints and discuss the linear convergence for the resulting generalized ALM scheme. In Section \ref{ADMMChar}, we focus on the analysis for the linear convergence of the ADMM and the generalized ADMM scheme, both are special cases of the scheme (\ref{RPPAscheme-ck}). Finally, some conclusions are made in Section \ref{conclusion}.

\section{Preliminaries}
\label{Pre}
\setcounter{equation}{0}
In this section, we recall some definitions and known results for further discussions.

\subsection{Some Definitions}
We first recall some basic definitions to be used in our analysis.

\begin{definition}
\label{nonexp}
Let $T:H\rightarrow 2^H$ be set-valued and maximal monotone. Then, $T$ is said to be nonexpansive if
$\|w' - w\|\le \|z' - z\|,\; \;\;\;\forall \; z,z'\in H,\;w\in T(z),\; w'\in T(z')$.
\end{definition}

\begin{definition}
\label{firm-nonexp}
Let $T:H\rightarrow 2^H$ be set-valued and maximal monotone. Then, $T$ is said to be firmly nonexpansive if
$\|w'-w\|^2\le \langle z'-z,w'-w \rangle,\; \;\;\; \forall \; z,z'\in H,\;w\in T(z),\; w'\in T(z')$.
\end{definition}

\begin{definition}
\label{coer}
Let $T:H\rightarrow 2^H$ be set-valued and maximal monotone. Then, $T$ is called $\alpha$-strongly monotone if $\langle z-z',w-w'\rangle \ge\alpha\|z-z'\|^2, \; \;\forall \; z,z'\in H,\;w\in T(z),\; w'\in T(z')$ for $\alpha>0$.
\end{definition}

%\begin{definition}
%\label{firmnon}
%Let $T:H\rightarrow 2^H$ be set-valued and maximal monotone. Then, $T$ is called $\beta$-firmly nonexpansive if
%$\langle T(z)-T(z'), w-w' \rangle\ge \beta \|w-w'\|^2, \;\forall \; z,z'\in H,\;w\in T(z),\; w'\in T(z')$ for $\beta>0$.
%\end{definition}

\begin{definition} \label{invLipze}
Let $T$ be set-valued and be defined on $H$. Then, $T^{-1}$ is called Lipschitz continuous at 0 with modulus $a\ge0$ if there is a unique solution $ z^*$ to $0\in T(z)$ (i.e. $T^{-1}(0)=\{  z^*\}$), and for some $\tau>0$ we have
$\| z- z^*\|\le a\|w\|$ whenever $z\in T^{-1}(w)$ and $\|w\|\le \tau$.
%\begin{eqnarray}
%\label{invTLip}
%\| z- z^*\|\le a\|w\|\; \mbox{whenever}\; z\in T^{-1}(w)\; \mbox{and}\; \|w\|\le \tau.
%\end{eqnarray}
\end{definition}

Definition \ref{invLipze} is quoted from \cite{Rock76}. Based on these definitions, we have some immediate conclusions. For examples, if $T$ is nonexpansive, then it is Lipschitz continuous.
% (2) $T$ is $\frac{1}{\beta}$-Lipschitz continuous when $T$ is $\beta$-firmly nonexpansive [{\bf If you wan to remove the definition of $\beta$-firmly nonexpansive is removed, shall we keep this?}].
 Moreover, clearly, the problem (\ref{maxsol}) has a unique solution point when  $T^{-1}$ is Lipschitz continuous at 0.

\subsection{An Example}\label{Sec-Pre-Example}

Consider the problem (\ref{maxsol}), where $T:\Re^2\rightarrow \Re^2$ is defined by
\begin{eqnarray}\label{opeT}
T(x_1,x_2):=\frac{1}{a}(x_2,-x_1)\; \hbox{with} \;a>0.
\end{eqnarray}
Obviously, the operator $T$ defined in (\ref{opeT}) is maximal monotone and the solution point of (\ref{maxsol}) with (\ref{opeT}) is $z^*=(0,0)$. Moreover, it holds
 \begin{eqnarray}\label{invT1}
 \|T^{-1}(z_1)-T^{-1}(z_2)\|\le a\|z_1-z_2\|, \;\;\forall z_1, z_2\in\Re^2,
 \end{eqnarray}
 and
\begin{eqnarray}\label{fle}
\langle T(z_1)-T(z_2),z_1-z_2\rangle=0,  \;\;\forall z_1, z_2\in\Re^2.
\end{eqnarray}
Thus, $T^{-1}$ is Lipschitz continuous at $0$ with modulus $a>0$ while $T$ is not strongly monotone. Thus, this example shows that the assumption ``$T^{-1}$ is Lipschitz continuity at 0" is weaker than the strong monotonicity assumption on $T$ as assumed in \cite{CYuan,DYin2,Gabay83}. 

\subsection{Some Known Results}
Then, we summarize some known results that are relevant to our analysis.
The following lemma summarizes some well-known properties of a firmly nonexpansive operator. The proof is
straightforward and thus omitted, or see, e.g, \cite{EcBe}.
\begin{lemma}
\label{contr_pro}
We have the following facts.
\begin{itemize}
\item [i)] All firmly nonexpansive operators are nonexpansive.
\item [ii)] An operator $T$ is firmly nonexpansive if and only if $2T-I$ is nonexpansive.
\item [iii)] An operator is firmly nonexpansive if and only if it is of the form $\frac{1}{2}(C+I)$,
where $C$ is nonexpansive.
\item [iv)] An operator $T$ is firmly nonexpansive if and only if $I-T$ is firmly nonexpansive.
\end{itemize}
\end{lemma}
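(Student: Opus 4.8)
The plan is to reduce every statement to a single scalar inequality in two difference vectors and then read off all four parts from one algebraic identity together with the Cauchy--Schwarz inequality. Throughout, fix $z,z'\in H$ and selections $w\in T(z)$, $w'\in T(z')$, and abbreviate $u:=z'-z$ and $v:=w'-w$. With this notation, Definition \ref{firm-nonexp} says $T$ is firmly nonexpansive exactly when $\|v\|^2\le\langle u,v\rangle$ holds for all such choices, while Definition \ref{nonexp} says $T$ is nonexpansive exactly when $\|v\|\le\|u\|$ holds. The one computation I would record at the outset is the polarization-type identity
\begin{eqnarray}
\label{polar}
\|2v-u\|^2=\|u\|^2+4\bigl(\|v\|^2-\langle u,v\rangle\bigr),
\end{eqnarray}
which will carry most of the work.

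For part i), I would simply chain $\|v\|^2\le\langle u,v\rangle\le\|u\|\,\|v\|$, the first step being firm nonexpansiveness and the second Cauchy--Schwarz; dividing by $\|v\|$ when $\|v\|\neq 0$ (and noting the conclusion is trivial otherwise) yields $\|v\|\le\|u\|$. For part ii), I would observe that an element of $(2T-I)(z)$ is precisely $2w-z$ for $w\in T(z)$, so the difference of two such elements is $2v-u$. Then $2T-I$ is nonexpansive iff $\|2v-u\|^2\le\|u\|^2$ for all selections, and by the identity (\ref{polar}) this is equivalent to $\|v\|^2-\langle u,v\rangle\le 0$, i.e.\ to firm nonexpansiveness of $T$.

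Part iii) I would treat as an immediate corollary of ii): solving $T=\frac12(C+I)$ for $C$ gives $C=2T-I$, so ``$T$ is of the stated form with $C$ nonexpansive'' is literally the statement that $2T-I$ is nonexpansive, which ii) identifies with firm nonexpansiveness of $T$. For part iv), I would repeat the same device with $R:=I-T$, whose elements at $z$ are $z-w$; the relevant difference is $u-v$, and expanding gives $\|u-v\|^2-\langle u,u-v\rangle=\|v\|^2-\langle u,v\rangle$. Hence $R$ is firmly nonexpansive iff $\|v\|^2\le\langle u,v\rangle$, i.e.\ iff $T$ is; the reverse implication needs no separate argument since $I-(I-T)=T$.

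The arguments are elementary, so the only point requiring care---and the closest thing to an obstacle---is the bookkeeping for the set-valued setting: each equivalence must quantify over \emph{all} selections $w\in T(z)$, $w'\in T(z')$, and I would check that the correspondences $w\mapsto 2w-z$ and $w\mapsto z-w$ are bijections between the selections of $T$ and those of $2T-I$ and $I-T$ respectively, so that the universally quantified inequalities transfer faithfully in both directions. Once this is noted, each ``iff'' follows because the identity (\ref{polar}) and its analogue for $\|u-v\|^2$ are genuine equalities, making every implication reversible.
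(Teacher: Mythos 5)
Your proof is correct: the polarization identity $\|2v-u\|^2=\|u\|^2+4(\|v\|^2-\langle u,v\rangle)$ together with Cauchy--Schwarz does establish all four parts, and your care with the set-valued bookkeeping (the bijections $w\mapsto 2w-z$ and $w\mapsto z-w$ between selections) is exactly what makes the equivalences transfer. The paper itself omits the proof as straightforward, deferring to the reference of Eckstein and Bertsekas, and your argument is precisely the standard one given there, so there is nothing to reconcile.
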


\medskip
In the following lemma, we show some simple conclusions for the resolvent operator of a maximal monotone operator.

\begin{lemma}
\label{proxity}
Let $T:H\rightarrow 2^H$ be set-valued and maximal monotone; $J_{cT}$ be defined in (\ref{resolvent}), and $c>0$ be a scalar. Then, we have
\begin{itemize}
\item [i)] $\langle J_{cT}(z) - J_{cT}( z'), (I-J_{cT})(z)-(I-J_{cT})(z') \rangle \ge0,\;\forall z, z' \in H$.
\item [ii)] $\|z-z'\|^2 \ge\|J_{cT}(z)-J_{cT}(z')\|^2 +
\|(I-J_{cT})(z)-(I-J_{cT})(z')\|^2,\;\forall z, z' \in H$.
\end{itemize}
\end{lemma}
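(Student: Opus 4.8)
The plan is to derive both statements directly from the defining relation of the resolvent together with the monotonicity of $T$. The key observation is the characterization: for any $z\in H$, the point $u:=J_{cT}(z)=(I+cT)^{-1}(z)$ is the \emph{unique} point satisfying $z\in u+cT(u)$, equivalently $\tfrac{1}{c}(z-u)\in T(u)$; single-valuedness of $J_{cT}$ is already recorded in the excerpt. I would fix $z,z'\in H$ and set $u:=J_{cT}(z)$, $u':=J_{cT}(z')$, so that $\tfrac{1}{c}(z-u)\in T(u)$ and $\tfrac{1}{c}(z'-u')\in T(u')$.

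For part i), I would note that $(I-J_{cT})(z)-(I-J_{cT})(z')=(z-u)-(z'-u')$, so the inner product in question equals $\langle u-u',\,(z-u)-(z'-u')\rangle$. Applying the monotonicity inequality of $T$ to the pairs $\bigl(u,\tfrac{1}{c}(z-u)\bigr)$ and $\bigl(u',\tfrac{1}{c}(z'-u')\bigr)$ gives
\begin{equation*}
\Bigl\langle u-u',\,\tfrac{1}{c}(z-u)-\tfrac{1}{c}(z'-u')\Bigr\rangle\ge0 ,
\end{equation*}
and multiplying through by the positive scalar $c$ yields exactly the assertion of i). Equivalently, i) is just the statement that $J_{cT}$ is firmly nonexpansive in the single-valued sense $\|J_{cT}(z)-J_{cT}(z')\|^2\le\langle z-z',\,J_{cT}(z)-J_{cT}(z')\rangle$.

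For part ii), I would exploit the algebraic identity
\begin{equation*}
\bigl(J_{cT}(z)-J_{cT}(z')\bigr)+\bigl((I-J_{cT})(z)-(I-J_{cT})(z')\bigr)=z-z'.
\end{equation*}
Writing $a:=J_{cT}(z)-J_{cT}(z')$ and $b:=(I-J_{cT})(z)-(I-J_{cT})(z')$, this reads $a+b=z-z'$, so expanding $\|z-z'\|^2=\|a+b\|^2=\|a\|^2+2\langle a,b\rangle+\|b\|^2$ and discarding the cross term, which is nonnegative by part i), gives precisely ii).

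I do not anticipate a genuine obstacle here: both claims are routine once the resolvent characterization is in place. The only points requiring a little care are the passage from the inclusion $\tfrac{1}{c}(z-u)\in T(u)$ back to the defining identity $z\in u+cT(u)$, and the observation that multiplying by $c>0$ preserves the sign of the monotonicity inequality. The remainder of ii) is a one-line expansion of a squared norm, with the sign of the cross term supplied by i).
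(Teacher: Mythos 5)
Your proof is correct and follows essentially the same route as the paper: show the cross term in i) is nonnegative (i.e., the firm nonexpansiveness of the resolvent), then obtain ii) by writing $z-z'$ as the sum of the two differences, expanding the squared norm, and discarding the nonnegative cross term. Your version is in fact more complete than the paper's two-line proof, which merely asserts that ``$J_{cT}$ is nonexpansive'' implies i) --- strictly speaking firm nonexpansiveness is what is needed there, and that is precisely what your monotonicity argument via $\tfrac{1}{c}(z-u)\in T(u)$ supplies.
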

\noindent
\begin{proof}
Obviously, $J_{cT}$ defined in (\ref{resolvent}) is nonexpansive, and it implies the first property immediately. The second property is trivial because of Property (i). $\hfill$
\end{proof}

Last, let us recall the representation lemma, see, e.g., \cite{EcBe}.
\begin{lemma}
\label{repl} (The representation lemma)
Let $c>0$ and let $T$ be monotone on $H$. Then every element $z$ of $H$ can be written in at most one way as
$x+cy$, where $y\in T(x)$. If $T$ is maximal, then every element $z$ of $H$ can be written in exactly one way as $x+cy$, where
$y\in T(x)$.
\end{lemma}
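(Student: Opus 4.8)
The plan is to separate the two assertions: the ``at most one way'' (uniqueness) part, which needs only monotonicity, and the ``exactly one way'' part, which additionally needs maximality. Phrased operator-theoretically, the lemma says that the map $(x,y)\mapsto x+cy$ restricted to the graph of $T$ is injective when $T$ is monotone, and bijective onto $H$ when $T$ is maximal; equivalently, that $I+cT$ is injective in the monotone case and surjective in the maximal case, so that $J_{cT}=(I+cT)^{-1}$ is a well-defined single-valued operator on all of $H$. I would prove these two facts in that order.

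For uniqueness, I would suppose $z=x_1+cy_1=x_2+cy_2$ with $y_1\in T(x_1)$ and $y_2\in T(x_2)$. Subtracting gives $x_1-x_2=-c(y_1-y_2)$, and pairing this identity with $y_1-y_2$ while invoking monotonicity yields $0\le\langle x_1-x_2,y_1-y_2\rangle=-c\|y_1-y_2\|^2$. Since $c>0$, this forces $y_1=y_2$, whence $x_1=x_2$. This settles the ``at most one way'' claim for any monotone $T$ and shows $J_{cT}$ is single-valued wherever it is defined.

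For existence under maximality, the task reduces to showing $\mathrm{ran}(I+cT)=H$, i.e. that for every $z$ there is $x$ with $(z-x)/c\in T(x)$; this is Minty's surjectivity theorem. After rescaling so that $c=1$, I would argue that $R:=\mathrm{ran}(I+T)$ equals $H$ by a connectedness argument: $R$ is nonempty (any point of the graph produces one element of $R$), and it is closed because, using the inequality $\|z_n-z_m\|^2\ge\|x_n-x_m\|^2+\|y_n-y_m\|^2$ that follows from monotonicity for $z_n=x_n+y_n$, a convergent sequence in $R$ forces $\{x_n\}$ and $\{y_n\}$ to be Cauchy, with the limit lying in the graph by the strong closedness of the graph of a maximal monotone operator. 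Combined with openness of $R$ and connectedness of $H$, this gives $R=H$, and uniqueness then upgrades ``at least one way'' to ``exactly one way''.

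The main obstacle is precisely the openness of $R$, equivalently the genuine existence step of Minty's theorem: near a point $z_0=x_0+y_0\in R$ one must solve $z-x\in T(x)$ for all nearby $z$, which calls for a Banach-type fixed-point construction built from the firmly nonexpansive resolvent on $R$. This surjectivity is the classical and nontrivial content of the lemma, whereas uniqueness is elementary. In keeping with the cited source \cite{EcBe}, I would either reproduce this contraction argument in full or invoke Minty's theorem directly; the remaining assembly into the stated form is then routine.
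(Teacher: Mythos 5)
Your proof is correct and matches the standard treatment: the paper itself states this lemma without proof, citing Eckstein--Bertsekas, where the argument is exactly yours --- uniqueness is the elementary consequence of monotonicity (pairing $x_1-x_2=-c(y_1-y_2)$ with $y_1-y_2$), and existence is precisely Minty's surjectivity theorem, i.e.\ $\mathrm{ran}(I+cT)=H$ for maximal monotone $T$. Invoking Minty's theorem for the existence half (rather than reproving its nontrivial openness/fixed-point step) is exactly what the cited source does, so no gap remains.
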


\section{Convergence of the Exact Version (\ref{RPPAscheme-ck})}
\label{linearCon}

In this section, we show that the generalized PPA (\ref{RPPAscheme-ck}) also converges linearly to a zero point of $T$
under the assumption ``$T^{-1}$ is Lipschitz continuous at $0$ with positive modulus", the same one as that in \cite{Rock76}.
For a lighter notation in analysis, we use the notation $\tilde z^k=J_{c_k T}(z^k)$ in the following analysis.

\subsection{Global Convergence}

\label{exactglocon}
First, we show the global convergence of (\ref{RPPAscheme-ck}). Note that we do not need the assumption ``$T^{-1}$ is Lipschitz continuous at $0$ with positive modulus" for proving the global convergence.
%[{\bf  I guess the proof may also be available in some literature, such as \cite{Gol79}; verify it.}]
%[{\bf you mean you did not find this reference or you found the reference but there is no proof? The latter.}]
The next theorem shows that the sequence $\{z^k\}$ generalized by (\ref{RPPAscheme-ck}) with $\gamma\in(0,2)$ is strictly contractive with respective to the solution set of (\ref{maxsol}), an important property ensuring its global convergence.

\begin{theorem}
\label{rel_PR global}
(Strict contraction) Let $\{z^k\}$ be the sequence generated by the exact version of the generalized PPA scheme (\ref{RPPAscheme-ck}) with $\gamma\in(0,2)$ and $\{c_k\}$ bounded away from 0; let $z^*$ be a solution point of (\ref{maxsol}). We have
\begin{eqnarray}
\label{cont_PR}
\|z^{k+1}-z^*\|^2\le \|z^k-z^*\|^2-\gamma(2-\gamma)\left\| z^k-\tilde z^k\right \|^2.
\end{eqnarray}
\end{theorem}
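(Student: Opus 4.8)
The plan is to expand the squared distance $\|z^{k+1}-z^*\|^2$ directly from the definition of the scheme and reduce everything to a single inner product that can be controlled by the firm nonexpansiveness of the resolvent. First I would observe that since $z^*$ solves $0\in T(z)$, it is a fixed point of every resolvent, i.e. $z^*=J_{c_kT}(z^*)$, so that $(I-J_{c_kT})(z^*)=0$. Writing the update as $z^{k+1}-z^*=(z^k-z^*)-\gamma(z^k-\tilde z^k)$ and expanding the norm gives
$$\|z^{k+1}-z^*\|^2=\|z^k-z^*\|^2-2\gamma\langle z^k-z^*,\,z^k-\tilde z^k\rangle+\gamma^2\|z^k-\tilde z^k\|^2.$$
The whole argument then hinges on showing that the cross term obeys $\langle z^k-z^*,\,z^k-\tilde z^k\rangle\ge\|z^k-\tilde z^k\|^2$.

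To establish this bound I would apply Lemma \ref{proxity}(i) to the two points $z^k$ and $z^*$. Because $J_{c_kT}(z^*)=z^*$, the term $(I-J_{c_kT})(z^*)$ vanishes, so the lemma reduces to $\langle \tilde z^k-z^*,\,z^k-\tilde z^k\rangle\ge0$. Decomposing $z^k-z^*=(z^k-\tilde z^k)+(\tilde z^k-z^*)$ and pairing with $z^k-\tilde z^k$ then yields exactly
$$\langle z^k-z^*,\,z^k-\tilde z^k\rangle=\|z^k-\tilde z^k\|^2+\langle \tilde z^k-z^*,\,z^k-\tilde z^k\rangle\ge\|z^k-\tilde z^k\|^2,$$
which is the desired inequality.

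Substituting this bound into the expansion replaces $-2\gamma\langle z^k-z^*,\,z^k-\tilde z^k\rangle$ by at most $-2\gamma\|z^k-\tilde z^k\|^2$, and combining with the $\gamma^2$ term collapses the coefficient to $-(2\gamma-\gamma^2)=-\gamma(2-\gamma)$, giving (\ref{cont_PR}). The restriction $\gamma\in(0,2)$ enters only at the very end, to guarantee that the coefficient $\gamma(2-\gamma)$ is positive so that (\ref{cont_PR}) is a genuine strict contraction. I would also note, consistent with the remark preceding the theorem, that neither the Lipschitz-continuity-at-$0$ hypothesis on $T^{-1}$ nor the lower bound on $\{c_k\}$ is used in this single-step estimate; only firm nonexpansiveness of $J_{c_kT}$ is needed here, with those extra assumptions reserved for the later summability argument that upgrades (\ref{cont_PR}) to actual convergence. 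The only genuinely substantive step is recognizing how to invoke Lemma \ref{proxity}(i) so that the fixed-point identity $z^*=J_{c_kT}(z^*)$ eliminates one inner-product term; the remainder is routine algebra.
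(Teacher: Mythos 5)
Your proposal is correct and follows essentially the same route as the paper: both expand $\|z^{k+1}-z^*\|^2$ via the update rule and reduce the argument to the key bound $\langle z^k-z^*,\,z^k-\tilde z^k\rangle\ge\|z^k-\tilde z^k\|^2$, obtained from the resolvent inequality of Lemma \ref{proxity} together with the fixed-point identity $J_{c_kT}(z^*)=z^*$. The only cosmetic difference is that you invoke part (i) of Lemma \ref{proxity} directly, whereas the paper cites part (ii) and then extracts the same inner-product inequality (\ref{keytmp}); the substance is identical.
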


\begin{proof}
First, applying the property (ii) in Lemma \ref{proxity} with $z=z^k$ and $\tilde z=z^*$, we get
\begin{eqnarray}
\label{keycontr} \|z^k-z^*\|^2\ge\|\tilde z^k-z^*\|^2+\|z^k-\tilde z^k\|^2.
\end{eqnarray}
We thus have
\begin{eqnarray}
\label{keytmp}
\langle \tilde z^k-z^*, z^k-\tilde z^k\rangle \ge0,
\end{eqnarray}
%First, applying Property (i) in Lemma \ref{proxity} and notice the fact $z^*=J_{c_kT} z^*$ and $\tilde z^k=J_{c_k T}(z^k) $, we have
%\begin{eqnarray*}
%(\tilde z^k-z^*)^\top (z^k-\tilde z^k)\ge0,
%\end{eqnarray*}
and furthermore
\begin{eqnarray}
\label{tmp1PPA}
\langle z^k-z^*, z^k-\tilde z^k \rangle \ge\|z^k-\tilde z^k\|^2.
\end{eqnarray}
Consequently, we have
\begin{eqnarray*}
%\label{zz}
\|z^{k+1}-z^*\|^2&=&\|z^k-\gamma (z^k-\tilde z^k) - z^*\|^2\nn\\
                 &=&\|z^k-z^*\|^2-2\gamma \langle z^k-z^*, z^k-\tilde z^k \rangle +\gamma^2\|z^k -\tilde z^k\|^2\nn\\
                 &\le& \|z^k - z^*\|^2-\gamma(2-\gamma)\|z^k-\tilde z^k\|^2,
\end{eqnarray*}
where the inequality follows from (\ref{tmp1PPA}). Thus, the assertion (\ref{cont_PR}) is proved. $\hfill$
\end{proof}
\vspace{0.5cm}

Based on Theorem \ref{rel_PR global}, the convergence of (\ref{RPPAscheme-ck}) can be easily established.

\begin{theorem}
\label{thm-conv}
(Global convergence) Let $\{z^k\}$ be the sequence generated by the exact version of the generalized PPA scheme (\ref{RPPAscheme-ck}) with $\gamma\in(0,2)$ and $\{c_k\}$ bounded away from 0. Then it globally converges to a solution point of (\ref{maxsol}).
\end{theorem}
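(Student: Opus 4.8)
The plan is to run the classical Fej\'er-monotonicity argument, using Theorem \ref{rel_PR global} as the driving estimate. Since $\gamma\in(0,2)$ the factor $\gamma(2-\gamma)$ is strictly positive, so (\ref{cont_PR}) shows that for any fixed solution point $z^*$ the sequence $\{\|z^k-z^*\|\}$ is monotonically nonincreasing; being bounded below by zero, it converges, and in particular $\{z^k\}$ is bounded. Summing (\ref{cont_PR}) over $k=0,1,\dots$ and telescoping then gives
\[
\gamma(2-\gamma)\sum_{k=0}^{\infty}\|z^k-\tilde z^k\|^2\le \|z^0-z^*\|^2<\infty,
\]
so that $\|z^k-\tilde z^k\|\to 0$ as $k\to\infty$.

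Next I would extract a weak cluster point and show it solves (\ref{maxsol}). As $\{z^k\}$ is bounded in the Hilbert space $H$, some subsequence $\{z^{k_j}\}$ converges weakly to a point $z^\infty$; since $z^k-\tilde z^k\to 0$ strongly, we also have $\tilde z^{k_j}\rightharpoonup z^\infty$. By the definition (\ref{resolvent}) of the resolvent, $w^k:=(z^k-\tilde z^k)/c_k\in T(\tilde z^k)$, and because $c_k\ge\kappa>0$ with $\|z^k-\tilde z^k\|\to 0$, we get $w^{k_j}\to 0$ strongly. The key step is to pass to the limit inside the graph of $T$: for every pair $(x,v)$ with $v\in T(x)$, monotonicity gives $\langle \tilde z^{k_j}-x,\,w^{k_j}-v\rangle\ge 0$, and letting $j\to\infty$ (using $\tilde z^{k_j}\rightharpoonup z^\infty$ and $w^{k_j}\to 0$) yields $\langle z^\infty-x,\,0-v\rangle\ge 0$ for all such $(x,v)$. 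Maximality of $T$ then forces $(z^\infty,0)$ into the graph, i.e.\ $0\in T(z^\infty)$, so $z^\infty\in zer(T)$.

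Finally I would upgrade this subsequential weak limit to convergence of the whole sequence. Having shown $z^\infty$ is a solution, I may set $z^*=z^\infty$ in (\ref{cont_PR}), so $\{\|z^k-z^\infty\|\}$ is itself convergent. Uniqueness of the weak cluster point follows from the standard identity: if $z^\infty_1$ and $z^\infty_2$ are two weak cluster points (both solutions by the previous step), then both $\{\|z^k-z^\infty_1\|\}$ and $\{\|z^k-z^\infty_2\|\}$ converge, and expanding
\[
\|z^k-z^\infty_1\|^2-\|z^k-z^\infty_2\|^2
= 2\langle z^k,\,z^\infty_2-z^\infty_1\rangle+\|z^\infty_1\|^2-\|z^\infty_2\|^2
\]
shows $\langle z^k,\,z^\infty_2-z^\infty_1\rangle$ converges; evaluating this limit along the two subsequences gives $\|z^\infty_1-z^\infty_2\|^2=0$. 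Hence the bounded sequence $\{z^k\}$ has a unique weak cluster point and therefore converges (in the weak topology) to a single solution point of (\ref{maxsol}).

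The main obstacle is the limit passage in the second paragraph, namely verifying that the weak cluster point genuinely lies in $zer(T)$. This is precisely where maximal monotonicity, in the form of demiclosedness of the graph under weak--strong convergence, must be invoked. One must also check carefully that the pairing $\langle \tilde z^{k_j},w^{k_j}\rangle$ behaves correctly even though $\{\tilde z^{k_j}\}$ converges only weakly; this succeeds exactly because $\{w^{k_j}\}$ converges \emph{strongly} to $0$, which lets the cross terms vanish and the remaining terms pass to the limit by weak convergence.
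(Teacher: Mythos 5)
Your proposal is correct and follows the same overall strategy as the paper: Fej\'er monotonicity from Theorem \ref{rel_PR global}, telescoping to get $\|z^k-\tilde z^k\|\to 0$, passing to the limit inside the graph of $T$ and invoking maximality, then uniqueness of the cluster point. The one substantive difference is in how the two arguments treat the topology of $H$. The paper extracts an accumulation point $z^\infty$ and writes $\lim_{j\to\infty}\|z^{k_j}-z^\infty\|=0$, i.e.\ it implicitly assumes the bounded sequence has a \emph{strongly} convergent subsequence; this is automatic in finite dimensions but not in a general Hilbert space, and the paper's uniqueness step (``cannot have more than one accumulation point'') is likewise left as an assertion. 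You instead work with weakly convergent subsequences, exploit the weak--strong pairing $\langle \tilde z^{k_j}-x,\,w^{k_j}-v\rangle \to \langle z^\infty - x,\,-v\rangle$ (valid because $w^{k_j}\to 0$ strongly while $\tilde z^{k_j}\rightharpoonup z^\infty$), and close the argument with the standard Opial-type identity to show the weak cluster point is unique. Your version is therefore rigorous in arbitrary Hilbert spaces and matches what Rockafellar actually proves for PPA, at the cost of concluding only \emph{weak} convergence of $\{z^k\}$; the paper's proof, read literally, claims convergence in norm but is only justified when strong subsequential limits exist (e.g.\ in finite dimensions). Both arguments are sound within their respective settings, and yours repairs the only loose point in the paper's.
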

\begin{proof}
According to (\ref{cont_PR}), the sequence $\{z^k\}$ is bounded, and it has at least one accumulation point, say $z^{\infty}$. Let $\{z^{k_j}\}$ be the subsequence converging to $z^{\infty}$. Recall the notation $\tilde z^k=J_{c_k T}(z^k)$ and the definition of $J_{c_k T}$ in (\ref{resolvent}). We thus have
%From the inequality (\ref{cont_PR}), we know that the sequence $\{z^k\}$ is strictly contractive and thus it indeed converges to $z^*$.
$$c_{k_j}^{-1}(z^{k_j}-{\tilde z}^{k_j})\in T({\tilde z}^{k_j}).$$
Then, using the monotonicity of $T$, for an integer $k_j$, it holds that
\begin{eqnarray}\label{insert1}
\langle z-{\tilde z}^{k_j}, w- c_{k_j}^{-1}(z^{k_j}-{\tilde z}^{k_j})\rangle\ge0,\;\; \hbox{for all} \; z, w \;\hbox{satisfying}\;\; w \in T(z).
\end{eqnarray}
Again, it follows from (\ref{cont_PR}) that $\lim_{j\rightarrow \infty}\|z^{k_j}-\tilde z^{k_j}\|=0$. Combining this fact with $\lim_{j\rightarrow \infty}\|z^{k_j}-z^{\infty}\|=0$,
we get
$\lim_{j\rightarrow \infty}\|{\tilde z}^{k_j}-z^{\infty}\|=0$.
Recall $\{c_k\}$ is bounded away from 0. Then, taking $j\rightarrow\infty$ in (\ref{insert1}), we obtain
$$\langle z-z^{\infty}, w \rangle \ge0 \;\; \hbox{for all} \; z, w \;\hbox{satisfying}\;\; w \in T(z).
$$
In view of the maximality of $T$, this inequality implies that $z^{\infty}$ is a solution point of (\ref{maxsol}), see, e.g. \cite{Rock76}. It is easy to see from Theorem \ref{rel_PR global} that the sequence $\{z^k\}$ cannot have more than one accumulation point. Thus, $\{z^k\}$ converges to $z^{\infty}$ which a solution point of (\ref{maxsol}). The proof is complete. $\hfill$
\end{proof}

\subsection{Linear Convergence}\label{linear-exact}
\label{exactlincon}
Now, under the assumption ``$T^{-1}$ is Lipschitz continuous at $0$ with positive modulus", we prove the linear convergence of (\ref{RPPAscheme-ck}). First, two lemmas are presented.

\begin{lemma}
\label{strongm}
Let $T: H \to 2^H$ be maximal monotone and $z^*$ be a solution point of (\ref{maxsol}); let $c_k>0$. If $T^{-1}$ is Lipschitz continuous at $0$ with modulus $a>0$, then there exists a positive $\tau$ such that
\begin{equation}\label{new-1}
\|J_{c_kT}(z)-z^*\|\le \frac{a}{\sqrt{a^2+c_k^2}}\|z-z^*\|\;\; \mbox{when}\;\|c_k^{-1}(z-J_{c_kT}(z))\|\le \tau,\;\forall z\in H.
\end{equation}
\end{lemma}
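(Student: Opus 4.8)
The plan is to characterize the resolvent point through the graph of $T^{-1}$, apply the Lipschitz estimate at $0$ to the residual, and then combine this with the Pythagorean-type inequality of Lemma \ref{proxity}.

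First I would set $y:=J_{c_kT}(z)$ and unpack the definition of the resolvent in (\ref{resolvent}). Since $y=(I+c_kT)^{-1}(z)$, we have $z-y\in c_kT(y)$, equivalently $c_k^{-1}(z-y)\in T(y)$, i.e. $y\in T^{-1}\!\left(c_k^{-1}(z-y)\right)$. Writing $u:=c_k^{-1}(z-y)$, the side condition of the lemma is precisely $\|u\|\le\tau$, so the Lipschitz continuity of $T^{-1}$ at $0$ (Definition \ref{invLipze}), applied to the pair $y\in T^{-1}(u)$, yields
$$\|y-z^*\|\le a\|u\|=\frac{a}{c_k}\|z-y\|.$$

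Next I would record the elementary but essential observation that $z^*$ is a fixed point of the resolvent: because $0\in T(z^*)$, we get $z^*=J_{c_kT}(z^*)$, hence $(I-J_{c_kT})(z^*)=0$. Applying property (ii) of Lemma \ref{proxity} to the pair $(z,z^*)$ and using this fact to kill the second resolvent term collapses the inequality to the Pythagorean form
$$\|z-z^*\|^2\ge\|y-z^*\|^2+\|z-y\|^2.$$

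Finally I would merge the two displays. From the Lipschitz estimate we have $\|z-y\|\ge(c_k/a)\|y-z^*\|$; substituting this lower bound into the Pythagorean inequality gives
$$\|z-z^*\|^2\ge\|y-z^*\|^2\left(1+\frac{c_k^2}{a^2}\right)=\frac{a^2+c_k^2}{a^2}\|y-z^*\|^2,$$
and taking square roots produces the claimed bound $\|y-z^*\|\le\frac{a}{\sqrt{a^2+c_k^2}}\|z-z^*\|$. The argument is short, and the only step requiring genuine care is the first: one must recognize that the hypothesis on $T^{-1}$ is to be invoked at the resolvent point $y$ with argument $u=c_k^{-1}(z-y)$, and verify that the stated condition $\|c_k^{-1}(z-J_{c_kT}(z))\|\le\tau$ is exactly the admissibility requirement $\|u\|\le\tau$ of Definition \ref{invLipze}. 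Everything after that reduces to routine algebra.
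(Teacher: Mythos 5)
Your proposal is correct and follows essentially the same route as the paper's own proof: both combine the resolvent inclusion $c_k^{-1}(z-J_{c_kT}(z))\in T(J_{c_kT}(z))$ with the Lipschitz continuity of $T^{-1}$ at $0$ (Definition \ref{invLipze}) and the Pythagorean-type inequality from property (ii) of Lemma \ref{proxity}, then merge the two estimates. The only cosmetic difference is the order of the steps and your explicit remark that $z^*=J_{c_kT}(z^*)$, which the paper uses implicitly when stating (\ref{tmpLem}).
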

\begin{proof}
Applying Property (ii) in Lemma \ref{proxity} with $\tilde z=z^*$ and $c=c_k$, we get
\begin{eqnarray}
\label{tmpLem}
\|z-z^*\|^2\ge\|J_{c_kT}(z)-z^*\|^2+\|(I-J_{c_kT})(z)\|^2.
\end{eqnarray}
Recall the definition of $J_{c_k T}$ in (\ref{resolvent}). We have
$$
c_k^{-1}(I-J_{c_kT})(z)\in T(J_{c_kT}(z)).
$$
Since $T^{-1}$ is Lipschitz continuous at $0$ with modulus $a>0$, it follows from Definition \ref{invLipze} that there exists a positive parameter
$\tau$ such that
$$\|J_{c_kT}(z)-z^*\|\le a \left\|c_k^{-1}(I-J_{c_kT})(z)\right\|\;\;\mbox{when}\; \|c_k^{-1} (I-J_{c_kT})(z)\|\le \tau.$$
Substituting this inequality into (\ref{tmpLem}), we obtain (\ref{new-1}). The proof is complete. $\hfill$
\end{proof}
\vspace{0.5cm}
\medskip

\begin{remark}
If some stronger assumptions such as ``$T$ is $\frac{1}{a}$-strongly monotone" hold as some existing work \cite{CYuan,Gabay83}, the assertion (\ref{new-1}) can be easily improved as
\begin{equation}\label{new-1s}
\|J_{c_kT}(z)-z^*\|\le \frac{a}{a+c_k}\|z-z^*\|\;\;\forall z\in H.
\end{equation}
 Under the weaker assumption ``$T^{-1}$ is Lipschitz continuous at $0$ with positive modulus", however, the assertion (\ref{new-1}) is optimal in the sense that the coefficient in the right-hand side cannot be smaller. To see this, let us consider the example (\ref{opeT}) again in Section \ref{Sec-Pre-Example}. It follows from (\ref{fle}) that
\begin{eqnarray}\label{flesol}
\langle c_k T(J_{c_kT}(z))-c_k T(J_{c_kT}(z^*)),J_{c_kT}(z)-z^*\rangle=0.
\end{eqnarray}
Consequently, we have
\begin{eqnarray}
\label{tmpLemst}
&&\|z-z^*\|^2=\|J_{c_kT}(z)-z^*\|^2+\|z -J_{c_kT}(z)\|^2+2\langle z- J_{c_kT}(z), J_{c_kT}(z)-z^*\rangle\nonumber\\
&&=\|J_{c_kT}(z)-z^*\|^2+ \|c_k T(J_{c_kT}(z))\|^2+2\langle c_k T(J_{c_kT}(z)),J_{c_kT}(z)-z^*\rangle\nonumber\\
&&=\|J_{c_kT}(z)-z^*\|^2+ \|c_k T(J_{c_kT}(z))\|^2+2\langle c_k T(J_{c_kT}(z))-c_k T(J_{c_kT}(z^*)),J_{c_kT}(z)-z^*\rangle\nonumber\\
%&&=\|J_{c_kT}(z)-z^*\|^2+ \|c_k T(J_{c_kT}(z))-c_k T(z^*)\|^2+2\langle c_k T(J_{c_kT}(z))-c_k T(z^*),J_{c_kT}(z)-z^*\rangle\nonumber\\
&&=(1+\frac{c_k^2}{a^2})\|J_{c_kT}(z)-z^*\|^2,
\end{eqnarray}
in which the last inequality is because of the identity
$$\|c_k T(J_{c_kT}(z))-c_k T(J_{c_kT}(z^*))\|^2=\frac{c_k^2}{a^2}\|J_{c_kT}(z)-J_{c_kT}(z^*) \|^2$$
and the assertion (\ref{flesol}). Therefore, the inequality (\ref{new-1}) is tight and this indeed implies that the linear convergence rate to be established for (\ref{RPPAscheme-ck}) is optimal.

\end{remark}

\begin{lemma}
\label{neig}
Let $\{z^k\}$ be the sequence generated by the exact version of the generalized PPA scheme (\ref{RPPAscheme-ck}) with $\gamma\in (0,2)$, and $z^*$ be a solution point of (\ref{maxsol}). If $T^{-1}$ is Lipschitz continuous at $0$ with modulus $a>0$, and the proximal parameter sequence $\{c_k\}$ is bounded away from zero ($c_k\ge \kappa>0$ for any $k$), then there exists an integer $\hat k$ such that
\begin{equation}\label{new-3}
 \|\tilde z^k-z^*\|\le\frac{a}{\sqrt{a^2+c_k^2}}\|z^k-z^*\|\;\;\forall k>\hat k.
 \end{equation}
\end{lemma}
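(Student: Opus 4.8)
The plan is to derive (\ref{new-3}) directly from Lemma \ref{strongm} by checking that the conditional hypothesis of that lemma is eventually active along the iterates. Setting $z=z^k$ in (\ref{new-1}), its conclusion reads $\|\tilde z^k-z^*\|\le\frac{a}{\sqrt{a^2+c_k^2}}\|z^k-z^*\|$, which is precisely (\ref{new-3}); so all that remains is to produce an integer $\hat k$ beyond which the switch-on condition $\|c_k^{-1}(z^k-\tilde z^k)\|\le\tau$ holds, where $\tau>0$ is the threshold furnished by Lemma \ref{strongm}.

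First I would exploit the residual decay that is already implicit in the global convergence analysis. Summing the strict contraction inequality (\ref{cont_PR}) from $k=0$ to $N$ telescopes to
$$\gamma(2-\gamma)\sum_{k=0}^{N}\|z^k-\tilde z^k\|^2\le\|z^0-z^*\|^2,$$
and since $\gamma(2-\gamma)>0$ for $\gamma\in(0,2)$, letting $N\to\infty$ shows the series converges and hence $\|z^k-\tilde z^k\|\to0$. (This residual-to-zero fact was in effect already used inside the proof of Theorem \ref{thm-conv}.) Note that uniqueness of $z^*$, guaranteed by the Lipschitz continuity of $T^{-1}$ at $0$, is what lets us identify the limit point from Theorem \ref{thm-conv} with the $z^*$ appearing in the statement.

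Next I would bring in the uniform lower bound $c_k\ge\kappa>0$. Because $c_k^{-1}\le\kappa^{-1}$, we have
$$\|c_k^{-1}(z^k-\tilde z^k)\|=c_k^{-1}\|z^k-\tilde z^k\|\le\kappa^{-1}\|z^k-\tilde z^k\|\longrightarrow0.$$
Therefore there exists an integer $\hat k$ such that $\|c_k^{-1}(z^k-\tilde z^k)\|\le\tau$ for every $k>\hat k$, and for each such $k$ the conditional bound (\ref{new-1}) applies with $z=z^k$, yielding (\ref{new-3}).

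There is no genuine obstacle here: the analytic content is concentrated in Lemma \ref{strongm}, and this lemma merely feeds its iterates into it. The only point that requires care is that (\ref{new-1}) is a \emph{local} estimate, active only once the residual $c_k^{-1}(z^k-\tilde z^k)$ has entered the $\tau$-ball; this is exactly why (\ref{new-3}) is asserted for $k>\hat k$ rather than for all $k$, and it is the uniform bound $c_k\ge\kappa$ (rather than mere positivity of each $c_k$) that ensures such a $\hat k$ exists regardless of how the proximal parameters might otherwise behave.
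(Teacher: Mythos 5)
Your proof is correct and follows essentially the same route as the paper: invoke Lemma \ref{strongm} with $z=z^k$, then use the residual decay $\|z^k-\tilde z^k\|\to 0$ (which the paper cites from Theorem \ref{rel_PR global} and you obtain by telescoping (\ref{cont_PR})) together with $c_k\ge\kappa>0$ to activate the threshold condition $\|c_k^{-1}(z^k-\tilde z^k)\|\le\tau$ for all $k>\hat k$. The only difference is that you spell out the telescoping argument the paper leaves implicit.
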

\begin{proof}
Applying Lemma \ref{strongm} with $z=z^k$, we know there exists $\tau>0$ such that
$$\|J_{c_kT}(z^k)-z^*\|\le \frac{a}{\sqrt{a^2+c_k^2}}\|z^k-z^*\|\;\; \mbox{when}\;\|c_k^{-1}(z^k-J_{c_kT}(z^k))\|\le \tau.$$
Using the notation $J_{c_kT}(z^k)=\tilde z^k$, this inequality can be rewritten as
$$\|\tilde z^k-z^*\|\le \frac{a}{\sqrt{a^2+c_k^2}}\|z^k-z^*\|\;\; \mbox{when}\;\|c_k^{-1}(z^k-\tilde z^k)\|\le \tau.$$
It follows from Theorem \ref{rel_PR global} that $\lim_{k\to \infty}\|z^k-\tilde z^k\|=0$. Then,
there exists an integer $\hat k$ such that,
$$c_k^{-1}\|z^k-\tilde z^k\|\le \kappa^{-1}\|z^k-\tilde z^k\|\le \tau\;\mbox{when} \;k>\hat k.$$
Thus, the assertion (\ref{new-3}) is implied by the two inequalities above. The proof is complete. $\hfill$
\end{proof}
\vspace{0.5cm}

Now, we prove the linear convergence rate of (\ref{RPPAscheme-ck}) in the following theorem.
\begin{theorem}
\label{rel_PR}
If $T^{-1}$ is Lipschitz continuous at $0$ with modulus $a>0$ and the proximal parameter $\{c_k\}$ is bounded away from zero ($c_k\ge \kappa>0)$, then the sequence $\{z^k\}$ generated by the exact version of the generalized PPA scheme (\ref{RPPAscheme-ck}) with $\gamma\in(0,2)$ satisfies
\begin{eqnarray}
\label{contl}
\|z^{k+1}-z^*\|^2\le\varrho\|z^k-z^*\|^2,
\end{eqnarray}
with
\begin{eqnarray}
\label{varrho}
\varrho:=1-\min(\gamma,2\gamma-\gamma^2)\frac{c_k^2}{a^2+c_k^2} \in (0,1).
%\left\{\begin{array}{ll}
% 1-\gamma\frac{c_k^2}{a^2+c_k^2},& 0<\gamma\le 1; \\
% 1-(2\gamma-\gamma^2)\frac{c_k^2}{a^2+c_k^2},&1<\gamma<2.
% \end{array}
%\right.
\end{eqnarray}
That is, the sequence $\{z^k\}$  converges linearly to a solution point of (\ref{maxsol}).
\end{theorem}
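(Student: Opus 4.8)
The plan is to combine the exact expansion of $\|z^{k+1}-z^*\|^2$ that already appears inside the proof of Theorem~\ref{rel_PR global} with the one-step estimate of Lemma~\ref{neig}, the essential refinement being that I keep the nonnegative cross term rather than discarding it as the strict-contraction bound (\ref{cont_PR}) does. Throughout write $d^k:=z^k-\tilde z^k$ and $s_k:=\langle \tilde z^k-z^*,\,d^k\rangle$, which is nonnegative by (\ref{keytmp}).

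First I would redo the expansion of Theorem~\ref{rel_PR global} but split the inner product differently. Using $z^k-z^*=(\tilde z^k-z^*)+d^k$ one gets $\langle z^k-z^*,\,d^k\rangle=s_k+\|d^k\|^2$, so that
$$\|z^{k+1}-z^*\|^2=\|z^k-z^*\|^2-2\gamma\langle z^k-z^*,\,d^k\rangle+\gamma^2\|d^k\|^2=\|z^k-z^*\|^2-2\gamma s_k-(2\gamma-\gamma^2)\|d^k\|^2.$$
The same decomposition applied to $\|z^k-z^*\|^2=\|(\tilde z^k-z^*)+d^k\|^2$ gives $\|z^k-z^*\|^2=\|\tilde z^k-z^*\|^2+2s_k+\|d^k\|^2$, that is, $2s_k+\|d^k\|^2=\|z^k-z^*\|^2-\|\tilde z^k-z^*\|^2$.

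Next, for $k>\hat k$ Lemma~\ref{neig} gives $\|\tilde z^k-z^*\|^2\le\frac{a^2}{a^2+c_k^2}\|z^k-z^*\|^2$; substituting this into the preceding identity and using $1-\frac{a^2}{a^2+c_k^2}=\frac{c_k^2}{a^2+c_k^2}$ produces the lower bound $2s_k+\|d^k\|^2\ge\frac{c_k^2}{a^2+c_k^2}\|z^k-z^*\|^2$. The crux is then to lower-bound the weighted sum $2\gamma s_k+(2\gamma-\gamma^2)\|d^k\|^2$ that enters the first display with a minus sign. Since $s_k\ge0$ and $\|d^k\|^2\ge0$, replacing the coefficient $\gamma$ (of $2s_k$) and the coefficient $2\gamma-\gamma^2$ (of $\|d^k\|^2$) by their minimum only decreases the value, so that
$$2\gamma s_k+(2\gamma-\gamma^2)\|d^k\|^2\ge\min(\gamma,\,2\gamma-\gamma^2)\bigl(2s_k+\|d^k\|^2\bigr)\ge\min(\gamma,\,2\gamma-\gamma^2)\,\frac{c_k^2}{a^2+c_k^2}\,\|z^k-z^*\|^2.$$
Inserting this into the identity for $\|z^{k+1}-z^*\|^2$ yields exactly (\ref{contl}) with the constant $\varrho$ of (\ref{varrho}). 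Finally $\varrho\in(0,1)$ since $\gamma\in(0,2)$ keeps both $\gamma$ and $2\gamma-\gamma^2$ strictly positive (so the subtracted term is positive and $\varrho<1$), while $\min(\gamma,2\gamma-\gamma^2)\le1$ together with $\frac{c_k^2}{a^2+c_k^2}<1$ keeps the subtracted term strictly below $1$ (so $\varrho>0$).

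I expect the main obstacle to be conceptual rather than computational: the tempting route of invoking the strict-contraction inequality (\ref{cont_PR}) directly and then estimating $\|d^k\|$ from below by the triangle inequality $\|d^k\|\ge\|z^k-z^*\|-\|\tilde z^k-z^*\|$ does \emph{not} reproduce the stated rate; squaring that triangle inequality discards the cross term and leaves a different constant, $1-\gamma(2-\gamma)\bigl(1-\frac{a}{\sqrt{a^2+c_k^2}}\bigr)^2$. Retaining $s_k$ and coupling the two nonnegative pieces through the single identity $2s_k+\|d^k\|^2=\|z^k-z^*\|^2-\|\tilde z^k-z^*\|^2$ is precisely what turns the tight ratio of Lemma~\ref{neig} into the advertised $\varrho$; in the rotation example of Section~\ref{Sec-Pre-Example} one has $s_k\equiv0$ by (\ref{flesol}) and Lemma~\ref{neig} holds with equality, which is the source of the optimality asserted in the Remark after Lemma~\ref{strongm}. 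Since $c_k\ge\kappa>0$, one moreover obtains the uniform bound $\varrho\le1-\min(\gamma,2\gamma-\gamma^2)\frac{\kappa^2}{a^2+\kappa^2}<1$, so the convergence is genuinely geometric.
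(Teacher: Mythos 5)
Your proof is correct and is essentially the paper's own argument: both rest on the expansion of $\|z^{k+1}-z^*\|^2$, the nonnegativity of the cross term (\ref{keytmp}), and the key estimate of Lemma~\ref{neig}, and both arrive at exactly the constant $\varrho$ of (\ref{varrho}) with the same uniform bound $1-\min(\gamma,2\gamma-\gamma^2)\frac{\kappa^2}{a^2+\kappa^2}<1$. The only difference is organizational: where the paper splits into the cases $\gamma=1$, $0<\gamma<1$, and $1<\gamma<2$ (using convexity of $\|\cdot\|^2$ in the second and dropping the signed cross term in the third), your single inequality $2\gamma s_k+(2\gamma-\gamma^2)\|d^k\|^2\ge\min(\gamma,2\gamma-\gamma^2)\bigl(2s_k+\|d^k\|^2\bigr)$ handles all cases at once, discarding exactly the same slack terms ($\gamma(1-\gamma)\|d^k\|^2$, respectively $2\gamma(\gamma-1)s_k$) that the paper discards case by case.
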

\begin{proof}
Simple algebra shows that
\begin{eqnarray*}
\|z^{k+1}-z^*\|^2&=&(1-\gamma)^2\|z^k-z^*\|^2+\gamma^2\|\tilde z^k-z^*\|^2+2\gamma(1-\gamma)\langle \tilde z^k-z^*, z^k-z^*\rangle\nonumber\\
                 &=&(1-\gamma)^2\|z^k-z^*\|^2+(2\gamma-\gamma^2)\|\tilde z^k-z^*\|^2+2\gamma(1-\gamma)\langle \tilde z^k-z^*,z^k-\tilde z^k\rangle.
\end{eqnarray*}
Obviously, the assertion (\ref{contl})-(\ref{varrho}) follows directly from Lemma \ref{neig} when $\gamma=1$. If $0<\gamma<1$, then it follows from Lemma \ref{neig} that
$$
\|z^{k+1}-z^*\|^2 \le (1-\gamma)\|z^k-z^*\|^2+\gamma\|\tilde z^k-z^*\|^2 =(1-\gamma\frac{c_k^2}{a^2+c_k^2})\|z^k-z^*\|^2.
$$
Moreover, if $1<\gamma<2$, because of (\ref{keytmp}) and Lemma \ref{neig}, we have
\begin{eqnarray*}
\|z^{k+1}-z^*\|^2\le\left(1-(2\gamma-\gamma^2)\frac{c_k^2}{a^2+c_k^2}\right)\|z^k-z^*\|^2.
\end{eqnarray*}
To show (\ref{varrho}), notice that $\gamma \in (0,2)$ and $c_k \ge \kappa>0$ for any $k$, and thus we have
  $$0<1-\min(\gamma,2\gamma-\gamma^2)\le \varrho:=1-\min(\gamma,2\gamma-\gamma^2)\frac{c_k^2}{a^2+c_k^2}
   <1-\min(\gamma,2\gamma-\gamma^2)\frac{\kappa^2}{a^2+\kappa^2}<1.$$
Thus, the inequalities (\ref{contl}) and (\ref{varrho}) imply the linear convergence rate of the sequence $\{z^k\}$. The proof is complete. $\hfill$
\end{proof}
\vspace{0.5cm}

\begin{remark}
The proof of Theorem \ref{rel_PR} shows that because of the tightness of the inequality (\ref{new-1}), the inequality (\ref{contl}) cannot be improved in the sense that no constant smaller than $\varrho$ defined in (\ref{varrho}) can be found such that the inequality (\ref{contl}) still holds. Thus, the linear convergence of the PPA scheme (\ref{RPPAscheme-ck}) established in Theorem \ref{rel_PR} is optimal.
\end{remark}

\begin{remark}\label{relassuA}
Similarly as Definition \ref{invLipze}, if a sequence $\{z^k\}$ converges to $z^*$, we can define ``$T^{-1}$ is  Lipschitz continuous with modulus $a\ge0$ at the sequence $\{z^k\}$" if there exists some $\tau>0$ such that
 $$\|z^k-z^*\|\le a\|w^k\|\;\mbox{whenever}\; z^k\in T^{-1}(w^k)\;\mbox{and}\;\|w^k\|\le \tau.$$
Then, it can be easily seen that the linear convergence of $\{z^k\}$ generated by (\ref{RPPAscheme-ck}) can be guaranteed under the less strengthen assumption `` $T^{-1}$ is Lipschitz continuous at the iterates $\{\tilde z^k\}$ with positive modulus when $k$ is sufficiently large". Recall the fact  $\tilde z^k\in T^{-1}(c_k^{-1}(z^k-\tilde z^k))$ and $z^*\in T^{-1}(0)$. Then, this less strengthen assumption is equivalent to saying that there exists an integer $\hat k$ such that
\begin{eqnarray}
\label{relassumptA}
\|\tilde z^k-z^*\|\le a \|c_k^{-1}(z^k-\tilde z^k)\|\;\mbox{when}\;k>\hat k,
\end{eqnarray}
where $\hat k$ is large enough such that $\|c_k^{-1}(z^k-\tilde z^k)\| \le \tau$. Note that $\|c_k^{-1}(z^k-\tilde z^k)\| \le \tau$ can be ensured by the fact $\lim_{k \to \infty}\|z^k-{\tilde z}^k\|=0$ implied in (\ref{cont_PR}) and that $\{c_k\}$ is bounded away from $0$. More discussion is referred to Section \ref{relCon}.
\end{remark}

\section{The Convergence of the Inexact Version (\ref{RPPAscheme-ck-inexact})}
\label{inexPPA}

In this section, we specify the inexactness criterion for (\ref{RPPAscheme-ck-inexact}) and show its linear convergence under the same assumption of ``$T^{-1}$ is Lipschitz continuous at $0$ with positive modulus". This is a generalization of the inexact version (\ref{PPAscheme-inexact}) considered in \cite{Rock76}. More specifically, we consider the scheme
\begin{eqnarray}
\label{InexaGPPA}
\left\{\begin{array}{l}
z^{k+1}=(1-\gamma)z^k+\gamma \bar z^k,\\
\|\bar z^k-J_{c_kT}(z^k)\|\le \delta_k\|z^k-z^{k+1}\|,
\end{array}\right.
\end{eqnarray}
where $\gamma\in(0,2)$, $c_k \ge \kappa>0$ for any $k$, and $\{\delta_k\}$ is a sequence of positive real numbers satisfying $\sum_k \delta_k<+\infty$.

Note that in (\ref{InexaGPPA}), we consider using relative errors to control the accuracy in (\ref{RPPAscheme-ck-inexact}); thus it is different from the inexact version in \cite{CYuan} which uses absolute errors. We still use the notation $\tilde z^k=J_{c_kT}(z^k)$ in the upcoming analysis.

\subsection{Global Convergence}
\label{inexactglocon}
Again, we first show the global convergence for the sequence $\{z^k\}$ generated by (\ref{InexaGPPA}). Note that we do not need the assumption ``$T^{-1}$ is Lipschitz continuous at $0$ with positive modulus" for proving the global convergence. We first prove several lemmas for this purpose. Their proofs are elementary; but we still include them for completeness.

\begin{lemma}
\label{sumprod}
Let $\{\alpha_k\}$ be a positive sequence satisfying $\sum_{k=1}^{\infty} \alpha_k<+\infty$. Then, we have
$$
\prod_{k=1}^{\infty} (1+\alpha_k)<+\infty.
$$
\end{lemma}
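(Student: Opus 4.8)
The plan is to bound the finite partial products by an exponential and then invoke monotone convergence. The key elementary inequality I would use is $1+x\le e^x$, valid for every real $x$ (in particular for $x\ge 0$), which follows at once from the convexity of the exponential function or from its power series expansion.

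First I would apply this inequality factor-by-factor. Since $\alpha_k>0$, each factor satisfies $1+\alpha_k\le e^{\alpha_k}$, so for any finite $N$ one obtains
$$
\prod_{k=1}^{N}(1+\alpha_k)\le \prod_{k=1}^{N}e^{\alpha_k}=\exp\Big(\sum_{k=1}^{N}\alpha_k\Big)\le \exp\Big(\sum_{k=1}^{\infty}\alpha_k\Big)=:M<+\infty,
$$
where the finiteness of $M$ is exactly the hypothesis $\sum_{k}\alpha_k<+\infty$ together with the monotonicity of the exponential.

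Next I would observe that the sequence of partial products $P_N:=\prod_{k=1}^{N}(1+\alpha_k)$ is nondecreasing in $N$, because each newly appended factor satisfies $1+\alpha_{N+1}>1$. A nondecreasing sequence that is bounded above (here by $M$) converges to a finite limit, and this limit is by definition the value of the infinite product. Hence $\prod_{k=1}^{\infty}(1+\alpha_k)\le M<+\infty$, which is precisely the claim.

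There is essentially no serious obstacle here; the only point requiring a little care is that the upper bound $M$ be \emph{uniform} in $N$, which is guaranteed by the summability assumption and the fact that $\exp$ is increasing. An equivalent route would be to pass to logarithms and use $\log(1+x)\le x$ to get $\log P_N=\sum_{k=1}^{N}\log(1+\alpha_k)\le\sum_{k=1}^{N}\alpha_k\le M$, but the exponential bound above is the most direct and avoids invoking continuity of the logarithm.
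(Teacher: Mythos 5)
Your proof is correct and is essentially the same as the paper's: the bound $1+x\le e^{x}$ you use is just the exponentiated form of the paper's inequality $\log(1+x)\le x$, and both arguments reduce finiteness of the product to summability of $\{\alpha_k\}$. You are somewhat more explicit about the final monotone-convergence step (bounded nondecreasing partial products), which the paper leaves implicit, but the underlying idea is identical.
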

\begin{proof}
Obviously, it holds that $\log(1+x)\le x\; \; \mbox{when}\;\; 0<x<1$. Hence, we have
$$\sum_{k=1}^{\infty}\log (1+\alpha_k)\le \sum_{k=1}^{\infty} \alpha_k<+\infty,$$
which implies the assertion immediately. $\hfill$ \end{proof}

\begin{lemma}
\label{furprod}
Let $\{\delta_k\}$ be a positive sequence satisfying
$\sum_{k=1}^{\infty} \delta_k<+\infty$ and $\gamma>0$ be a constant. Then, we have
$$\prod_{k=1}^{\infty} \frac{1+\gamma\delta_k}{1-\gamma\delta_k}<+\infty.$$
\end{lemma}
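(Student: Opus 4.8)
The plan is to reduce this to Lemma \ref{sumprod} by bounding each factor of the product by $1$ plus a summable term. First I would observe that since $\sum_{k=1}^{\infty}\delta_k<+\infty$, the terms $\delta_k\to 0$, so there is an index $k_0$ such that $\gamma\delta_k\le\tfrac12$ for all $k\ge k_0$; in particular the denominators $1-\gamma\delta_k$ are positive (indeed $\ge\tfrac12$) for $k\ge k_0$, and the finitely many leading factors with $k<k_0$ contribute only a finite positive constant to the product. Hence it suffices to show that the tail product $\prod_{k\ge k_0}\frac{1+\gamma\delta_k}{1-\gamma\delta_k}$ is finite.

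For the tail I would rewrite each factor as $\frac{1+\gamma\delta_k}{1-\gamma\delta_k}=1+\frac{2\gamma\delta_k}{1-\gamma\delta_k}$ and use $1-\gamma\delta_k\ge\tfrac12$ to obtain the bound $\frac{1+\gamma\delta_k}{1-\gamma\delta_k}\le 1+4\gamma\delta_k$ for all $k\ge k_0$. Setting $\alpha_k:=4\gamma\delta_k$, the sequence $\{\alpha_k\}$ is positive and summable, since $\sum_k\alpha_k=4\gamma\sum_k\delta_k<+\infty$. Lemma \ref{sumprod} then yields $\prod_{k\ge k_0}(1+\alpha_k)\le\prod_{k\ge 1}(1+\alpha_k)<+\infty$, and because each tail factor is dominated by $1+\alpha_k$, the tail product is finite as well. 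Multiplying the finite leading factor by the finite tail gives the claimed finiteness of the whole product.

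The only point requiring care, though it is minor, is the positivity and uniform lower bound of the denominators $1-\gamma\delta_k$: this is precisely what $\delta_k\to 0$ secures on the tail, and it is the reason one splits off the first $k_0$ terms rather than bounding term by term from the start. As an alternative in reserve, I would argue directly with logarithms, using $\log(1+x)\le x$ and $-\log(1-x)\le 2x$ for $0\le x\le\tfrac12$ to get $\log\frac{1+\gamma\delta_k}{1-\gamma\delta_k}\le 3\gamma\delta_k$ on the tail, whence $\sum_{k\ge k_0}\log\frac{1+\gamma\delta_k}{1-\gamma\delta_k}\le 3\gamma\sum_k\delta_k<+\infty$; this parallels the proof of Lemma \ref{sumprod} and reaches the same conclusion.
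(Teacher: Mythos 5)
Your proposal is correct and takes essentially the same approach as the paper: use summability to get $\delta_k\to 0$, split off the finitely many leading factors where $1-\gamma\delta_k$ could be small, and reduce the tail product to Lemma \ref{sumprod}. In fact, your per-factor bound $\frac{1+\gamma\delta_k}{1-\gamma\delta_k}=1+\frac{2\gamma\delta_k}{1-\gamma\delta_k}\le 1+4\gamma\delta_k$ is a cleaner and strictly correct version of the paper's tail estimate, which as written bounds $\prod_{k\ge\hat k}\frac{1+\gamma\delta_k}{1-\gamma\delta_k}$ by $2\prod_{k\ge\hat k}(1+\gamma\delta_k)$ --- a single factor of $2$ pulled out of an infinite product, which does not literally follow from $1-\gamma\delta_k\ge\tfrac12$ alone, whereas your argument closes that gap.
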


\begin{proof}
Since $\sum_{k=1}^{\infty} \delta_k<+\infty$, we have $\delta_k\rightarrow 0$ when $k \to \infty$.
Thus, there exists an integer $\hat k$ such that
\begin{eqnarray}
\label{keylemma2}
1-\gamma\delta_k\ge \frac{1}{2}\;\;\mbox{when}\;\; k\ge\hat k.
\end{eqnarray}
Hence, we have
\begin{eqnarray*}
\prod_{k=1}^{\infty} \frac{1+\gamma\delta_k}{1-\gamma\delta_k}=\prod_{k=1}^{\hat k-1} \frac{1+\gamma\delta_k}{1-\gamma\delta_k}
\cdot \prod_{k=\hat k}^{\infty} \frac{1+\gamma\delta_k}{1-\gamma\delta_k}\le \prod_{k=1}^{\hat k-1} \frac{1+\gamma\delta_k}{1-\gamma\delta_k}
\cdot 2\prod_{k=\hat k}^{\infty}(1+\gamma\delta_k)<+\infty.
\end{eqnarray*}
The proof is complete. $\hfill$
\end{proof}

\begin{lemma}
\label{existnonmono}
Let $\{a_k\}$ and $\{b_k\}$ be positive sequences; $\sum_{k=1}^{\infty} b_k<+\infty$; and
\begin{eqnarray}
\label{mono}
a_{k+1}\le a_k+b_k,\;\;\forall k.
\end{eqnarray}
Then, the sequence $\{a_k\}$ is convergent.
\end{lemma}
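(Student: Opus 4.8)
The plan is to reduce this to the monotone convergence theorem for real sequences by building an auxiliary sequence that is genuinely monotone and bounded. The inequality $a_{k+1}\le a_k+b_k$ is ``almost nonincreasing,'' failing to be monotone only by the summable perturbation $b_k$, so the natural idea is to absorb that perturbation into the sequence itself.

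Concretely, I would first introduce the tail sums $S_k:=\sum_{j=k}^{\infty}b_j$, which are finite because $\sum_k b_k<+\infty$, and which satisfy $S_{k+1}=S_k-b_k$ as well as $S_k\to 0$ as $k\to\infty$. Then I would examine the shifted sequence $t_k:=a_k+S_k$. Using the hypothesis (\ref{mono}) together with the recursion for $S_k$, a one-line computation gives $t_{k+1}=a_{k+1}+S_{k+1}\le a_k+b_k+(S_k-b_k)=a_k+S_k=t_k$, so $\{t_k\}$ is nonincreasing. (Equivalently, one may subtract off the partial sums $\sum_{j=1}^{k-1}b_j$ from $a_k$; the two formulations differ only by the constant $\sum_k b_k$.)

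Next I would note that $\{t_k\}$ is bounded below: since $a_k>0$ and $S_k\ge 0$ for every $k$, we have $t_k\ge 0$. A nonincreasing sequence bounded below converges, so $t_k\to t^*$ for some $t^*\ge 0$. Finally, because $S_k\to 0$, I conclude $a_k=t_k-S_k\to t^*$, which establishes that $\{a_k\}$ converges. I do not anticipate a genuine obstacle here; the only point that deserves a moment's care is verifying that the tail sums $S_k$ are finite and tend to zero, which is precisely the content of the assumption $\sum_k b_k<+\infty$. Everything else is routine algebra combined with the monotone convergence theorem.
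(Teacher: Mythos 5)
Your proof is correct, and it takes a genuinely different route from the paper's. The paper first shows $\{a_k\}$ is bounded (from $a_{k+1}\le a_1+\sum_{i}b_i$) and then argues by contradiction: if there were two distinct accumulation points $a_1^*<a_2^*$, one picks a subsequence term close to $a_1^*$ beyond which the tail $\sum_i b_i$ is smaller than half the gap, and the recursion then traps all later terms below $a_2^*$, contradicting the existence of a subsequence converging to $a_2^*$. You instead absorb the perturbation into the sequence: with $S_k=\sum_{j\ge k}b_j$, the auxiliary sequence $t_k=a_k+S_k$ is nonincreasing and bounded below, hence convergent, and $a_k=t_k-S_k$ inherits the same limit since $S_k\to 0$. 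Your version is the standard ``quasi-monotone plus summable error'' device; it is shorter, avoids subsequence extraction entirely, and produces the limit directly rather than ruling out nonconvergence. It also sidesteps a delicate point in the paper's argument: with the paper's choice $\epsilon=\frac{1}{2}(a_2^*-a_1^*)$ one only gets the bound $a_{k_t}<a_2^*$, which does not by itself contradict $a_{k_t}\to a_2^*$ (convergence from below is possible); a strictly smaller $\epsilon$, say a third of the gap, is needed to get a uniform separation from $a_2^*$. The paper's approach, on the other hand, needs no auxiliary construction and mirrors the accumulation-point reasoning the authors reuse in their global convergence theorems, so it fits the surrounding development more naturally.
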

\begin{proof}
First, it follows from (\ref{mono}) that
$$a_{k+1}\le a_1+\sum_{i=1}^k b_i \le a_1+\sum_{i=1}^{\infty} b_i, \;\;\forall \;k.$$
Since $\sum_{k=1}^{\infty} b_k<+\infty$, the sequence $\{a_k\}$ is bounded. Thus, it has at least
one accumulation point, say $a_1^*$. That is, there exists a subsequence $\{a_{k_j}\}$ converging to $a_1^*$.
Suppose that the sequence $\{a_k\}$ is not convergent.
Then, there exists another subsequence $\{a_{k_t}\}$ converging to another point, say $a_2^*$. Obviously, $a_1^*\neq a_2^*$.
Without loss of generality, let us assume $a_2^*>a_1^*$. Define $\epsilon=\frac{1}{2}(a_2^*-a_1^*)>0$.
There exists an integer $J_2$ such that
$$\sum_{i=J_2}^{\infty} b_i<\epsilon,$$
where $\epsilon>0$ is a given scalar. On the other hand, for the given $\epsilon$, there exists integers $J_1>J_2$ such that
$$|a_{k_{J_1}}-a_1^*|<\epsilon.$$
  Then, we get
$$a_{k_t}\le a_{k_{J_1}}+\sum_{i=k_{J_1}+1}^{\infty}b_i<a_1^*+\epsilon+\epsilon=a_2^*, \;\; \forall k_t>k_{J_1}.$$
It contradicts with the fact $a_{k_t}\rightarrow a_2^*$ when $t \to \infty$. Hence, the sequence $\{a_k\}$ is convergent. The proof is complete. $\hfill$
\end{proof}
%[{\bf I have revised the proof, check it. Done}]

Now we start to prove the global convergence of (\ref{InexaGPPA}). The key is the sequence generated by the inexact version (\ref{InexaGPPA}) is asymptotically emerged with the sequence by the generalized PPA (\ref{RPPAscheme-ck}). With this fact, the convergence of (\ref{InexaGPPA}) can be established easily.

\begin{theorem}\label{new-2}
Let $\{z^k\}$ be the sequence generated by the inexact version of the generalized PPA scheme (\ref{InexaGPPA}). Then, we have
\begin{itemize}
\item [(1).] The sequence $\{z^k\}$ is bounded.

\item[(2).] It holds that
\begin{eqnarray}
\label{solcon}
\lim_{k\to \infty}\|z^k-\tilde z^k\|=0.
\end{eqnarray}
\end{itemize}
\end{theorem}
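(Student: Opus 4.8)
The plan is to treat the inexact iterate as a perturbation of a single exact step and then to quantify that perturbation by the summable relative errors. First I would write $e^k := \bar z^k - \tilde z^k$, so the prescribed accuracy in (\ref{InexaGPPA}) reads $\|e^k\| \le \delta_k\|z^k - z^{k+1}\|$, and introduce the ``virtual'' exact iterate $\hat z^{k+1} := z^k - \gamma(z^k - \tilde z^k) = (1-\gamma)z^k + \gamma\tilde z^k$. Since $z^{k+1} = (1-\gamma)z^k + \gamma\bar z^k$, this yields the clean decomposition $z^{k+1} = \hat z^{k+1} + \gamma e^k$. The crucial observation is that the strict contraction estimate (\ref{cont_PR}) of Theorem \ref{rel_PR global} was derived solely from the relations $\tilde z^k = J_{c_kT}(z^k)$ and $\hat z^{k+1} = z^k - \gamma(z^k-\tilde z^k)$; hence it applies verbatim to $\hat z^{k+1}$ even though $z^k$ itself is produced inexactly, giving $\|\hat z^{k+1}-z^*\|^2 \le \|z^k-z^*\|^2 - \gamma(2-\gamma)\|z^k-\tilde z^k\|^2$ and in particular $\|\hat z^{k+1}-z^*\| \le \|z^k-z^*\|$ for $\gamma\in(0,2)$.

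For part (1) I would work at the level of norms. By the triangle inequality and the above, $\|z^{k+1}-z^*\| \le \|\hat z^{k+1}-z^*\| + \gamma\|e^k\| \le \|z^k-z^*\| + \gamma\delta_k\|z^k-z^{k+1}\|$. The right-hand side still involves $z^{k+1}$ through the relative error, so I would unwind this implicit dependence via $\|z^k-z^{k+1}\| \le \|z^k-z^*\| + \|z^{k+1}-z^*\|$, which after rearranging gives $(1-\gamma\delta_k)\|z^{k+1}-z^*\| \le (1+\gamma\delta_k)\|z^k-z^*\|$. Since $\sum_k\delta_k<\infty$ forces $\delta_k\to0$, for all $k$ beyond some index $\hat k$ we have $\gamma\delta_k<1$, hence $\|z^{k+1}-z^*\| \le \frac{1+\gamma\delta_k}{1-\gamma\delta_k}\|z^k-z^*\|$. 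Iterating this bound from $\hat k$ and invoking Lemma \ref{furprod} to control $\prod_k\frac{1+\gamma\delta_k}{1-\gamma\delta_k}<\infty$ shows that $\|z^k-z^*\|$ is bounded, which is exactly the boundedness claim.

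For part (2) I would return to the squared norm and convert the decomposition into a quasi-Fej\'er inequality. Expanding $\|z^{k+1}-z^*\|^2 = \|\hat z^{k+1}-z^*\|^2 + 2\gamma\langle\hat z^{k+1}-z^*,e^k\rangle + \gamma^2\|e^k\|^2$ and using Cauchy--Schwarz together with the boundedness from part (1), say $M := \sup_k\|z^k-z^*\|<\infty$, I would bound $\|\hat z^{k+1}-z^*\|\le M$ and $\|e^k\|\le\delta_k\|z^k-z^{k+1}\|\le 2M\delta_k$, so that the two error terms are dominated by $b_k := 4\gamma M^2\delta_k + 4\gamma^2M^2\delta_k^2$. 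Because $\sum_k\delta_k<\infty$ and $\delta_k\to0$ (hence $\sum_k\delta_k^2<\infty$), we get $\sum_k b_k<\infty$. Substituting the contraction estimate for $\|\hat z^{k+1}-z^*\|^2$ then produces $\|z^{k+1}-z^*\|^2 \le \|z^k-z^*\|^2 - \gamma(2-\gamma)\|z^k-\tilde z^k\|^2 + b_k$. Telescoping this from $1$ to $K$, using $\|z^{K+1}-z^*\|^2\ge0$, and letting $K\to\infty$ bounds $\gamma(2-\gamma)\sum_k\|z^k-\tilde z^k\|^2$ by $\|z^1-z^*\|^2+\sum_k b_k<\infty$; since $\gamma(2-\gamma)>0$ for $\gamma\in(0,2)$, this forces $\|z^k-\tilde z^k\|\to0$, which is (\ref{solcon}). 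Alternatively, Lemma \ref{existnonmono} applied to $a_k=\|z^k-z^*\|^2$ gives convergence of $\{a_k\}$, which combines with the same summation.

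The main obstacle is the implicit, relative nature of the error bound $\|e^k\|\le\delta_k\|z^k-z^{k+1}\|$: the tolerance depends on the as-yet-unknown step, so one cannot directly treat the scheme as an exact step plus an a priori summable absolute perturbation. The resolution is the two-sided split of $\|z^k-z^{k+1}\|$ around $z^*$, which turns the relative error into the multiplicative factor $\frac{1+\gamma\delta_k}{1-\gamma\delta_k}$; this is precisely why boundedness (part 1) must be secured first---through the product Lemmas \ref{sumprod}--\ref{furprod}---before the perturbation $b_k$ can be certified summable and the quasi-Fej\'er argument for part (2) closed.
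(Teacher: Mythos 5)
Your proposal is correct and follows essentially the same route as the paper's proof: the same virtual exact iterate $\hat z^{k+1}=(1-\gamma)z^k+\gamma\tilde z^k$ and the contraction (\ref{cont_PR}), the same splitting $\|z^k-z^{k+1}\|\le\|z^k-z^*\|+\|z^{k+1}-z^*\|$ turning the relative error into the ratio $\frac{1+\gamma\delta_k}{1-\gamma\delta_k}$ controlled by Lemma \ref{furprod}, and the same quasi-Fej\'er expansion with summable perturbation $b_k$ (the paper's $4R^2\gamma\delta_k+4R^2\gamma^2\delta_k^2$) followed by telescoping to get $\sum_k\|z^k-\tilde z^k\|^2<\infty$. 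The only cosmetic differences are that you telescope directly (the paper also invokes Lemma \ref{existnonmono} to get convergence of $\|z^k-z^*\|$, which it needs later for Theorem \ref{gloinGPPA}) and that you are slightly more careful to start the product estimate after $\gamma\delta_k<1$ holds.
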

\begin{proof}
Recall we use ${\tilde z}^k=J_{c_kT}(z^k)$ for easier notation. Let us use one more notation
$$
\hat z^{k+1}:=(1-\gamma)z^k+\gamma \tilde z^k.
$$
Indeed, $\hat z^{k+1}$ denotes the iterate generated by the exact version (\ref{RPPAscheme-ck}) from the given $z^k$. Thus, for an arbitrary solution point $z^*$ of (\ref{maxsol}), it follows from (\ref{cont_PR}) that
\begin{eqnarray}
\label{inexatmp1}
\|\hat z^{k+1} -z^*\|^2\le\|z^k-z^*\|^2-\gamma(2-\gamma)\|z^k-\tilde z^k\|.
\end{eqnarray}
Recall the definition of $z^{k+1}$ in (\ref{InexaGPPA}). We have
\begin{eqnarray}
\label{tkp1}
\hat z^{k+1}-z^{k+1}=\gamma(\tilde z^k-\bar z^k),
\end{eqnarray}
\;where $\bar z^k$ is also given in (\ref{InexaGPPA}). Thus, for any solution point $z^*$ of (\ref{maxsol}), we have
\begin{eqnarray}\label{inexatmp2}
\|z^{k+1}-z^*\|&\le &\|z^{k+1}-\hat z^{k+1}\|+\|\hat z^{k+1}-z^*\|\nn\\
                 &\le& \gamma\delta_k\|z^k-z^{k+1}\|+\|\hat z^{k+1}-z^*\|\nn\\
                 &\le& \gamma\delta_k(\|z^k-z^*\|+\|z^{k+1}-z^*\|)+\|\hat z^{k+1}-z^*\|\nn\\
                 &\le& \gamma\delta_k(\|z^k-z^*\|+\|z^{k+1}-z^*\|)+\|z^k-z^*\|,
\end{eqnarray}
where the second inequality results from the inexact criterion in (\ref{InexaGPPA}) and the last
inequality follows from (\ref{inexatmp1}).
Then, we get
$$\|z^{k+1}-z^*\|\le \frac{1+\gamma\delta_k}{1-\gamma\delta_k}\|z^k-z^*\|\le \cdots \le \prod_{i=1}^{k} \frac{1+\gamma\delta_i}{1-\gamma\delta_i}\|z^0-z^*\|.$$
Using Lemma \ref{furprod}, the sequence $\{z^k\}$ is bounded. The first assertion is proved.

Now we prove the second assertion. Again, for an arbitrary solution point $z^*$ of (\ref{maxsol}), since $\{z^k\}$ is bounded and because of (\ref{inexatmp1}), there exists a positive scalar $R$ such that
\begin{eqnarray}
\label{tkp3}
\|z^k-z^*\| < R, \;\;\forall k
\end{eqnarray}
and
\begin{eqnarray}
\label{tkp2}
\|\hat z^k-z^*\|<R, \;\;\forall k.
\end{eqnarray}
We thus have
\begin{eqnarray}
\label{inexatmp3}
\|z^{k+1}-z^*\|^2&=&\|\hat z^{k+1}-z^*+(z^{k+1}-\hat z^{k+1})\|^2\nn\\
                 &=&\|\hat z^{k+1}-z^*\|^2+\|z^{k+1}-\hat z^{k+1}\|^2+2\langle \hat z^{k+1}-z^*, z^{k+1}-\hat z^{k+1}\rangle \nn \\
                 &\le&  \|\hat z^{k+1}-z^*\|^2 + 2\|\hat z^{k+1}-z^* \|\| z^{k+1}-\hat z^{k+1}\| + \|z^{k+1}-\hat z^{k+1}\|^2\nn\\
                 &\le&  \|\hat z^{k+1}-z^*\|^2+2R\gamma\delta_k\|z^k-z^{k+1}\|+\gamma^2\delta^2_k \|z^k-z^{k+1}\|^2\nn\\
                 &\le&   \|z^k-z^*\|^2-\gamma(2-\gamma)\|z^k-\tilde z^k\|^2 +4R^2\gamma\delta_k+4R^2\gamma^2\delta_k^2,
\end{eqnarray}
where the second inequality follows from (\ref{tkp2}) and (\ref{InexaGPPA}); and the last inequality is because of (\ref{inexatmp1}) and
(\ref{tkp3}).
Moreover, since $\gamma \in (0,2)$, we have
$$\|z^{k+1}-z^*\|^2\le\|z^k-z^*\|^2+4R^2\gamma\delta_k+4R^2\gamma^2\delta_k^2,$$
and
$$\sum_k (4R^2\gamma\delta_k+4R^2\gamma^2\delta_k^2)<+\infty.$$
Now, using Lemma \ref{existnonmono} with $a_k:=\|z^k-z^*\|^2$ and $b_k:=4R^2\gamma\delta_k+4R^2\gamma^2\delta_k^2$, we obtain
\begin{eqnarray}
\label{inbouzk}
\lim_{k\rightarrow \infty}\|z^k-z^*\|=:A,\;\;
\end{eqnarray}
where $A$ is a positive scalar. On the other hand, recall that $\{\delta_k\}$ is summable, so is $\{\delta_k^2\}$. We thus have $\sum_k\delta_k^2<\infty$.
Then, it follows from (\ref{inexatmp3}) that
$$\gamma(2-\gamma)\|z^k-\tilde z^k\|^2\le \|z^k-z^*\|^2-  \|z^{k+1}-z^*\|^2+4R^2\gamma\delta_k+4R^2\gamma^2\delta_k^2,\; \forall k.$$
Then, we have $\sum_k\|z^k-\tilde z^k\|^2<\infty$ and thus $\lim_{k\to \infty}\|z^k-\tilde z^k\|=0$.
The proof is complete. $\hfill$
\end{proof}

Theorem \ref{new-2} shows that the accuracy of iterates generated by the inexact version (\ref{InexaGPPA}) is iteratively increased, which essentially implies the convergence of the sequence of (\ref{InexaGPPA}). We provide the rigorous proof in the following theorem.

\begin{theorem}
\label{gloinGPPA}
(Global convergence) Let $\{z^k\}$ be the sequence generated by the inexact version of the generalized PPA scheme (\ref{InexaGPPA}). Then, it converges to a solution point of (\ref{maxsol}).
 %If $T^{-1}$ is Lipschitz continuous at $0$, then $\{z^k\}$ converges strongly to $z^*$, the unique solution of $0\in T(z)$.
\end{theorem}
\begin{proof}
Since the sequence $\{z^k\}$ is bounded, it has an accumulation point $z^{\infty}$. Let $\{z_{k_j}\}$ be the subsequence converging to $z^{\infty}$. That is,
$\lim_{j \to \infty}\|z^{k_j}-z^\infty\|= 0$. Using (\ref{solcon}) with $k=k_j$, we have
\begin{eqnarray}\label{inter0}
\|\tilde z^{k_j} -z^{k_j}\|\rightarrow 0,\;\;\hbox{when}\;\; j \to \infty.
\end{eqnarray}
Then, combining with $\|z^{k_j}-z^\infty\|\rightarrow 0$,  we get
 \begin{eqnarray}
 \label{inter1}
 \|J_{c_{k_j}T}(z^{k_j})-z^\infty\|\rightarrow0, \;\;\hbox{when}\;\; j \to \infty.
 \end{eqnarray}
Also, it follows from (\ref{inter0}) that
\begin{eqnarray}
\label{inter2}
\|z^{k_j}-J_{c_{k_j} T}(z^{k_j})\|\rightarrow0, \;\;\hbox{when}\;\; j \to \infty.
\end{eqnarray}
Note that
$$c_k^{-1}(z^k-J_{c_kT}(z^k))\in T(J_{c_k T}(z^k)).$$
Thus, using the monotonicity of $T$, for any $k$, we have
$$
\langle z-J_{c_k T}(z^k), w- c_k^{-1}(z^k-J_{c_kT}(z^k))\rangle\ge0, \;\; \hbox{for all} \; z, w \;\hbox{satisfying}\;\; w \in T(z). $$
Let $k=k_j$ in the above inequality, take $j\rightarrow\infty$, and combine it with (\ref{inter1}) and (\ref{inter2}). We thus have
$$
\langle z-z^\infty, w \rangle \ge0, \;\; \hbox{for all} \; z, w \;\hbox{satisfying}\;\; w \in T(z).$$
which, together with the monotonicity of $T$, means that $z^\infty$ is a solution point of (\ref{maxsol}).

Finally, since $z^\infty$ is a solution point of (\ref{maxsol}), (\ref{inbouzk}) can be written as $\lim_{k\rightarrow \infty}\|z^k-z^{\infty}\|=A$ and indeed we have $A=0$ because $z^{k_j} \to z^\infty$. Thus, the sequence $\{z^k\}$ converges to $z^\infty$ which is a solution point of (\ref{maxsol}). The proof is complete. $\hfill$
\end{proof}

\subsection{Linear Convergence}
\label{inexactlincon}
In this subsection, under the assumption ``$T^{-1}$ is Lipschitz continuous at $0$ with positive modulus", we prove the linear convergence for the sequence $\{z^k\}$ generated by (\ref{InexaGPPA}). Recall the notation $\tilde z^k=J_{c_kT}(z^k)$. We first prove a lemma.

\begin{lemma}
\label{neiginexa} Let $\{z^k\}$ be the sequence generated by the inexact version of the generalized PPA scheme (\ref{InexaGPPA}) with $\gamma\in(0,2)$ and $\sum_k\delta_k<+\infty$.
If $T^{-1}$ is Lipschitz continuous at $0$ with modulus $a>0$,
then there exists an integer $k_1$ such that
$$ \|\tilde z^k-z^*\|\le\frac{a}{\sqrt{a^2+c_k^2}}\|z^k-z^*\|\;\;\forall k>k_1.$$
\end{lemma}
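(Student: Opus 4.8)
The plan is to mimic almost verbatim the proof of Lemma \ref{neig}, since the conclusion is identical; the only structural change is the source of the crucial limit $\lim_{k\to\infty}\|z^k-\tilde z^k\|=0$, which for the inexact scheme is supplied by Theorem \ref{new-2} rather than by Theorem \ref{rel_PR global}. The engine of both proofs is the pointwise estimate of Lemma \ref{strongm}, which is a statement about the resolvent $J_{c_kT}$ alone and therefore does not care whether the iterate $z^k$ was produced by the exact or the inexact recursion.

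First I would invoke Lemma \ref{strongm} with $z=z^k$ and the same modulus $a$. This produces a threshold $\tau>0$, independent of $k$, such that
$$\|\tilde z^k-z^*\|\le\frac{a}{\sqrt{a^2+c_k^2}}\|z^k-z^*\|\quad\text{whenever}\quad \|c_k^{-1}(z^k-\tilde z^k)\|\le\tau,$$
after rewriting $J_{c_kT}(z^k)=\tilde z^k$. Thus the desired bound holds at every index $k$ for which the smallness condition $\|c_k^{-1}(z^k-\tilde z^k)\|\le\tau$ is met, and it remains only to show that this condition is eventually satisfied.

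Next I would verify the smallness condition for all large $k$. Using $c_k\ge\kappa>0$ we have $\|c_k^{-1}(z^k-\tilde z^k)\|\le\kappa^{-1}\|z^k-\tilde z^k\|$, so it suffices to drive $\|z^k-\tilde z^k\|$ below $\kappa\tau$. This is exactly where the inexact analysis diverges from the exact one: rather than appealing to the contraction inequality (\ref{cont_PR}), I would invoke the limit (\ref{solcon}) of Theorem \ref{new-2}, namely $\lim_{k\to\infty}\|z^k-\tilde z^k\|=0$, which was established for the inexact scheme (\ref{InexaGPPA}) without any Lipschitz assumption. Consequently there is an integer $k_1$ with $\kappa^{-1}\|z^k-\tilde z^k\|\le\tau$ for all $k>k_1$, and combining this with the estimate of the previous step yields the claimed inequality for every $k>k_1$.

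I do not anticipate any genuine obstacle: once Theorem \ref{new-2} is in hand the argument is a routine transcription of Lemma \ref{neig}. The only point deserving attention is the ordering of quantifiers: the threshold $\tau$ from Lemma \ref{strongm} must be fixed first, so that $k_1$ can then be selected (via the summability-driven convergence of Theorem \ref{new-2}) to make $\|z^k-\tilde z^k\|$ small relative to that already-fixed $\tau$. Keeping this order is the only subtlety in an otherwise elementary proof.
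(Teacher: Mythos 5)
Your proposal is correct and follows essentially the same route as the paper: the paper's own proof simply cites Theorem \ref{new-2} for $\lim_{k\to\infty}\|z^k-\tilde z^k\|=0$ and then refers back to the proof of Lemma \ref{neig}, which is precisely the transcription you carry out (Lemma \ref{strongm} to fix $\tau$, then $c_k\ge\kappa>0$ and the vanishing of $\|z^k-\tilde z^k\|$ to verify the smallness condition for all large $k$). Your remark about fixing $\tau$ before choosing $k_1$ is the right quantifier ordering and matches the paper's argument.
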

\begin{proof}
%Note that the definition of $\tilde z^k$ is the same as Lemma \ref{neig}, the conclusion follows directly.
From Theorem \ref{new-2}, we know that $\lim_{k\rightarrow 0}\|z^k-\tilde z^k\|=0$.
Then, the conclusion follows immediately from the proof of Lemma \ref{neig}. $\hfill$
\end{proof}

The main result of this subsection is summarized in the following theorem. This result reduces to Theorem 2 in \cite{Rock76} if $\gamma=1$.

\begin{theorem}
\label{rel PRinexact}
Assume $T^{-1}$ is Lipschitz continuous at $0$ with modulus $a>0$ and the proximal parameter sequence $\{c_k\}$ is bounded away from zero ($c_k\ge \kappa>0)$. Let $\{z^k\}$ be the sequence generated by the inexact version of the generalized PPA scheme (\ref{InexaGPPA}). Then, there exist an integer $\hat{k}$ such that
$$\|z^{k+1}-z^*\|\le \theta_k\|z^k-z^*\|\;\;\hbox{when}\;\;k> \hat{k},
$$
where $z^*$ is a solution point of (\ref{maxsol}) and
$$
0<\theta_k:=\frac{\sqrt{\left(1-\min(\gamma,2\gamma-\gamma^2)\frac{c_k^2}{a^2+c_k^2}\right)}+\gamma\delta_k}{1-\gamma\delta_k}<1, \;\hbox{when}\; k > \hat{k}.$$
That is, $\{z^k\}$ converges linearly to $z^*$.
\end{theorem}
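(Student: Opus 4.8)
The plan is to compare the inexact iterate $z^{k+1}$ with the ``virtual'' exact iterate $\hat z^{k+1}:=(1-\gamma)z^k+\gamma\tilde z^k$ produced from the same $z^k$, exactly the device used in the proof of Theorem \ref{new-2}. The idea is that $\hat z^{k+1}$ is governed by the exact-case analysis, while the discrepancy between $z^{k+1}$ and $\hat z^{k+1}$ is controlled by the relative error built into (\ref{InexaGPPA}); combining the two will produce an implicit inequality for $\|z^{k+1}-z^*\|$ that solves into the claimed rate.

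First I would establish a contraction estimate for $\hat z^{k+1}$. The point is that the identity and the two ingredients driving the proof of Theorem \ref{rel_PR} --- the expansion of $\|\hat z^{k+1}-z^*\|^2$ in terms of $\|z^k-z^*\|^2$, $\|\tilde z^k-z^*\|^2$ and $\langle \tilde z^k-z^*, z^k-\tilde z^k\rangle$, together with the nonnegativity (\ref{keytmp}) and the resolvent bound on $\|\tilde z^k-z^*\|$ --- depend only on $z^k$ and its exact resolvent $\tilde z^k$, and not on how $z^k$ was generated. Since Lemma \ref{neiginexa} supplies exactly the bound $\|\tilde z^k-z^*\|\le \frac{a}{\sqrt{a^2+c_k^2}}\|z^k-z^*\|$ for all $k>k_1$, the same computation as in Theorem \ref{rel_PR} would yield, for $k>k_1$,
$$\|\hat z^{k+1}-z^*\|\le \sqrt{\varrho_k}\,\|z^k-z^*\|,\qquad \varrho_k:=1-\min(\gamma,2\gamma-\gamma^2)\tfrac{c_k^2}{a^2+c_k^2}.$$

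Next I would transfer this to the true iterate. By the triangle inequality $\|z^{k+1}-z^*\|\le \|z^{k+1}-\hat z^{k+1}\|+\|\hat z^{k+1}-z^*\|$, and (\ref{tkp1}) together with the error criterion in (\ref{InexaGPPA}) gives $\|z^{k+1}-\hat z^{k+1}\|=\gamma\|\tilde z^k-\bar z^k\|\le \gamma\delta_k\|z^k-z^{k+1}\|\le \gamma\delta_k(\|z^k-z^*\|+\|z^{k+1}-z^*\|)$. Inserting the contraction bound for $\hat z^{k+1}$ and moving the $\|z^{k+1}-z^*\|$ term to the left-hand side produces
$$(1-\gamma\delta_k)\|z^{k+1}-z^*\|\le (\sqrt{\varrho_k}+\gamma\delta_k)\|z^k-z^*\|,$$
which is precisely the asserted estimate with $\theta_k=\frac{\sqrt{\varrho_k}+\gamma\delta_k}{1-\gamma\delta_k}$.

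The last and most delicate step is to verify $\theta_k\in(0,1)$ for all large $k$. Because $\sum_k\delta_k<\infty$ forces $\delta_k\to 0$, the denominator is eventually positive and the whole quotient is then positive; for $\theta_k<1$ it suffices that $2\gamma\delta_k<1-\sqrt{\varrho_k}$, so the hard part will be to bound $\sqrt{\varrho_k}$ away from $1$ uniformly in $k$ rather than just for fixed $c_k$. This is exactly where $c_k\ge\kappa>0$ enters: since $t\mapsto t^2/(a^2+t^2)$ is increasing, $\varrho_k\le 1-\min(\gamma,2\gamma-\gamma^2)\frac{\kappa^2}{a^2+\kappa^2}<1$ for every $k$, so $\sqrt{\varrho_k}$ is dominated by a constant strictly below $1$ and $1-\sqrt{\varrho_k}$ is bounded below by a positive constant. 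As $\delta_k\to 0$, there is then an integer beyond which $2\gamma\delta_k$ drops under that constant; choosing $\hat k$ to be at least this index and at least $k_1$ delivers $0<\theta_k<1$ for $k>\hat k$ and hence the stated linear convergence.
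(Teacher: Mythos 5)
Your proposal is correct and follows essentially the same route as the paper's proof: the same virtual exact iterate $\hat z^{k+1}$, the same contraction bound (the paper's inequality (\ref{kkeyi}), obtained from Lemma \ref{neiginexa} and the computation of Theorem \ref{rel_PR}), the same triangle-inequality transfer via (\ref{inexatmp2}), and the same rearrangement into $\theta_k$. Your final step is in fact slightly more explicit than the paper's, which merely notes $\delta_k\to 0$ and $c_k\ge\kappa>0$ where you spell out the uniform bound $\varrho_k\le 1-\min(\gamma,2\gamma-\gamma^2)\frac{\kappa^2}{a^2+\kappa^2}<1$.
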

\begin{proof}
Recall in Theorem \ref{gloinGPPA}, it is proved that the sequence $\{z^k\}$ converges to a solution point $z^*$ of (\ref{maxsol}). First, it is easy to see that there exists an integer $ k_1$ such that
%\begin{eqnarray*}
%\|z^k-\tilde z^k\|&=&\|z^k-z^*+z^*-{\tilde z}^k\| \\
%&\ge& \|z^k-z^*\|-\|z^*-{\tilde z}^k\| \\
%&\ge&(1-\frac{a}{\sqrt{a^2+c_k^2}})\|z^k-z^*\|,\;\;k> k_1,
%\end{eqnarray*}
%where the last inequality follows from Lemma \ref{neiginexa}. Then, using (\ref{inexatmp1}), we have
\begin{eqnarray}
\label{kkeyi}
\|\hat z^{k+1} -z^*\|^2\le \left(1-\min(\gamma,2\gamma-\gamma^2)\frac{c_k^2}{a^2+c_k^2}\right)\|z^k-z^*\|^2,\;\;k> k_1.
\end{eqnarray}
In addition, it follows from (\ref{inexatmp2}) that
\begin{eqnarray*}
\|z^{k+1}-z^*\|&\le& \gamma\delta_k(\|z^k-z^*\|+\|z^{k+1}-z^*\|)+\|\hat z^{k+1}-z^*\|\nn\\
               &\le &\gamma\delta_k(\|z^k-z^*\|+\|z^{k+1}-z^*\|)+\sqrt{\left(1-\min(\gamma,2\gamma-\gamma^2)\frac{c_k^2}{a^2+c_k^2}\right)}\|z^k-z^*\|,\;\;k> k_1,
\end{eqnarray*}
where the last inequality follows from  (\ref{kkeyi}). Accordingly,  we have
$$\|z^{k+1}-z^*\|\le \frac{\sqrt{1-\min(\gamma,2\gamma-\gamma^2)\frac{c_k^2}{a^2+c_k^2}}+\gamma\delta_k}{1-\gamma\delta_k}\|z^k-z^*\| \;\;\mbox{when}\;k> k_1.$$
Note that $\delta_k\rightarrow 0$ and $c_k\ge\kappa>0$. Then, there exists an integer $\hat{k}$, without loss of generality, assuming $\hat{k}>k_1$, such that
$$\theta_k:=\frac{\sqrt{1-\min(\gamma,2\gamma-\gamma^2)\frac{c_k^2}{a^2+c_k^2}}+\gamma\delta_k}{1-\gamma\delta_k}<1, \;\;\mbox{when}\;\;k> \hat{k}.$$
Hence, $\{z^k\}$ converges linearly to $z^*$, a solution point of (\ref{maxsol}). The proof is complete. $\hfill$
\end{proof}

\begin{remark}\label{relassuA-2}
Similarly as Section \ref{linear-exact}, it can been seen from the proofs of Lemma
\ref{neiginexa} and Theorem \ref{rel PRinexact} that the linear convergence of the sequence $\{z^k\}$ generated by (\ref{InexaGPPA}) can be guaranteed under the less strengthen condition  `` $T^{-1}$ is Lipschitz continuous at the iterates $\{\tilde z^k\}$ with positive modulus when $k$ is large enough".
\end{remark}

\section{Further Study on Assumption} \label{relCon}
\setcounter{equation}{0}

Under the assumption ``$T^{-1}$ is Lipschitz continuous at $0$ with positive modulus", we have shown the linear convergence  for both the exact version (\ref{RPPAscheme-ck}) and inexact version (\ref{InexaGPPA}) of the generalized PPA. Recall that the generalized PPA (\ref{RPPAscheme-ck}) include the PPA  (\ref{PPAscheme}) as a special case with $\gamma=1$ and our analysis extends the result in \cite{Rock76} for (\ref{PPAscheme}). In \cite{CYuan}, the linear convergence of the generalized PPA (\ref{RPPAscheme-ck}) with $c_k\equiv c$ has been studied under the assumption that $T$ is $\alpha$-strongly monotone, which is stronger than ``$T^{-1}$ is Lipschitz continuous at $0$ with positive modulus".

In the following, we show that although we restrict our analysis under the assumption ``$T^{-1}$ is Lipschitz continuous at $0$ with positive modulus", theoretically this assumption can be further relaxed in order to ensure the linear convergence of (\ref{RPPAscheme-ck}) and (\ref{InexaGPPA}). Note that the assertion in the following lemma does not depend on any specific iterative scheme.

\begin{theorem}\label{Thm-new-1}
Let $z^*$ be a solution point of (\ref{maxsol}) and the sequence $\{c_k\}$ be both upper and below bounded, i.e., $0<\kappa\le c_k \le \zeta$ for all $k$.
If $T^{-1}$ is Lipschitz continuous at 0 with positive modulus $a$, then $J_{c_k T}$ defined in (\ref{resolvent}) is Lipschitz continuous at $z^*$ and $\sup_k \{L_k\}<1$, where $L_k$ is the Lipschitz constant of $J_{c_k T}$.
\end{theorem}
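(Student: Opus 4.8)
The plan is to read off the Lipschitz constant of $J_{c_k T}$ at $z^*$ directly from Lemma \ref{strongm}, and then to verify that the neighborhood on which that estimate holds can be chosen independently of $k$. First I would record the elementary fact that $z^*$ is a fixed point of every resolvent: since $0\in T(z^*)$, we have $z^*\in(I+c_kT)(z^*)$, hence $J_{c_kT}(z^*)=z^*$ for all $k$. Consequently, Lipschitz continuity of $J_{c_kT}$ at $z^*$ amounts to an estimate of the form $\|J_{c_kT}(z)-z^*\|\le L_k\|z-z^*\|$ for $z$ in a neighborhood of $z^*$, and Lemma \ref{strongm} already delivers exactly such a bound with $L_k=a/\sqrt{a^2+c_k^2}$, provided the side condition $\|c_k^{-1}(z-J_{c_kT}(z))\|\le\tau$ holds, where $\tau>0$ is the (fixed, $k$-independent) constant furnished by Definition \ref{invLipze}.

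The crux is therefore to convert that side condition into a genuine, $k$-uniform neighborhood of $z^*$. To do this I would apply Lemma \ref{proxity}(ii) with $z'=z^*$; using $J_{c_kT}(z^*)=z^*$ and hence $(I-J_{c_kT})(z^*)=0$, this yields
\begin{eqnarray*}
\|z-z^*\|^2\ge\|J_{c_kT}(z)-z^*\|^2+\|(I-J_{c_kT})(z)\|^2,
\end{eqnarray*}
so in particular $\|z-J_{c_kT}(z)\|\le\|z-z^*\|$. Combining this with the lower bound $c_k\ge\kappa$ gives
\begin{eqnarray*}
\|c_k^{-1}(z-J_{c_kT}(z))\|\le\kappa^{-1}\|z-z^*\|.
\end{eqnarray*}
Thus, if $z$ lies in the ball $\{z:\|z-z^*\|\le\kappa\tau\}$ --- a neighborhood depending only on $\kappa$ and $\tau$, not on $k$ --- then the side condition of Lemma \ref{strongm} is automatically satisfied, and the bound $\|J_{c_kT}(z)-z^*\|\le \frac{a}{\sqrt{a^2+c_k^2}}\|z-z^*\|$ holds for every $k$ on this common neighborhood. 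This establishes Lipschitz continuity of $J_{c_kT}$ at $z^*$ with constant $L_k=a/\sqrt{a^2+c_k^2}$.

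Finally, taking the supremum is immediate from monotonicity in $c_k$: since $c_k\ge\kappa>0$ and $t\mapsto a/\sqrt{a^2+t^2}$ is decreasing, we obtain
\begin{eqnarray*}
L_k=\frac{a}{\sqrt{a^2+c_k^2}}\le\frac{a}{\sqrt{a^2+\kappa^2}}<1,
\end{eqnarray*}
whence $\sup_k L_k\le a/\sqrt{a^2+\kappa^2}<1$. I expect the main obstacle to be precisely the uniformity argument of the preceding paragraph: Lemma \ref{strongm} is stated pointwise with a side condition, and one must ensure that the radius $\kappa\tau$ of the admissible neighborhood does not shrink to zero as $k$ varies. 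The nonexpansiveness estimate $\|z-J_{c_kT}(z)\|\le\|z-z^*\|$ together with the lower bound $\kappa$ is exactly what guarantees this. I would also remark that the upper bound $c_k\le\zeta$ is not actually needed for the stated conclusion --- only the lower bound $\kappa$ enters, both in the uniform neighborhood and in the estimate $\sup_k L_k<1$.
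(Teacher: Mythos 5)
Your proof is correct, and at the crucial step it diverges from the paper's own argument in a way worth noting. Both proofs extract the constant $L_k=a/\sqrt{a^2+c_k^2}$ from Lemma \ref{strongm} and obtain $\sup_k L_k\le a/\sqrt{a^2+\kappa^2}<1$ from the lower bound $c_k\ge\kappa$; the difference lies in how the side condition $\|c_k^{-1}(z-J_{c_kT}(z))\|\le\tau$ is converted into a statement about a neighborhood of $z^*$. You use the fixed-point property $J_{c_kT}(z^*)=z^*$ together with Lemma \ref{proxity}(ii) to get $\|z-J_{c_kT}(z)\|\le\|z-z^*\|$, hence the side condition holds on the $k$-uniform ball $\|z-z^*\|\le\kappa\tau$; this establishes the contraction estimate on a genuine metric ball around $z^*$, and it justifies your closing remark that the upper bound $c_k\le\zeta$ is never used. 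The paper argues in the opposite direction: invoking the Lipschitz continuity of $T^{-1}$ at $0$ and the bound $c_k\le\zeta$, it shows that $\|z-z^*\|\to 0$ as the scaled residual $\to 0$, i.e., that points satisfying the side condition are close to $z^*$. That implication by itself does not show that the region where the estimate holds \emph{contains} a ball around $z^*$, which is what pointwise Lipschitz continuity at $z^*$ requires; your direction of the argument is the logically clean one, and it is precisely what lets you drop the hypothesis $c_k\le\zeta$. For the paper's downstream use (Theorem \ref{rela} and Remarks \ref{relassuA}, \ref{relassuA-2}, where the estimate is applied only at iterates $\tilde z^k$ whose residuals vanish) either formulation suffices, but your version proves the theorem as literally stated.
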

\begin{proof}
%First, according to Theorem \ref{rel_PR global} and \ref{new-2}, we have
%  $\lim_{k\rightarrow \infty}\|z^k-\tilde z^k\|=0$.
%  Then, there exists an integer $\hat k$ such that,
%  $$c_k^{-1}\|z^k-\tilde z^k\|\le \kappa^{-1}\|z^k-\tilde z^k\|\le \tau\;\mbox{when} \;k>\hat k.$$
It follows from  Lemma \ref{strongm} that there exists $\tau>0$ such that
\begin{eqnarray}
\label{newtmpn}
\|J_{c_kT}(z)-z^*\|\le \frac{a}{\sqrt{a^2+c_k^2}}\|z-z^*\|,\;\; \mbox{when}\;\|c_k^{-1}(z-J_{c_kT}(z))\|\le \tau.
\end{eqnarray}
Recall that $z^*\in T^{-1}(0)$, $J_{c_kT}(z)\in T^{-1}(c_k^{-1}(z-J_{c_kT}(z))$ and  $T^{-1}$ is Lipschitz continuous at 0. We thus have
$$ \|J_{c_kT}(z)- z^*\|\rightarrow 0, \;\;\mbox{when}\;\; \|c_k^{-1}(z-J_{c_kT}(z))\|\rightarrow 0.$$
Since $c_k \le \zeta$ for all $k$, we have $\frac{1}{\zeta}\|J_{\zeta T}(z)- z\|\le\frac{1}{c_k}\|J_{c_kT}(z)- z\|\rightarrow 0$.
From above inequality, we see that $\|z-z^*\| \rightarrow0$ when
$c_k^{-1}\|z-J_{c_kT}(z)\|\rightarrow 0$.
Thus, $J_{c_k T}$ is Lipschitz continuous at $z^*$ with the constant $L_{k}:=\frac{a}{\sqrt{a^2+c_k^2}}\le \frac{a}{\sqrt{a^2+\kappa^2}}<1$ for any $k$, according to (\ref{newtmpn}). The proof is complete.
%[{\bf Your previous writing of $\|T(u)\| \cdots \cdots$ is wrong. Also, I see no reason why use the notation $G_k$ so I remove it. Check this proof and make it rigorous}Thanks!
%My idea is as follows: when $z$ is near $z^*$, then $\frac{1}{\kappa}\|z-J_{\kappa T}(z)\|\rightarrow 0$.
%Hence, the condition ($c_k^{-1}\|z-J_{c_kT}(z)\|\le \tau$) is satisfied. Then, we get $J_{c_k T}$ defined in (\ref{resolvent}) is Lipschitz continuous at $z^*$ and $\sup_k \{L_k\}<1$ according to the first inequality in the proof.].
\end{proof}

\begin{theorem} \label{rela}
Suppose the sequence $\{c_k\}$ is both upper and below bounded, {\it that is}, there exists constants $\varsigma$ and $\kappa$ such that $0<\kappa\le c_k\le\varsigma$ for all $k$. Let  $\{z^k\}$ be the sequence generated by the exact version of the generalized PPA (\ref{RPPAscheme-ck}) or the inexact version (\ref{InexaGPPA}).
 If $J_{c_k T}$ is Lipschitz continuous at $z^*$ with the constant $L_k$, and $ L_G:=\sup_k\{L_k\}<1$, then
  \begin{itemize}
  \item[(1)] $T^{-1}$ is Lipschitz continuous at all the iterates $\{\tilde z^k\}$  with positive modulus when $k$ is sufficiently large.
  \item[(2)] $\{z^k\}$ converges linearly to a solution point of (\ref{maxsol}).
  \end{itemize}
\end{theorem}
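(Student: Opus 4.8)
The plan is to lean on the global convergence already in hand, which needs no Lipschitz hypothesis, and then reproduce the two mechanisms of Section \ref{linearCon}: a one-step contraction of $J_{c_kT}$ toward $z^*$ and the passage between the distance $\|z^k-z^*\|$ and the residual $\|z^k-\tilde z^k\|$. By Theorem \ref{thm-conv} (exact case (\ref{RPPAscheme-ck})) or Theorem \ref{gloinGPPA} (inexact case (\ref{InexaGPPA})), the whole sequence $\{z^k\}$ converges to a solution point $z^*$ of (\ref{maxsol}), and $\|z^k-\tilde z^k\|\to0$ (from Theorem \ref{rel_PR global} and Theorem \ref{new-2}, respectively). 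Since $z^*$ solves $0\in T(z)$, it is a fixed point of every resolvent, so $J_{c_kT}(z^*)=z^*$. Hence, once $z^k$ lies in the neighborhood of $z^*$ on which the Lipschitz estimate is valid (which happens for all large $k$ because $z^k\to z^*$ and $\|z^k-\tilde z^k\|\to0$), the hypothesis reads $\|\tilde z^k-z^*\|\le L_k\|z^k-z^*\|\le L_G\|z^k-z^*\|$.

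For assertion (1), I would bound $\|\tilde z^k-z^*\|$ by the residual. Writing $z^k-z^*=(z^k-\tilde z^k)+(\tilde z^k-z^*)$ and using the triangle inequality gives $\|z^k-z^*\|\le\|z^k-\tilde z^k\|+\|\tilde z^k-z^*\|$; substituting this into $\|\tilde z^k-z^*\|\le L_k\|z^k-z^*\|$ and solving for $\|\tilde z^k-z^*\|$ (here $L_k\le L_G<1$ is essential, so that $1-L_k$ stays bounded away from $0$) yields $\|\tilde z^k-z^*\|\le\frac{L_k}{1-L_k}\|z^k-\tilde z^k\|\le\frac{L_G}{1-L_G}\|z^k-\tilde z^k\|$. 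Setting $w^k:=c_k^{-1}(z^k-\tilde z^k)$, the definition of the resolvent gives $\tilde z^k\in T^{-1}(w^k)$ while $z^*\in T^{-1}(0)$, and the upper bound $c_k\le\varsigma$ lets me write $\|z^k-\tilde z^k\|=c_k\|w^k\|\le\varsigma\|w^k\|$. Thus $\|\tilde z^k-z^*\|\le a\|w^k\|$ with $a:=\frac{\varsigma L_G}{1-L_G}>0$. Finally $\|w^k\|\le\kappa^{-1}\|z^k-\tilde z^k\|\to0$, so $\|w^k\|\le\tau$ for all large $k$; this is exactly the relaxed Lipschitz property (\ref{relassumptA}) of $T^{-1}$ at the iterates $\{\tilde z^k\}$, proving (1).

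For assertion (2), the estimate $\|\tilde z^k-z^*\|\le L_k\|z^k-z^*\|$ plays precisely the role of Lemma \ref{neig} (exact case) or Lemma \ref{neiginexa} (inexact case), with $L_k$ in place of $a/\sqrt{a^2+c_k^2}$. I would then repeat the estimates in the proof of Theorem \ref{rel_PR} to obtain, for all large $k$, $\|z^{k+1}-z^*\|^2\le\bigl(1-\min(\gamma,2\gamma-\gamma^2)(1-L_k^2)\bigr)\|z^k-z^*\|^2$ in the exact case, and those in the proof of Theorem \ref{rel PRinexact} to obtain the analogous bound with factor $\theta_k=\frac{\sqrt{1-\min(\gamma,2\gamma-\gamma^2)(1-L_k^2)}+\gamma\delta_k}{1-\gamma\delta_k}$ in the inexact case. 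Because $\sup_kL_k=L_G<1$, the quantity $1-\min(\gamma,2\gamma-\gamma^2)(1-L_k^2)$ is bounded above by $1-\min(\gamma,2\gamma-\gamma^2)(1-L_G^2)<1$, and $\theta_k<1$ eventually since $\delta_k\to0$; this gives the claimed linear convergence. Equivalently, (2) follows from (1) together with Remarks \ref{relassuA} and \ref{relassuA-2}.

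The routine part is assertion (2), a direct replay of Section \ref{linearCon} with the symbol $L_k$ substituted for $a/\sqrt{a^2+c_k^2}$. The one genuinely new step, and the one I expect to require care, is assertion (1): translating the hypothesis, which controls $J_{c_kT}$ relative to the \emph{distance to the solution} $\|z^k-z^*\|$, into the residual-based estimate (\ref{relassumptA}) that the earlier remarks require. The two devices that make this work are the triangle-inequality inversion---legitimate precisely because $L_G<1$ keeps $1-L_k\ge1-L_G>0$---and the upper bound $c_k\le\varsigma$, which is exactly where the two-sided boundedness of $\{c_k\}$ (as opposed to merely $c_k\ge\kappa$ used in Section \ref{linearCon}) enters.
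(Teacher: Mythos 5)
Your proof is correct and takes essentially the same route as the paper: both establish assertion (1) by converting the hypothesis $L_G<1$ into the residual-based estimate $\|\tilde z^k-z^*\|\le a\|c_k^{-1}(z^k-\tilde z^k)\|$ at the iterates (using $c_k\le\varsigma$ and $\|z^k-\tilde z^k\|\to 0$ with $c_k\ge\kappa$ to ensure the threshold $\tau$ is met), and then deduce (2) by invoking Remarks \ref{relassuA} and \ref{relassuA-2}. The only difference is the algebra inside (1): the paper combines the reverse triangle inequality with Lemma \ref{proxity}(ii) to reach the modulus $a=\varsigma\sqrt{2L_G-L_G^2}/(1-L_G)$, whereas you invert the triangle inequality directly to get $a=\varsigma L_G/(1-L_G)$; both are valid.
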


\begin{proof}
%Recall that $\{z^k\}$ be generated by the exact version of the generalized PPA (\ref{RPPAscheme-ck}) or the inexact version (\ref{InexaGPPA}).
For a solution point of (\ref{maxsol}), $z^*$, we have $z^*=J_{c_k T}(z^*)$. Recall the notation $\tilde z^k=J_{c_kT}(z^k)$. Thus, it holds that
\begin{equation}\label{LGPR}
\|z^k-\tilde z^k\|=\|(z^k-z^*)-(J_{c_k T}(z^k)-J_{c_k T}(z^*))\| \ge\|z^k-z^*\|-\|J_{c_k T}(z^k)-J_{c_k T}(z^*)\| \ge(1-L_G)\|z^k-z^* \|,
\end{equation}
which implies
\begin{eqnarray}
\label{GLinter1}
\frac{1}{(1-L_G)^2}\|z^k-\tilde z^k\|^2\ge\|z^k-z^*\|^2\ge\|\tilde z^{k}-z^*\|^2+\|z^k- \tilde z^{k}\|^2.
\end{eqnarray}
Then, it follows from the above inequality and $0<c_k\le\varsigma$ that
\begin{eqnarray}\label{tmpnew}
\|\tilde z^k-z^*\|^2\le \frac{2L_G-L_G^2}{(1-L_G)^2}\|z^k-\tilde z^k\|^2\le\frac{2L_G-L_G^2}{(1-L_G)^2}\varsigma^2\|c_k^{-1}(z^k-\tilde z^k)\|^2.
\end{eqnarray}
%[{\bf it is not correct to write $\|T(\tilde z^k)-T(z^*)\|^2$ -- the distance of two sets!}]
According to Theorems \ref{rel_PR global} and \ref{new-2}, for the sequence $\{z^k\}$ generated by either the exact version (\ref{RPPAscheme-ck}) or the inexact version (\ref{InexaGPPA}), we have
  $\lim_{k\rightarrow \infty}\|z^k-\tilde z^k\|=0$.
Since $c_k \ge \kappa>0$, there exists an integer $\hat k$ such that
  $$\|c_k^{-1}(z^k-\tilde z^k)\|\le \kappa^{-1}\|z^k-\tilde z^k\|\le \tau\;\mbox{when} \;k>\hat k,$$
where $\tau>0$ is a given constant.
%Hence $\|T(\tilde z^k)\|\rightarrow 0$ when $k$ is large enough
%[{\bf Same problem!}].
Note the facts $\tilde z^k\in T^{-1}(c_k^{-1}(z^k-\tilde z^k))$ and $z^*\in T^{-1}(0)$.
Consequently, it follows from (\ref{tmpnew}) that $T^{-1}$ is Lipschitz continuous at all the iterates $\{\tilde z^k\}$ with modulus $a:=\frac{\varsigma\sqrt{2L_G-L_G^2}}{1-L_G}$ when $k$ is large enough.

Now, we prove (2). Indeed, as commented in Remarks \ref{relassuA} and \ref{relassuA-2}, the linear convergence of the schemes (\ref{RPPAscheme-ck}) and (\ref{InexaGPPA}) can be ensured since $T^{-1}$ is Lipschitz continuous at all the iterates $\{\tilde z^k\}$  with positive modulus
 when $k$ is sufficiently large and $\{c_k\}$ is below bounded. Thus, the assertion (2) is proved. The proof is complete.  $\hfill$
%From Remark \ref{relassuA}, we know that the relaxed {\bf Assumption A} holds.
\end{proof}
%\begin{remark}\label{remk3}
%Theorems \ref{rela} shows that the linear convergence of the schemes (\ref{RPPAscheme-ck}) and (\ref{InexaGPPA}) can be ensured under conditions more relaxed than ``$T^{-1}$ is Lipschitz continuous at $0$". In particular, if we take $c_k\equiv c$, the sufficient condition of ensure the linear convergence reduces to $L_G<1$ with $G_k\equiv G$.
%\end{remark}

%Theorems \ref{Thm-new-1} and \ref{rela} show that the assumption ``$T^{-1}$ is Lipschitz continuous at $0$" is sufficient, and can be further relaxed, to ensure the linear convergence rate for the  exact version (\ref{RPPAscheme-ck}) and inexact version of the generalized PPA (\ref{InexaGPPA}). An interesting corollary relevant to Theorem \ref{Thm-new-1} is stated below.

%{\color{blue}Therefore, theoretically it is still possible to relax the condition ``$T^{-1}$ is Lipschitz continuous at $0$ with positive modulus" in \cite{Rock76} for studying the linear convergence rate of PPA-based schemes, though such a analysis is not covered in this paper.}

So far, we have mentioned various conditions including  strongly convexity in \cite{CYuan}, the assumption in \cite{Rock76} and the one in Theorems \ref{Thm-new-1} and \ref{rela},  to ensure the linear convergence of the schemes (\ref{RPPAscheme-ck}) and (\ref{InexaGPPA}). In Figure  \ref{RelationCond}, we show their relationships for the special case where $c_k\equiv c$ for all $k$, which is clearly an interesting choice for implementing the schemes (\ref{RPPAscheme-ck}) and (\ref{InexaGPPA}).

\begin{figure}[t]
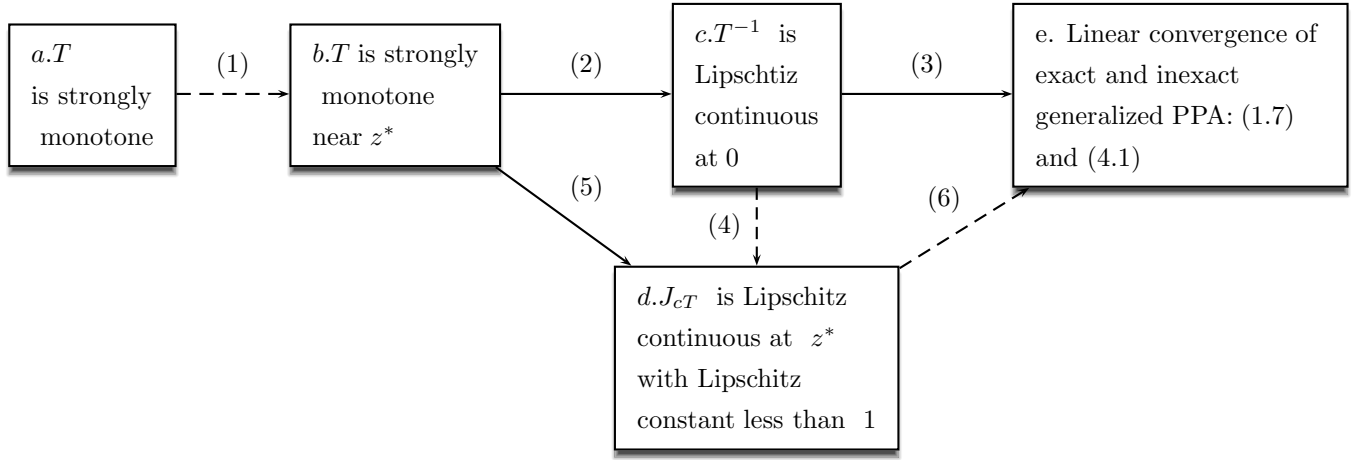

\centering
\psset{shadowcolor=black!70,shadowangle=-90,blur,shortput=nab}
\begin{psmatrix}[rowsep=1,colsep=1.5]
\psframebox[shadow=true]{$\begin{array}{l}
a. T\\ \hbox{is strongly}\\
\hbox{ monotone}
%\footnote{The defintion of coercive is the strongly convex in \cite{CYuan}}
\end{array}$}
&\psframebox[shadow=true]{$\begin{array}{l}
b. T \;\hbox{is strongly}\\
\hbox{ monotone } \\
 \hbox{near}\;  z^*\\
\end{array}$}
&\psframebox[shadow=true]{$\begin{array}{l}c. T^{-1}\ \ \hbox{is}\\
\hbox{Lipschtiz} \\
\hbox{continuous} \\ \hbox{at}\;0\end{array}$}
&\psframebox[shadow=true]{$\begin{array}{l}\hbox{e. Linear convergence of }\\
\hbox{exact and inexact }\\
\hbox{generalized PPA:}\; (\ref{RPPAscheme-ck})\\
 \hbox{and (\ref{InexaGPPA})}\end{array}$}
\\
&&\psframebox[shadow=true]{$\begin{array}{l}d. J_{cT}\ \ \hbox{is Lipschitz}\\
%\hbox{} \\
\hbox{continuous at} \ \ z^*\\
\hbox{with Lipschitz}\\
\hbox{constant less than}\ \ 1\end{array}$}
\end{psmatrix}
\ncline[arrows=->,linestyle=dashed]{1,1}{1,2}
\naput{(1)}
\ncline{->}{1,2}{1,3}
\naput{(2)}
\ncline{->}{1,3}{1,4}
\naput{(3)}
\ncline{->}{1,2}{2,3}
\naput{(5)}
\ncline[arrows=<-,linestyle=dashed]{2,3}{1,3}
\naput{(4)}
\ncline[arrows=->,linestyle=dashed]{2,3}{1,4}
\naput{(6)}
\caption{Relationships among different conditions for linear convergence of (\ref{RPPAscheme-ck}) and (\ref{InexaGPPA}).}\label{RelationCond}
\end{figure}

\section{Discuss on the Superlinear Convergence}\label{super}
\setcounter{equation}{0}

In \cite{Rock76}, under the assumption that`` $T^{-1}$ is Lipschitz continuous at $0$  with positive modulus", it was shown that the special case of (\ref{InexaGPPA}) with $\gamma=1$ is superlinearly convergent if the proximal parameter $c_k \to \infty$. See Theorem 2 in \cite{Rock76}. One may ask if we can extend the same superlinear convergence result to (\ref{InexaGPPA}) with a general $\gamma$ in $(0,2)$. In this section, we take a closer look at this issue and give a negative answer to this question. It is sufficient to just analyze the exact version (\ref{RPPAscheme-ck}) to answer this question.

Recall (\ref{contl}) and (\ref{varrho}). We have
\begin{eqnarray}
\label{tmp2}
\|z^{k+1}-z^*\|^2&\le& \left(1-\min(\gamma,2\gamma-\gamma^2)\frac{c_k^2}{a^2+c_k^2}\right)\|z^k-z^*\|^2.
                 \end{eqnarray}
As mentioned, this inequality is tight when the maximal monotone operator $T$ is defined as (\ref{opeT}).
Note that
$\min(\gamma,2\gamma-\gamma^2)\frac{c_k^2}{a^2+c_k^2} \to 1$ when $c_k \to \infty$ and $\gamma=1$. Moreover, we have
$$0<\min(\gamma,2\gamma-\gamma^2)<1 \;\hbox{and} \;\;0<\frac{c_k^2}{a^2+c_k^2}\le1 $$
when $\gamma \in (0,2)$ and $\gamma \ne 1$.
Thus, if $c_k \to \infty$, the coefficient in (\ref{tmp2}) goes to $0$ only when $\gamma=1$. This excludes the hope of establishing the superlinear convergence for the exact version of the generalized PPA (\ref{RPPAscheme-ck}) with $\gamma \ne 1$ even when $c_k \to \infty$.

\section{Application to ALM}
\label{ALM}

%[{\bf Is it possible to consider a relaxed ALM and its linear convergence (both inexact and inexact versions?)}]

Previously, we have discussed the linear convergence rates for the generalized PPA schemes (\ref{RPPAscheme-ck}) and (\ref{RPPAscheme-ck-inexact}) in the generic setting of (\ref{maxsol}) where $T$ is an abstract maximal operator. In this and next sections, we specify our discussion to some special convex minimization models and discuss the linear convergence rates for two important algorithms which can be obtained by specifying the exact version of the generalized PPA scheme (\ref{RPPAscheme-ck}). For succinctness, discussions for their inexact counterparts stemming from the inexact version (\ref{RPPAscheme-ck-inexact}) are omitted.

Let us first recall some known results and summarize them in the following two lemmas. The proof of the first lemma can be found in
%\cite{BG77} and
\cite{Rock70}, and the proof of the second is trivial.
%, respectively.

%\begin{lemma}
%Let $\partial g$ be the subdiffernetial of a convex function $g: \Re^n \to \Re$. Then $\partial g$ is $\frac{1}{\beta}$-Lipschitz if and only if $\partial g$ is $\beta$-firmly nonexpansive.
%\end{lemma}

\begin{lemma}
\label{StrLip}
Let $f: \Re^n \to \Re$ be closed, proper and convex. Then, we have
%\begin{eqnarray}
%\label{cong}
%f^*(y):=\sup_x \{\langle x,y\rangle-f(x)\}.
%\end{eqnarray}
%Then, we have
\begin{itemize}
\item[i)] If $f$ is $\mu_f$-strongly convex, then $f^*$ is differentiable and $\nabla f^*$ is $(1/\mu_f)$-Lipschitz continuous.
\item[ii)] If $f$ is differentiable and $\nabla f$ is $L_f$-Lipschitz continuous, then $f^*$ is $1/L_f$-strongly convex.
 %[{\bf Check it, your previous version has typo}]
\end{itemize}
\end{lemma}

\begin{lemma}\label{stco}
Let $f: \Re^n \to \Re$ be closed, proper and strongly convex; let $\partial f$ be the subdifferential of $f$. Then, $\partial f$ is strongly monotone.
\end{lemma}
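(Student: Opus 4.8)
The plan is to prove Lemma \ref{stco} by reducing the strong monotonicity of $\partial f$ to the definition of strong convexity of $f$ via the subgradient inequality. Suppose $f$ is $\mu_f$-strongly convex, meaning $f - \frac{\mu_f}{2}\|\cdot\|^2$ is convex. First I would write down the subgradient inequality for this convex function: for any $x, x' \in \Re^n$ and any $w \in \partial f(x)$, $w' \in \partial f(x')$, the shifted subgradients $w - \mu_f x$ and $w' - \mu_f x'$ belong to the subdifferential of the convex function $g := f - \frac{\mu_f}{2}\|\cdot\|^2$. Since the subdifferential of any closed proper convex function is monotone, I would apply monotonicity of $\partial g$ directly.

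The central computation is then to expand the monotonicity inequality for $\partial g$. Writing $\partial g(x) = \partial f(x) - \mu_f x$ (here I would invoke the standard sum rule for the subdifferential, valid because $\frac{\mu_f}{2}\|\cdot\|^2$ is a smooth convex function), monotonicity of $\partial g$ gives
\begin{equation}
\langle (w - \mu_f x) - (w' - \mu_f x'),\, x - x'\rangle \ge 0.
\end{equation}
Rearranging this inequality yields
\begin{equation}
\langle w - w',\, x - x'\rangle \ge \mu_f \|x - x'\|^2,
\end{equation}
which is exactly the statement that $\partial f$ is $\mu_f$-strongly monotone in the sense of Definition \ref{coer} with $\alpha = \mu_f$. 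This closes the argument.

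The main obstacle, such as it is, is the justification of the subdifferential sum rule $\partial\bigl(f - \frac{\mu_f}{2}\|\cdot\|^2\bigr) = \partial f - \mu_f x$. This is routine because $\frac{\mu_f}{2}\|\cdot\|^2$ is everywhere finite and differentiable, so the Moreau--Rockafellar additivity theorem applies without any constraint-qualification subtlety; indeed the problem statement itself characterizes the proof as trivial. An equally clean alternative, which I would mention as avoiding the sum rule altogether, is to argue directly from the definition: strong convexity gives $f(x') \ge f(x) + \langle w, x' - x\rangle + \frac{\mu_f}{2}\|x' - x\|^2$ for $w \in \partial f(x)$, and symmetrically with the roles of $x, x'$ swapped for $w' \in \partial f(x')$; adding the two inequalities and cancelling the symmetric function-value terms produces $\langle w - w', x - x'\rangle \ge \mu_f \|x - x'\|^2$ immediately. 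This second route is self-contained and is probably the form the authors intend.
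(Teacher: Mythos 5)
Your proposal is correct; note that the paper itself offers no proof of Lemma \ref{stco}, declaring it trivial, and your second, self-contained argument (adding the two strong-convexity subgradient inequalities $f(x') \ge f(x) + \langle w, x'-x\rangle + \frac{\mu_f}{2}\|x'-x\|^2$ and its symmetric counterpart) is exactly the standard one-line proof the authors have in mind. Your first route via the sum rule is also valid, though it is cleaner to apply Moreau--Rockafellar to $f = g + \frac{\mu_f}{2}\|\cdot\|^2$ (with $g$ convex) rather than to $f - \frac{\mu_f}{2}\|\cdot\|^2$ directly, since the subtracted quadratic is concave.
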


\subsection{Preliminaries of ALM}

First, we consider a canonical convex minimization model with linear constraints:
\begin{eqnarray}
\label{ConvexALM}
\begin{array}{cl}
\min& f(x)\\
s.t.    & Ax=b.
\end{array}
\end{eqnarray}
where $f: R^n \rightarrow (-\infty,\infty]$ is a closed and convex function, $A\in R^{m\times n}$ and $b \in R^m$.
For solving (\ref{ConvexALM}), a benchmark is the augmented Lagrangian method (ALM) originally proposed in \cite{Hes,Powell}. Its iterative scheme reads as
\begin{eqnarray}
\left\{\begin{array}{cl}
x^{k+1}&=\arg\min_x \{f(x)-\langle p^k, Ax\rangle+\frac{1}{2}c_k \|Ax-b\|^2\},\\
p^{k+1}&=p^k-c_k (Ax^{k+1}- b),
\end{array}
\right. \label{ALMS}
\end{eqnarray}
where $p^k$ is the Lagrange multiplier and $c_k>0$ is the penalty parameter of the linear constraints. As analyzed in \cite{Rock76}, the dual problem of (\ref{ConvexALM}) is
\begin{eqnarray}
\label{DConPALM}
\max_p \; \{-f^*(A^\top p)+\langle b, p\rangle \},
\end{eqnarray}
where ``*" denotes the conjugate of a function, see \cite{Rock76}. Thus, solving (\ref{DConPALM}) is equivalent to
\begin{eqnarray}
\label{Dule}
0\in S_{{\cal A}}(p):=A\cdot \partial f^*\cdot (A^\top p)-b,
\end{eqnarray}
which is a specific application of the generic setting of (\ref{maxsol}) with $T=S_{{\cal A}}$.
%where
%\begin{eqnarray}
%\label{operSA}S_{{\cal A}}(p)=A\cdot \partial f^*\cdot (A^\top p)-b.
%\end{eqnarray}
In \cite{Rock76MOR}, it was precisely analyzed that the ALM scheme (\ref{ALMS}) is an application of the PPA (\ref{PPA}) to the dual problem (\ref{Dule}). Also in \cite{Rock76}, it was mentioned that the mapping $S_{{\cal A}}(p)$ defined in (\ref{Dule}) is maximal monotone.
%[{\bf verify it}]

\subsection{A Generalized ALM}

Following the analysis in \cite{Rock76}, it is easy to see that if we apply the generalized PPA scheme (\ref{RPPAscheme-ck}) to (\ref{Dule}), we can obtain a generalized ALM scheme as follows
\begin{eqnarray}
\left\{\begin{array}{cl}
x^{k+1}&=\arg\min_x \{f(x)-\langle p^k, Ax\rangle+\frac{1}{2}c_k \|Ax-b\|^2\},\\
p^{k+1}&=p^k-\gamma c_k (Ax^{k+1}- b),
\end{array}
\right. \label{GALMS}
\end{eqnarray}
which differs from the original ALM (\ref{ALMS}) in that there is a parameter $\gamma\in(0,2)$ for updating the Lagrange multiplier $p^{k+1}$. The details are presented in the following theorem.

\begin{theorem}\label{GALM-exact}
The generalized ALM scheme (\ref{GALMS}) is an application of the exact version of the generalized PPA (\ref{RPPAscheme-ck}) to (\ref{Dule}).
\end{theorem}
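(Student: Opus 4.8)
The plan is to verify directly that applying the exact generalized PPA scheme (\ref{RPPAscheme-ck}) to the dual operator $T = S_{\mathcal A}$ defined in (\ref{Dule}) produces exactly the iteration (\ref{GALMS}). The key observation is that the resolvent step $\tilde p^k = J_{c_k S_{\mathcal A}}(p^k)$ is, by the representation lemma (Lemma \ref{repl}) and the definition of the resolvent (\ref{resolvent}), the unique point satisfying $p^k \in \tilde p^k + c_k S_{\mathcal A}(\tilde p^k)$, and I would show that this inclusion is solved precisely by the $x$-minimization subproblem in (\ref{GALMS}). Concretely, first I would unfold $0 \in S_{\mathcal A}(\tilde p^k) + c_k^{-1}(\tilde p^k - p^k)$, which by the form of $S_{\mathcal A}$ in (\ref{Dule}) reads $c_k^{-1}(p^k - \tilde p^k) \in A\,\partial f^*(A^\top \tilde p^k) - b$.

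Next I would introduce $x^{k+1} \in \partial f^*(A^\top \tilde p^k)$, so that the inclusion becomes $c_k^{-1}(p^k - \tilde p^k) = A x^{k+1} - b$, i.e. $\tilde p^k = p^k - c_k(A x^{k+1} - b)$. Using the conjugate duality relation $x \in \partial f^*(y) \iff y \in \partial f(x)$, the condition $A^\top \tilde p^k \in \partial f(x^{k+1})$ rewrites, after substituting $\tilde p^k = p^k - c_k(Ax^{k+1}-b)$, as the first-order optimality condition
\begin{eqnarray*}
0 \in \partial f(x^{k+1}) - A^\top p^k + c_k A^\top (A x^{k+1} - b),
\end{eqnarray*}
which is exactly the stationarity condition for the $x$-subproblem $x^{k+1} = \arg\min_x \{ f(x) - \langle p^k, Ax\rangle + \frac{1}{2} c_k \|Ax - b\|^2\}$ appearing in (\ref{GALMS}). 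This establishes $\tilde p^k = p^k - c_k(A x^{k+1} - b)$, the exact (un-relaxed) resolvent value.

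Finally I would apply the relaxation step of (\ref{RPPAscheme-ck}): $p^{k+1} = p^k - \gamma(p^k - \tilde p^k) = p^k - \gamma c_k (A x^{k+1} - b)$, which is precisely the multiplier update in (\ref{GALMS}). Thus the two schemes coincide, proving that (\ref{GALMS}) is the application of (\ref{RPPAscheme-ck}) to (\ref{Dule}). The main obstacle, though a mild one, is the careful handling of the subdifferential calculus and the conjugacy identity $x \in \partial f^*(y) \iff y \in \partial f(x)$ in the possibly nonsmooth, set-valued setting; one must confirm that the minimizer $x^{k+1}$ exists and that passing between the primal subproblem and the dual resolvent inclusion is legitimate for a general closed convex $f$ rather than assuming differentiability. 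Since $S_{\mathcal A}$ is maximal monotone (as noted after (\ref{Dule})), Lemma \ref{repl} guarantees the resolvent is single-valued and the decomposition is unique, so the correspondence between the primal and dual steps is well defined.
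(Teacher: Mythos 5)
Your proposal is correct and is essentially the paper's own argument run in the reverse direction: the paper starts from the optimality condition of the ALM $x$-subproblem, $A^\top \tilde p^k \in \partial f(x^{k+1})$, and deduces via conjugacy that $\tilde p^k = J_{c_k S_{\cal A}}(p^k)$, whereas you unfold the resolvent inclusion and recover the subproblem; the ingredients (first-order optimality, the conjugacy $x \in \partial f^*(y) \Leftrightarrow y \in \partial f(x)$, the definition of $S_{\cal A}$, the representation lemma, and the relaxation step) are identical. Since each step is an equivalence, the direction of traversal is immaterial and your version is equally valid.
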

\begin{proof}
First, the generalized ALM (\ref{GALMS}) can be rewritten as
\begin{eqnarray}
\left\{\begin{array}{cl}
x^{k+1}&=\arg\min_x \{f(x)-\langle p^k, Ax\rangle+\frac{1}{2}c_k \|Ax-b\|^2\},\\
\tilde p^{k}&=p^k- c_k (Ax^{k+1}- b),\\
p^{k+1}&=p^k-\gamma(p^k-\tilde p^k).
\end{array}
\right. \label{GALMSR}
\end{eqnarray}
Since the first-order optimality condition of the $x$-subproblem in (\ref{GALMSR}) is
$$A^\top(p^k-c_k(Ax^{k+1}-b))\in \partial f(x^{k+1}),$$
it follows from the second equation in (\ref{GALMSR}) that $A^\top \tilde p^{k}\in \partial f(x^{k+1})$. Then, we have
$$ Ax^{k+1}-b\in A\cdot \partial f^*\cdot (A^\top \tilde p^k)-b.$$
We thus conclude that
$$\tilde p^k=p^k-c_k(Ax^{k+1}-b)=p^k-c_k( A\cdot \partial f^*\cdot (A^\top \tilde p^k)-b)=p^k-c_k S_{{\cal A}}(\tilde p^k),$$
which implies that $\tilde p^k=J_{c_kS_{\cal A}}(p^k)$. Then, it follows from the last equation in (\ref{GALMSR}) that
$$p^{k+1}=p^k-\gamma (p^k-J_{c_kS_{\cal A}}(p^k)),$$
meaning that the generalized ALM scheme (\ref{GALMS}) is an application of (\ref{RPPAscheme-ck}) to (\ref{Dule}). The proof is complete.
\end{proof}

\subsection{Linear Convergence of ALM schemes}

Below, we show some conditions that can sufficiently ensure that the mapping  $S_{{\cal A}}^{-1}$ ($S_{{\cal A}}$ defined in (\ref{Dule})) is Lipschitz continuous at $0$ with positive modulus, and thus guarantee the linear convergence rate of the generalized ALM (\ref{GALMS}) (also the original ALM (\ref{ALMS}) if taking $\gamma=1$ in (\ref{GALMS})).

\begin{theorem}
\label{stronglipALM}
Let $S_{{\cal A}}$ be defined in (\ref{Dule}) and $\{p^k\}$ be the sequence generated by the generalized ALM scheme (\ref{GALMS}). For the model (\ref{ConvexALM}), if $f$ is convex and differentiable, $\nabla f$ is $L_f$-Lipschitz continuous, and the matrix $A$ is full row rank. Then, we have
\begin{itemize}
\item[(1)] The mapping $S_{{\cal A}}$ is strongly monotone.
\item[(2)] The mapping  $S_{{\cal A}}^{-1}$ exists and it is Lipschitz continuous at $0$ with positive modulus.
\item[(3)] The sequence $\{p^k\}$  converges linearly to a zero point of $S_{{\cal A}}$.
\end{itemize}
\end{theorem}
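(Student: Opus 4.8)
The plan is to establish the three assertions in order, since (2) follows from (1), and (3) follows from (2) together with the structural identification in Theorem~\ref{GALM-exact}. The only genuinely nontrivial step is assertion (1).

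For assertion (1), the strategy is to transfer strong monotonicity of $\partial f^*$ through the linear map $A$. First I would invoke Lemma~\ref{StrLip}(ii): since $\nabla f$ is $L_f$-Lipschitz continuous, $f^*$ is $(1/L_f)$-strongly convex; hence by Lemma~\ref{stco} the subdifferential $\partial f^*$ is $(1/L_f)$-strongly monotone. Now take $p_1,p_2$ and let $w_i\in S_{\mathcal A}(p_i)$, so that $w_i=Au_i-b$ with $u_i\in\partial f^*(A^\top p_i)$. Then
\begin{equation*}
\langle w_1-w_2,\,p_1-p_2\rangle=\langle u_1-u_2,\,A^\top(p_1-p_2)\rangle\ge\frac{1}{L_f}\|A^\top(p_1-p_2)\|^2.
\end{equation*}
Because $A$ is full row rank, $AA^\top$ is positive definite, so $\|A^\top(p_1-p_2)\|^2\ge\lambda_{\min}(AA^\top)\|p_1-p_2\|^2$ with $\lambda_{\min}(AA^\top)>0$. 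This yields strong monotonicity of $S_{\mathcal A}$ with modulus $\alpha:=\lambda_{\min}(AA^\top)/L_f>0$.

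For assertion (2), strong monotonicity immediately gives injectivity of $S_{\mathcal A}$ (hence $S_{\mathcal A}^{-1}$ is single-valued): if $w\in S_{\mathcal A}(p_1)\cap S_{\mathcal A}(p_2)$ then $0\ge\alpha\|p_1-p_2\|^2$. Existence of a zero point is guaranteed by the standing assumption $zer(S_{\mathcal A})\neq\emptyset$, so the unique solution $p^*$ required in Definition~\ref{invLipze} is well defined. Applying Cauchy--Schwarz to $\langle w_1-w_2,\,p_1-p_2\rangle\ge\alpha\|p_1-p_2\|^2$ with $p_i=S_{\mathcal A}^{-1}(w_i)$ gives $\|p_1-p_2\|\le\alpha^{-1}\|w_1-w_2\|$, i.e. $S_{\mathcal A}^{-1}$ is globally Lipschitz with modulus $1/\alpha$; specializing $w_2=0$ and $p_2=p^*$ verifies Definition~\ref{invLipze} at $0$ with modulus $a=1/\alpha$ (for any $\tau>0$).

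For assertion (3), I would simply combine Theorem~\ref{GALM-exact}, which shows that (\ref{GALMS}) is the exact generalized PPA (\ref{RPPAscheme-ck}) applied to (\ref{Dule}) with $T=S_{\mathcal A}$, together with Theorem~\ref{rel_PR}: since $S_{\mathcal A}^{-1}$ is Lipschitz continuous at $0$ with positive modulus by (2), and $\{c_k\}$ is bounded away from zero with $\gamma\in(0,2)$, the iterates $\{p^k\}$ converge linearly to the zero point of $S_{\mathcal A}$. The main obstacle is assertion (1)---specifically, confirming that strong monotonicity of $\partial f^*$ survives composition with $A$ on both sides, which is exactly where the full-row-rank hypothesis enters, to bound $\|A^\top(\cdot)\|$ below. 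The set-valuedness of $\partial f^*$ causes no difficulty, since the strong-monotonicity inequality holds for all selections $u_i\in\partial f^*(A^\top p_i)$.
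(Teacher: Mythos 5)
Your proposal is correct and follows essentially the same route as the paper: assertion (1) via Lemma \ref{StrLip}(ii) and Lemma \ref{stco} to get $(1/L_f)$-strong monotonicity of $\partial f^*$, then the full-row-rank bound $\|A^\top(p_1-p_2)\|^2\ge\lambda_{\min}(AA^\top)\|p_1-p_2\|^2$; assertion (3) by combining Theorem \ref{GALM-exact} with Theorem \ref{rel_PR}, exactly as the paper does. The only difference is that you spell out assertion (2) (injectivity from strong monotonicity, existence from the standing assumption $zer(T)\neq\emptyset$, and the Cauchy--Schwarz step giving modulus $1/\alpha$), which the paper dismisses as obvious---a harmless and indeed welcome elaboration.
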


\begin{proof} (1) Since $f$ is differentiable and $\nabla f$ is $L_f$-Lipschitz continuous, it follows from Property (ii) of Lemma \ref{StrLip} that $f^*$ is $\frac{1}{L_f}$-strongly convex.
Then, it follows from Lemma \ref{stco} that $\partial f^*$ is $\frac{1}{L_f}$-strongly monotone. For any $p,p' \in R^m$; $w\in  S_{{\cal A}} (p)$ and $w'\in S_{{\cal A}} (p')$; there exist $u\in\partial f^*\cdot (A^\top p)$ and $u'\in\partial f^*\cdot (A^\top p')$ such that $w=Au$ and $w'=Au'$. We thus have
$$
 \langle w- w',p-p'  \rangle =\langle A ( u-u'),p-p'\rangle =\langle u-u',A^\top (p - p')\rangle
 \ge\frac{1}{L_f}\|A^\top (p-p')\|^2 \ge\frac{1}{L_f}\lambda_{\min}({AA^\top})\|p-p'\|^2,
 $$
  %\begin{eqnarray*}
% \begin{array}{rl}
% \langle w- w',p-p'  \rangle &=\langle A ( u-u'),p-p'\rangle\\
% &=\langle u-u',A^\top (p - p')\rangle\\
% %&= \langle  A\cdot \partial f^*\cdot (A^\top p)- A\cdot \partial f^*\cdot (A^\top p'),p-p'\rangle \\
% %&= \langle  \partial f^*\cdot (A^\top p)- \partial f^*\cdot (A^\top p'),A^\top p-A^\top p'\rangle
%%\\
% &\ge\frac{1}{L_f}\|A^\top (p-p')\|^2\\
% &\ge\frac{1}{L_f}\lambda_{\min}({AA^\top})\|p-p'\|^2,
% \end{array}
% \end{eqnarray*}
where the first inequality is because of the $\frac{1}{L_f}$-strongly convex of $\partial f^*$, and $\lambda_{\min}({AA^\top })$ is the minimal eigenvalue of $AA^\top $ with $\lambda_{\min}({AA^\top })>0$ because $A$ is assumed to be full row rank. Thus, it follows from Definition \ref{coer} that the mapping $S_{{\cal A}}$ is strongly monotone.

(2) This is obvious based on Definitions \ref{coer} and \ref{invLipze}. (3) This is just a conclusion of Theorem \ref{rel_PR} with $T=S_{{\cal A}}$ and the second assertion. The proof is complete.
%Now, we prove (2). Let $\{(x^k,p^k)\}$ be generated by the generalized ALM (\ref{GALMS}). According to Theorem \ref{GALM-exact}, the sequence $p^k$ can be written as
%$$p^{k}=p^{k-1}-\gamma (p^{k-1}-J_{c_kS_{\cal A}}(p^{k-1})).$$
%Therefore, applying Theorem \ref{rel_PR} with $T=S_{{\cal A}}$, we have that the sequence $\{p^k\}$ converges linearly when the mapping $S_{{\cal A}}^{-1}$ is Lipschitz continuous at zero  with positive modulus. The proof is complete.
$\hfill$
%For the inexact schemes (\ref{GinALMS}) and (\ref{GinALMSII}), we can derive the linear convergence of $\{p^k\}$  by applying
%Theorem \ref{rel PRinexact} with $T=S_{\cal A}$ and Lemma \ref{inexALMIvsII} under the same assumption.
\end{proof}

\section{Application to ADMM}
\label{ADMMChar}
In this section, we consider another convex minimization model with a separable objective function:
\begin{eqnarray}
\label{ConP}
\begin{array}{cl}
\min_{x}& \{f(x)+g(Mx)\}
\end{array}
\end{eqnarray}
where $f: R^n \rightarrow (-\infty,\infty]$ and $g: R^m\rightarrow (-\infty,\infty]$ are closed and convex functions, and the matrix $M\in R^{m\times n}$.
Again, we only focus on the specification of the exact version of the generalized PPA (\ref{RPPAscheme-ck}) and discuss how to ensure its linear convergence rate for the particular convex minimization context (\ref{ConP}).

\subsection{Preliminaries of ADMM} \label{preadmm}

One particular case of (\ref{ConP}) with a wide range of applications is where the functions $f$ and $g$ have their own properties and it is necessary to treat them individually in algorithmic design. For this purpose, we can reformulate (\ref{ConP}) as
\begin{eqnarray}
\label{GConP}
\begin{array}{cl}
\min & f(x)+g(w)\\
s.t.   & Mx=w,
\end{array}
\end{eqnarray}
where $w \in R^m$ is an auxiliary variable. For solving (\ref{GConP}), a benchmark is the ADMM scheme originally proposed in \cite{GM}. The ADMM scheme for (\ref{GConP}) reads as
\begin{eqnarray}
\left\{\begin{array}{cl}
x^{k+1}&=\arg\min_x \{f(x)+\langle p^k, Mx\rangle+\frac{1}{2}\lambda \|Mx-w^k\|^2\},\\
w^{k+1}&=\arg\min_w \{g(w)-\langle p^k, w\rangle +\frac{1}{2}\lambda \|Mx^{k+1}-w\|^2,\\
p^{k+1}&=p^k+\lambda (Mx^{k+1}- w^{k+1}),
\end{array}
\right. \label{ADMMS}
\end{eqnarray}
where $p^k$ is the Lagrange multiplier and $\lambda>0$ is a penalty parameter of the linear constraints in (\ref{GConP}).
%[{\bf Here we can only consider $\lambda$, not $\lambda_k$ as before? It is better to be consistent}]

Next, we recall some results in \cite{EcBe,Gabay83} to demonstrate that the ADMM is indeed a special case of the PPA (\ref{PPAscheme}). All the details can be found in \cite{EcBe}. First, the dual of (\ref{GConP}) is
\begin{eqnarray}
\label{DuaP}
\max_{p\in R^m} - (f^*(-M^\top p)+g^*(p))
\end{eqnarray}
where ``$f^*$" and ``$g^*$" denote the conjugate of the convex functions $f$ and $g$, respectively. Let
\begin{eqnarray}
\label{operABnew}
{\cal A}:=\partial [f^*\cdot (-M^\top)] \;\mbox{and} \;{\cal B}:=\partial g^*.
\end{eqnarray}
As shown in \cite{Rock70}, both ${\cal A}$ and ${\cal B}$ defined in (\ref{operABnew}) are maximal monotone. Then, (\ref{DuaP}) can be written as
\begin{eqnarray}
\label{ABsol}
0\in {\cal A}(p)+{\cal B}(p).
\end{eqnarray}
We use $J_{\lambda A}$ and $J_{\lambda B}$ to denote the resolvent operators of ${\cal A}$ and ${\cal B}$, respectively. Moreover, we denote
\begin{eqnarray}
\label{operG}
G_{\lambda,{\cal A},{\cal B}}=J_{\lambda {\cal A}}(2J_{\lambda {\cal B}} - I)+(I-J_{\lambda{\cal B}})
\end{eqnarray}
and
\begin{eqnarray}
\label{operS}
S_{\lambda,{\cal A},{\cal B}}:=G_{\lambda,{\cal A},{\cal B}}^{-1} - I.
\end{eqnarray}
As shown in \cite{EcBe}, $S_{\lambda,{\cal A},{\cal B}}$ is maximal monotone when ${\cal A}$ and ${\cal B}$ are both maximal monotone. Indeed, the definition of $S_{\lambda,{\cal A},{\cal B}}$ can be expressed as
\begin{equation}\label{operSC}
S_{\lambda,{\cal A},{\cal B}}=\{(v+\lambda b,u-v)|(u,b)\in{\cal B},(v,a)\in {\cal A}, v+\lambda a=u-\lambda b\},
\end{equation}
where ${\cal A}$ and ${\cal B}$ are defined in (\ref{operABnew}). Moreover, let $p^*$ be an solution point of (\ref{ABsol}) and $z^*$ a solution point of
\begin{eqnarray}\label{ADMMPPAP} 0\in S_{\lambda,{\cal A},{\cal B}}(z),\end{eqnarray}
and let the sequence $\{z^k\}$ be iteratively represented by
\begin{eqnarray}
\label{DRz}
z^{k+1}= J_{\lambda {\cal A}}\big ( (2J_{\lambda {\cal B}} - I)(z^k)\big)+(I-J_{\lambda{\cal B}})(z^k).
\end{eqnarray}
Indeed, (\ref{DRz}) is exactly the application of the Douglas-Rachford splitting method (DRSM) in \cite{DR56,LM} to (\ref{ABsol}). According to \cite{EcBe}, we know some conclusions such as: (1) If $z^*$ is a solution point of (\ref{ADMMPPAP}), then we have $p^*:=J_{\lambda {\cal B}}(z^*)$ is a solution point of (\ref{ABsol}); and (2) If $p^*$ is a solution point of (\ref{ABsol}) and $(x^*,w^*)$ is a solution point of (\ref{GConP}), then we have $x^*\in \partial f^*\cdot( -M^\top p^*)$ and $w^*\in \partial g^*(p^*)$.

Applying the scheme (\ref{RPPAscheme-ck}) with $T=S_{\lambda,{\cal A},{\cal B}}$, we obtain the exact version of the generalized PPA scheme
\begin{eqnarray}
\label{relaxPR}
z^{k+1}= z^k - \gamma (z^k - J_{S_{\lambda, {\cal A},{\cal B}}}(z^k))\;\hbox{with}\; \gamma\in(0,2).
\end{eqnarray}
%\begin{eqnarray}
%\label{relaxPR-inexact}
%z^{k+1} \approx z^k - \gamma (z^k - J_{S_{\lambda, {\cal A},{\cal B}}}(z^k))\;\; \gamma\in(0,2).
%\end{eqnarray}
Indeed, via (\ref{relaxPR}), the following exact version of the generalized ADMM scheme proposed in \cite{EcBe} can be recovered
\begin{eqnarray}
\left\{\begin{array}{cl}
x^{k+1}&=\arg\min_x \{f(x)+\langle p^k, Mx\rangle+\frac{1}{2}\lambda \|Mx-w^k\|^2\},\\
w^{k+1}&=\arg\min_w \{g(w)-\langle p^k, w\rangle +\frac{1}{2}\lambda \|\gamma Mx^{k+1}+(1-\gamma) w^k-w\|^2\},\\
p^{k+1}&=p^k+\lambda (\gamma Mx^{k+1}+(1-\gamma) w^k- w^{k+1}).
\end{array}
\right. \label{RADMMS}
\end{eqnarray}
In the following, we elucidate the relationship between the sequence $\{(x^k,w^k,p^k)\}$ generated by the generalized ADMM (\ref{RADMMS}) and $\{z^k\}$ represented by (\ref{relaxPR}); and demonstrate that the generalized ADMM (\ref{RADMMS}) can be written compactly as (\ref{relaxPR}). The
following lemma also clearly shows that the generalized ADMM (\ref{RADMMS}) is an application of the generalized PPA (\ref{RPPAscheme-ck}) with $T=S_{\lambda,{\cal A},{\cal B}}$ and $c_k\equiv1$ to (\ref{ADMMPPAP}).
%We first recall some known results to analyze the linear convergence rate for the generalized ADMM (\ref{RADMMS}).
%Note that the original ADMM (\ref{ADMMS}) is a special case
%of the generalized ADMM (\ref{RADMMS}) with $\gamma=1$.
%Thus, the conclusion for original ADMM follows directly from the generalized ADMM.

%\begin{lemma}
%Let the operators ${\cal A}$, ${\cal B}$ and $S_{\lambda,{\cal A},{\cal B}}$ are defined in (\ref{operABnew}) and (\ref{operS}), respectively. Let $\{(x^k,w^k,p^k)\}$ be generated by the ADMM (\ref{RADMMS}) and $\{q^k\}$ be given in (\ref{qk}). Then, the operator $S_{\lambda,{\cal A},{\cal B}}^{-1}$
%is characterized by
%\begin{eqnarray}
%\label{Sope_admm}
%S_{\lambda,{\cal A},{\cal B}}^{-1}=\{(p^k-q^k,q^k+\lambda w^k)| (p^k,w^k)\in{\cal B}, (q^k,-Mx^{k+1})\in {\cal A}, q^k+\lambda (-M x^{k+1})=p^k-\lambda w^k\},\nn\\
%\end{eqnarray}
%\end{lemma}
%\begin{proof}
%Based on the optimality condition of (\ref{RADMMSEQ}) and the definition of $q_k$ in (\ref{qk}),
%it holds that
%\begin{eqnarray}
%\label{opti}
%\begin{array}{rl}
%-Mx^{k+1}&\in {\cal A}(q^k)\\
%\tilde w^{k+1}&\in {\cal B}(\tilde p^{k+1}).
%\end{array}
%\end{eqnarray}
%Then, using (\ref{qk}) and the definition of $S_{\lambda,{\cal A},{\cal B}}$ in (\ref{operS}), the assertion is proved.
%\end{proof}
%\vspace{0.5cm}

\begin{lemma}
\label{relPRRADMM}
Let $\{(x^k,w^k,p^k)\}$ be generated by the generalized ADMM (\ref{RADMMS}) and $\{z^k\}$ be represented by (\ref{relaxPR}); the operator $S_{\lambda,{\cal A},{\cal B}}$ be defined in (\ref{operS}). Assume that the initial points satisfy with $z^0=p^0+\lambda w^0$  and $p^0=J_{\lambda \cal B}(z^0)$.
Then, it holds that $z^k =p^k+\lambda w^k$ and $p^k=J_{\lambda \cal B}(z^k)$ for all iterates.
\end{lemma}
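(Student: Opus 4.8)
The plan is to prove the two identities $z^k = p^k + \lambda w^k$ and $p^k = J_{\lambda\mathcal{B}}(z^k)$ simultaneously by induction on $k$. The base case $k=0$ is exactly the assumed initialization $z^0 = p^0 + \lambda w^0$ and $p^0 = J_{\lambda\mathcal{B}}(z^0)$, so there is nothing to prove there. For the inductive step, I would assume both identities hold at iteration $k$ and deduce them at iteration $k+1$, working entirely from the subproblem optimality conditions of the generalized ADMM \eqref{RADMMS} and the explicit form of the resolvents of $\mathcal{A}$ and $\mathcal{B}$ defined in \eqref{operABnew}.

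The key is to translate each step of \eqref{RADMMS} into the language of the operators $\mathcal{A}$ and $\mathcal{B}$. First I would write down the first-order optimality condition for the $w$-subproblem, which yields a relation of the form $p^k + \lambda(\gamma Mx^{k+1} + (1-\gamma)w^k - w^{k+1}) \in \partial g(w^{k+1})$; combined with the $p$-update this gives $p^{k+1}\in \partial g(w^{k+1})$, and via the conjugate correspondence (i.e. $\mathcal{B} = \partial g^*$) this is equivalent to $w^{k+1}\in \mathcal{B}(p^{k+1})$. Next, using the representation lemma (Lemma \ref{repl}) for $\mathcal{B}$ applied to $z^{k+1}:=p^{k+1}+\lambda w^{k+1}$, the pair $(p^{k+1}, w^{k+1})$ with $w^{k+1}\in\mathcal{B}(p^{k+1})$ is the unique decomposition of $z^{k+1}$ as $x + \lambda y$ with $y\in\mathcal{B}(x)$; hence by the definition of the resolvent $J_{\lambda\mathcal{B}} = (I+\lambda\mathcal{B})^{-1}$ we get $p^{k+1} = J_{\lambda\mathcal{B}}(z^{k+1})$, which is one of the two identities. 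This simultaneously forces the natural definition $z^{k+1} = p^{k+1}+\lambda w^{k+1}$, giving the other identity, provided I can show this $z^{k+1}$ coincides with the one generated by \eqref{relaxPR}.

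The remaining and more delicate part is therefore to verify that $z^{k+1} := p^{k+1}+\lambda w^{k+1}$ indeed equals $z^k - \gamma(z^k - J_{\lambda\mathcal{A},\mathcal{B}}(z^k))$, i.e. that the generalized ADMM iterate matches the generalized PPA iterate \eqref{relaxPR}. To do this I would use the inductive hypothesis $z^k = p^k + \lambda w^k$ with $p^k = J_{\lambda\mathcal{B}}(z^k)$, from which $\lambda w^k = z^k - J_{\lambda\mathcal{B}}(z^k) = (I - J_{\lambda\mathcal{B}})(z^k)$, and then compute $(2J_{\lambda\mathcal{B}}-I)(z^k) = 2p^k - z^k = p^k - \lambda w^k$. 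Feeding this into the optimality condition of the $x$-subproblem and identifying $A^\top(\cdots)\in\partial f(x^{k+1})$ with the resolvent $J_{\lambda\mathcal{A}}$ applied to $(2J_{\lambda\mathcal{B}}-I)(z^k)$ (exactly as in the ALM computation of Theorem \ref{GALM-exact}), I would express $J_{\lambda\mathcal{A}}\bigl((2J_{\lambda\mathcal{B}}-I)(z^k)\bigr)$ in terms of $Mx^{k+1}$ and $p^k$. Substituting these resolvent expressions into the definition $G_{\lambda,\mathcal{A},\mathcal{B}} = J_{\lambda\mathcal{A}}(2J_{\lambda\mathcal{B}}-I)+(I-J_{\lambda\mathcal{B}})$ and recalling $S_{\lambda,\mathcal{A},\mathcal{B}} = G_{\lambda,\mathcal{A},\mathcal{B}}^{-1}-I$, the relaxation update \eqref{relaxPR} should collapse to exactly $p^{k+1}+\lambda w^{k+1}$ after algebraic simplification using the $p$-update and $w$-update of \eqref{RADMMS}.

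The main obstacle I anticipate is the bookkeeping in the last paragraph: correctly matching the operator-valued resolvent expressions $J_{\lambda\mathcal{A}}$ and $J_{\lambda\mathcal{B}}$ to the concrete primal quantities $Mx^{k+1}$, $w^{k+1}$, and $p^{k+1}$, keeping the relaxation factor $\gamma$ in the right places, and ensuring the argument $(2J_{\lambda\mathcal{B}}-I)(z^k)$ is threaded consistently through $\mathcal{A}=\partial[f^*\cdot(-M^\top)]$. The conjugacy manipulations $A^\top\tilde p\in\partial f(x^{k+1})\Leftrightarrow x^{k+1}\in\partial f^*(-M^\top\cdot)$ must be handled with care regarding signs, since $\mathcal{A}$ involves $-M^\top$. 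Once these identifications are set up cleanly, the induction closes and the lemma follows; this is essentially the detailed verification, already outlined in \cite{EcBe}, that the generalized ADMM \eqref{RADMMS} is the generalized PPA applied to \eqref{ADMMPPAP}.
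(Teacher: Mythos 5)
Your proposal is correct and follows essentially the same route as the paper: an induction maintaining both invariants, with the $x$-subproblem optimality condition identifying $J_{\lambda\mathcal{A}}\bigl((2J_{\lambda\mathcal{B}}-I)(z^k)\bigr)$ with the quantity $p^k+\lambda(Mx^{k+1}-w^k)$ (the paper's auxiliary $q^k$), the identity $J_{S_{\lambda,\mathcal{A},\mathcal{B}}}=G_{\lambda,\mathcal{A},\mathcal{B}}$, and the $p$-update closing the algebra. If anything, your plan is slightly more complete than the paper's write-up, which verifies $z^{k+1}=p^{k+1}+\lambda w^{k+1}$ explicitly but leaves the propagation of $p^{k+1}=J_{\lambda\mathcal{B}}(z^{k+1})$ (via the $w$-subproblem optimality and the representation lemma) implicit, exactly the step you spell out.
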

%{{\color blue} To my opinion, it was not  verified in \cite{EcBe}.}

\begin{proof} The proof is mainly inspired by Theorem 8 in \cite{EcBe}. We provide the proof for completeness. First, we introduce an auxiliary sequence $\{q^k\}$  as
\begin{eqnarray}
\label{qk}
q^k=p^k+\lambda(M x^{k+1}-w^k).
\end{eqnarray}
Assume that $z^k=p^k+\lambda w^k$, in the following we show that $z^{k+1}=p^{k+1}+\lambda w^{k+1}$.
First, denote $\tilde z^k=J_{S_{\lambda,{\cal A},{\cal B}}}(z^k)$.
Since $J_{\lambda {\cal B}}(z^k)=p^k$, it yields  that $z^k=p^k+\lambda w^k$ and $w^k\in{\cal B}(p^k)$. Then, we have
$$
J_{\lambda {\cal A}}(2J_{\lambda {\cal B}}-I)(z^k)=J_{\lambda {\cal A}}(2p^k-z^k)=J_{\lambda {\cal A}}(p^k-\lambda w^k)=J_{\lambda {\cal A}}(q^k-Mx^{k+1})=q^k,
$$
where the second equality follows from $z^k=p^k+\lambda w^k$, the third follows from the definition of $q^k$ (\ref{qk}), and the last comes from $-Mx^{k+1}\in {\cal A}(q^k)$.
We thus have
\begin{eqnarray*}
\tilde z^{k}&=&J_{\lambda {\cal A}}(2J_{\lambda {\cal B}}-I)(z^k)+(I-J_{\lambda {\cal B}})(z^k)=q^k+z^k-p^k =q^k+\lambda w^k.
\end{eqnarray*}
Then, we have
\begin{eqnarray*}
z^{k+1}&=&z^k-\gamma(z^k-\tilde z^k) =p^k+\lambda w^k-\gamma(p^k-q^k)\nn\\
       &=&p^k+\lambda w^k-\gamma (-\lambda M x^{k+1}+\lambda w^k)\nn\\
       &=&p^k+\lambda(\gamma Mx^{k+1}+(1-\gamma)w^k)\nn\\
       &=&p^{k+1}+\lambda w^{k+1},
\end{eqnarray*}
where the second equality follows from the fact $z^k=p^k+\lambda w^k$, the third is because of the definition of $q^k$ (\ref{qk}) and the last comes from the update scheme of $p^{k+1}$ in (\ref{RADMMS}). The proof is complete. $\hfill$
\end{proof}

Finally, let us first present a lemma; its proof can be found in \cite{EcBe}.

\begin{lemma}\label{operGPR}
The operator $G_{\lambda,{\cal A}, {\cal B}}$ defined in (\ref{operG}) is firmly nonexpansive and it satisfies
%[{\bf If $G_{\lambda, {\cal A},{\cal B}}$ is not single-values, the following writing is not good, you need to change it as what I write for Theorem 3.2}]
\begin{eqnarray}
\label{contrGoper}
 \langle G_{\lambda,{\cal A}, {\cal B}}(z)-G_{\lambda,{\cal A}, {\cal B}}(z'), z-z'\rangle &\ge& \|G_{\lambda,{\cal A}, {\cal B}}(z)-
G_{\lambda,{\cal A}, {\cal B}}(z')\|^2\nn\\
&&+\langle (I-J_{\lambda\cal B})(z)- (I - J_{\lambda \cal B}(z'), J_{\lambda \cal B}(z)-J_{\lambda \cal B}(z'))\rangle, \;\forall \; z,z'\in H.
\end{eqnarray}
\end{lemma}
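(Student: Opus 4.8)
The plan is to prove the displayed inequality (\ref{contrGoper}) first and then read off firm nonexpansiveness as an immediate corollary. Indeed, the second term on the right-hand side of (\ref{contrGoper}) is exactly $\langle (I-J_{\lambda {\cal B}})(z) - (I-J_{\lambda {\cal B}})(z'),\, J_{\lambda {\cal B}}(z) - J_{\lambda {\cal B}}(z')\rangle$, which is nonnegative by Property (i) of Lemma \ref{proxity} applied with $c=\lambda$ and $T={\cal B}$. Hence, once (\ref{contrGoper}) is established, discarding that term leaves $\langle G_{\lambda,{\cal A},{\cal B}}(z) - G_{\lambda,{\cal A},{\cal B}}(z'),\, z-z'\rangle \ge \|G_{\lambda,{\cal A},{\cal B}}(z) - G_{\lambda,{\cal A},{\cal B}}(z')\|^2$, which is precisely Definition \ref{firm-nonexp}. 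So the real content of the lemma is the single inequality (\ref{contrGoper}).

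To prove (\ref{contrGoper}), I would introduce shorthand for the intermediate images. For the fixed pair $z,z'$, set $a = J_{\lambda {\cal B}}(z)$, $b = (2J_{\lambda {\cal B}} - I)(z) = 2a-z$, and $c = J_{\lambda {\cal A}}(b)$, with the analogous primed quantities, and write $\Delta z = z-z'$, $\Delta a = a-a'$, $\Delta b = b-b'$, $\Delta c = c-c'$. By the definition (\ref{operG}) one has $G_{\lambda,{\cal A},{\cal B}}(z) = c + (z-a)$, so the increment of $G$ is $\Delta G := \Delta c + \Delta z - \Delta a$, and the increments satisfy the single linear relation $\Delta b = 2\Delta a - \Delta z$. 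Since ${\cal A}$ and ${\cal B}$ are maximal monotone, both resolvents $J_{\lambda {\cal A}}$ and $J_{\lambda {\cal B}}$ are firmly nonexpansive, which in increment form reads $\langle \Delta c, \Delta b\rangle \ge \|\Delta c\|^2$ and $\langle \Delta a, \Delta z\rangle \ge \|\Delta a\|^2$.

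The core computation is to expand $\langle \Delta G, \Delta z\rangle - \|\Delta G\|^2$ using $\Delta G = \Delta c + \Delta z - \Delta a$ and then regroup. I expect the $\|\Delta z\|^2$ terms to cancel and the remaining six inner-product and squared-norm terms to recombine, via the substitution $\Delta b = 2\Delta a - \Delta z$, so that the whole expression reduces to $\langle \Delta c, \Delta b\rangle - \|\Delta c\|^2$ together with the cross term $\langle \Delta z - \Delta a, \Delta a\rangle$ that is exactly the leftover quantity appearing on the right-hand side of (\ref{contrGoper}). In other words, (\ref{contrGoper}) should collapse to the bare firm nonexpansiveness of $J_{\lambda {\cal A}}$, namely $\langle \Delta c, \Delta b\rangle \ge \|\Delta c\|^2$, which holds automatically; interestingly, the firm nonexpansiveness of $J_{\lambda {\cal B}}$ is needed only at the final step through Lemma \ref{proxity}(i), not in the derivation of (\ref{contrGoper}) itself. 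The only obstacle is the algebraic bookkeeping in this expansion: the challenge is to keep track of the several terms so that their recombination into $\langle \Delta c, 2\Delta a - \Delta z\rangle = \langle \Delta c, \Delta b\rangle$ is transparent. There is no conceptual difficulty beyond recognizing the linear identity $\Delta b = 2\Delta a - \Delta z$ as the mechanism that converts the raw expansion into a statement about the resolvent $J_{\lambda {\cal A}}$.
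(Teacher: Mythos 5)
Your algebra does close exactly as you predict. With your notation, writing $\Delta G=\Delta c+\Delta z-\Delta a$, a direct expansion gives
\begin{align*}
\langle \Delta G,\Delta z\rangle-\|\Delta G\|^2-\langle \Delta z-\Delta a,\;\Delta a\rangle
&=\langle \Delta c,\,2\Delta a-\Delta z\rangle-\|\Delta c\|^2
=\langle \Delta c,\Delta b\rangle-\|\Delta c\|^2\;\ge\;0,
\end{align*}
the last inequality being firm nonexpansiveness of $J_{\lambda{\cal A}}$ (equivalently, Lemma \ref{proxity}(i) for ${\cal A}$ applied at $b,b'$); the $\|\Delta z\|^2$, $\|\Delta a\|^2$ and $\langle\Delta z,\Delta a\rangle$ terms all cancel as you anticipated. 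This proves (\ref{contrGoper}), and discarding the nonnegative ${\cal B}$-term via Lemma \ref{proxity}(i) yields firm nonexpansiveness of $G_{\lambda,{\cal A},{\cal B}}$. So the proposal is correct.

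One thing to note: the paper gives no proof of this lemma at all --- it simply defers to \cite{EcBe}. The argument there is phrased through the representation lemma (Lemma \ref{repl}): one writes $z=u+\lambda b$ with $b\in{\cal B}(u)$ and $G_{\lambda,{\cal A},{\cal B}}(z)=v+\lambda b$ where $v+\lambda a=u-\lambda b$, $a\in{\cal A}(v)$, and the quantity $\langle G(z)-G(z'),z-z'\rangle-\|G(z)-G(z')\|^2$ collapses to $\lambda\langle v-v',a-a'\rangle+\lambda\langle u-u',b-b'\rangle$; the second summand is precisely the retained term in (\ref{contrGoper}) and the first is nonnegative by monotonicity of ${\cal A}$. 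Your derivation is the resolvent-increment avatar of that same computation (firm nonexpansiveness of $J_{\lambda{\cal A}}$ is equivalent to monotonicity of ${\cal A}$), and both routes expose the same structure: (\ref{contrGoper}) is exactly the statement that the ${\cal A}$-part is monotone, with the ${\cal B}$-part kept explicit. The small advantage of your version is that it is self-contained within the paper's stated toolkit --- it needs only Lemma \ref{proxity} and the definition (\ref{operG}), never the representation lemma or explicit graph elements of ${\cal A}$ and ${\cal B}$.
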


\subsection{When Does the Assumption Hold?}

Based on our previous analysis, it is clear that the linear convergence of the generalized ADMM (\ref{RADMMS}) can be ensured by the assumption ``The mapping $S_{\lambda,{\cal A},{\cal B}}^{-1}$ ($S_{\lambda,{\cal A},{\cal B}}$ defined in (\ref{operS})) exists and it is Lipschitz continuous at $0$ with positive modulus". When the specific model (\ref{ConP}) is considered, it is interesting to discern sufficient conditions that can ensure this assumption and thus guarantee the linear convergence of the generalized ADMM scheme (\ref{RADMMS}); this is the main purpose of this subsection. We also refer to, e.g., \cite{Boley,DengYin,HY-SINUM} for discussions on the linear convergence of the original ADMM (\ref{ADMMS}) for some special cases.

In the following, we show one scenario that can sufficiently ensure the mentioned assumption for the specific model (\ref{ConP}) and thus guarantee the linear convergence of the the sequence $\{z^k\}$ represented by (\ref{relaxPR}).

\begin{theorem} \label{stronglip}
For the model (\ref{ConP}), if the function $g$ is differentiable and strongly convex, and $\nabla g$ is Lipschitz continuous near a solution point,
then we have
\begin{itemize}
\item[(1)]  The operator ${\cal B}:=\nabla g^*$ is both strongly monotone and Lipschitz continuous near the solution point.
%\item the relaxed Assumption A holds when the operator ${\cal B}:=\partial g^*$ is both coercive and Lipschitz near the solution;
\item[(2)]  The mapping $S_{\lambda,{\cal A},{\cal B}}^{-1}$ is Lipschitz continuous at the iterate $\tilde z^k:=J_{S_{\lambda,{\cal A},{\cal B}}}(z^k)$ with positive modulus when $k$ stays large enough.
\item[(3)] The sequence $\{z^k\}$ represented by (\ref{relaxPR}) converges linearly to a solution point of (\ref{ADMMPPAP}).
\end{itemize}
\end{theorem}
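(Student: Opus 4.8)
The plan is to treat the three assertions in order, reducing (2) and (3) to the abstract machinery of Section \ref{relCon}. For (1), both properties of $\mathcal{B}=\nabla g^*$ follow at once from the conjugacy results. Since $g$ is strongly convex (modulus $\mu_g$) near the solution, Property (i) of Lemma \ref{StrLip} makes $g^*$ differentiable with $\nabla g^*=\mathcal{B}$ Lipschitz continuous of modulus $L:=1/\mu_g$; since $\nabla g$ is Lipschitz continuous (modulus $L_g$) near the solution, Property (ii) of Lemma \ref{StrLip} makes $g^*$ strongly convex (modulus $1/L_g$) there, and then Lemma \ref{stco} shows $\mathcal{B}=\partial g^*$ is strongly monotone with modulus $\alpha:=1/L_g$. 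Thus $\mathcal{B}$ is locally both $\alpha$-strongly monotone and $L$-Lipschitz.

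The key reduction for (2) and (3) is to recognize that, because the ADMM corresponds to $c_k\equiv1$, the resolvent of $S_{\lambda,{\cal A},{\cal B}}$ is exactly $G_{\lambda,{\cal A},{\cal B}}$: from (\ref{operS}) we have $I+S_{\lambda,{\cal A},{\cal B}}=G_{\lambda,{\cal A},{\cal B}}^{-1}$, so $J_{S_{\lambda,{\cal A},{\cal B}}}=(I+S_{\lambda,{\cal A},{\cal B}})^{-1}=G_{\lambda,{\cal A},{\cal B}}$, and $z^*$ is a fixed point of $G_{\lambda,{\cal A},{\cal B}}$. By Theorem \ref{thm-conv} applied to the maximal monotone operator $T=S_{\lambda,{\cal A},{\cal B}}$, the sequence $\{z^k\}$ of (\ref{relaxPR}) converges to a solution $z^*$ of (\ref{ADMMPPAP}); hence for $k$ large $z^k$ lies in any prescribed neighborhood of $z^*$ and $p^k=J_{\lambda{\cal B}}(z^k)$ lies near $p^*=J_{\lambda{\cal B}}(z^*)$, so the local properties of $\mathcal{B}$ are available along the tail of the sequence. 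The remaining task is therefore to show $J_{S_{\lambda,{\cal A},{\cal B}}}=G_{\lambda,{\cal A},{\cal B}}$ is Lipschitz continuous at $z^*$ with constant strictly below $1$, after which Theorem \ref{rela} delivers both (2) and (3) with $L_G<1$.

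The hard part will be this contraction estimate, which is where Lemma \ref{operGPR} does the work. I would apply (\ref{contrGoper}) with $z'=z^*$; writing $J=J_{\lambda{\cal B}}$, $u=J(z)$, $u^*=J(z^*)=p^*$, it reads
$$\langle G_{\lambda,{\cal A},{\cal B}}(z)-z^*,\,z-z^*\rangle \ge \|G_{\lambda,{\cal A},{\cal B}}(z)-z^*\|^2+\langle (z-u)-(z^*-u^*),\,u-u^*\rangle.$$
Expanding the last term as $\langle z-z^*,u-u^*\rangle-\|u-u^*\|^2$ and using that $u=(I+\lambda\mathcal{B})^{-1}(z)$ gives $\lambda^{-1}(z-u)\in\mathcal{B}(u)$, the $\alpha$-strong monotonicity of $\mathcal{B}$ yields $\langle z-z^*,u-u^*\rangle\ge(1+\lambda\alpha)\|u-u^*\|^2$, so the cross term is at least $\lambda\alpha\|u-u^*\|^2$; the $L$-Lipschitz continuity of $\mathcal{B}$ gives $\|z-z^*\|\le(1+\lambda L)\|u-u^*\|$. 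Setting $\beta:=\lambda\alpha/(1+\lambda L)^2>0$, $d:=\|G_{\lambda,{\cal A},{\cal B}}(z)-z^*\|$, $D:=\|z-z^*\|$, Cauchy--Schwarz on the left-hand side produces the quadratic inequality
$$dD\ge d^2+\beta D^2,$$
whose solution is $d\le L_G D$ with $L_G=\tfrac12\bigl(1+\sqrt{1-4\beta}\bigr)$. The discriminant is nonnegative because $\alpha\le L$ for any monotone Lipschitz operator forces $\beta\le\alpha/(4L)\le\tfrac14$, while $\beta>0$ makes $L_G<1$ strictly.

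The one subtlety to flag is that $\alpha$ and $L$ are only local moduli, so this contraction bound is valid only for $z$ in the neighborhood of $z^*$ on which $\mathcal{B}$ enjoys both properties; this is precisely why the statement qualifies (2) with ``when $k$ stays large enough,'' since by the convergence established above the tail of $\{z^k\}$ enters that neighborhood. With $c_k\equiv1$ we then have $\sup_k L_k=L_G<1$, so Theorem \ref{rela} applies and gives simultaneously that $S_{\lambda,{\cal A},{\cal B}}^{-1}$ is Lipschitz continuous at the iterates $\tilde z^k=J_{S_{\lambda,{\cal A},{\cal B}}}(z^k)$ for large $k$ (assertion (2)) and that $\{z^k\}$ converges linearly to a solution of (\ref{ADMMPPAP}) (assertion (3)).
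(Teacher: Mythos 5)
Your proposal is correct and follows essentially the same route as the paper's proof: part (1) via Lemmas \ref{StrLip} and \ref{stco}, and parts (2)--(3) by combining Lemma \ref{operGPR} with the local strong monotonicity and Lipschitz continuity of ${\cal B}$ to obtain a coercivity bound of the form $\langle (I-J_{\lambda{\cal B}})(z)-(I-J_{\lambda{\cal B}})(z^*),\,J_{\lambda{\cal B}}(z)-J_{\lambda{\cal B}}(z^*)\rangle\ge\frac{\lambda\alpha}{(1+\lambda L)^2}\|z-z^*\|^2$, concluding that $G_{\lambda,{\cal A},{\cal B}}=J_{S_{\lambda,{\cal A},{\cal B}}}$ is a contraction at $z^*$, and then invoking Theorem \ref{rela} with $c_k\equiv 1$. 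The only deviations are minor and in your favor: where the paper bounds the inner product $\langle G(z)-G(z^*),z-z^*\rangle$ above by $\|z-z^*\|^2$ to get the constant $\sqrt{1-\lambda\alpha/(1+\lambda L)^2}$, you solve the quadratic $dD\ge d^2+\beta D^2$ directly and obtain the slightly sharper constant $\frac{1}{2}\bigl(1+\sqrt{1-4\beta}\bigr)$, and your explicit appeal to Theorem \ref{thm-conv} to justify that the tail of $\{z^k\}$ enters the neighborhood where the local moduli of ${\cal B}$ apply is handled more carefully than in the paper, which states the relevant inequalities for all $z,z'\in H$.
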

\begin{proof}
(1) According to Lemma \ref{StrLip}, we know that $g^*$ is differentiable. Thus,
${\cal B}:=\nabla g^*$ is both strongly monotone and Lipschitz continuous near the solution point of (\ref{DuaP})
 %[{\bf ``solution point" of which model? should be specified}]
 according to Lemmas \ref{StrLip} and \ref{stco}.
Thus, the first conclusion is proved.

(2) Next, we show that the Lipschitz constant of the operator $G_{\lambda,{\cal A}, {\cal B}}$ is less than 1.
Note that $\frac{1}{\lambda}((I-J_{\lambda \cal B})(z))\in {\cal B}(J_{\lambda \cal B}(z))$. Let us assume that the strongly monotone modulus of ${\cal B}$ is $\alpha$. That is,
\begin{eqnarray*}
\langle    (I-J_{\lambda \cal B})(z) -(I-J_{\lambda \cal B})(z'),J_{\lambda \cal B}(z)-J_{\lambda \cal B}(z')\rangle\ge \lambda\alpha \|J_{\lambda \cal B}(z)-J_{\lambda \cal B}(z')\|^2, \;\;\forall z,z' \in H.
\end{eqnarray*}
Moreover, let us assume that Lipschitz continuous constant of ${\cal B}$ is $\beta$. Then, we have
\begin{eqnarray*}
\label{}\|z-z'\|^2=\|J_{\lambda \cal B}(z)-J_{\lambda \cal B}(z')+\lambda{\cal B}(J_{\lambda \cal B}(z))-\lambda{\cal B}(J_{\lambda \cal B}(z'))\|^2\le(1+\lambda\beta)^2\|J_{\lambda \cal B}(z)-J_{\lambda \cal B}(z')\|^2, \;\;\forall z,z' \in H.
\end{eqnarray*}
Combining these two inequalities, we get
\begin{eqnarray}
\label{Bcoli1}
\langle    (I-J_{\lambda \cal B})(z) -(I-J_{\lambda \cal B})(z'),J_{\lambda \cal B}(z)-J_{\lambda \cal B}(z')\rangle\ge \frac{\lambda \alpha}{(1+\lambda\beta)^2}\|z-z'\|^2, \;\;\forall z,z' \in H.
\end{eqnarray}
Then, it follows from Lemma \ref{operGPR} and (\ref{Bcoli1}) that
 %[{\bf If $G_{\lambda, {\cal A},{\cal B}}$ is not single-values, the following writing is not good, you need to change it as what I write for Theorem 3.2. Several places below.}]
\begin{eqnarray*}
\|z-z'\|^2\ &\ge& \langle G_{\lambda,{\cal A}, {\cal B}}(z)-G_{\lambda,{\cal A}, {\cal B}}(z'), z-z'\rangle \nn \\
&\ge& \|G_{\lambda,{\cal A}, {\cal B}}(z)-G_{\lambda,{\cal A}, {\cal B}}(z')\|^2+\langle (I-J_{\lambda\cal B})(z)- (I - J_{\lambda \cal B}(z'), J_{\lambda \cal B}(z)-J_{\lambda \cal B}(z'))\rangle\nn\\
&\ge& \|G_{\lambda,{\cal A}, {\cal B}}(z)-G_{\lambda,{\cal A}, {\cal B}}(z')\|^2 +\frac{\lambda \alpha}{(1+\lambda\beta)^2}\|z-z'\|^2, \;\;\forall z,z' \in H,
\end{eqnarray*}
where the first inequality follows from the non-expansiveness of the operator $ G_{\lambda,{\cal A}, {\cal B}}$; the second inequality is because of (\ref{contrGoper}) and the last inequality holds because of (\ref{Bcoli1}).
Consequently, we prove that
$$\|G_{\lambda,{\cal A}, {\cal B}}(z)-
G_{\lambda,{\cal A}, {\cal B}}(z')\|\le \sqrt{1-\frac{\lambda \alpha}{(1+\lambda\beta)^2}}\|z-z'\|,\;\;\forall z,z' \in H.
$$
Recall the definitions of the strongly monotonicity and the Lipschitz continuity of  ${\cal B}$. We have $\alpha \le \beta$ and thus the above inequality means the fact that the Lipschitz continuity constant of the operator $G_{\lambda,{\cal A}, {\cal B}}$ is less than 1.
Finally, it follows from Corollary \ref{rela} with $T=S_{\lambda,{\cal A},{\cal B}}$, $c_k\equiv1$ and  $G=G_{\lambda,{\cal A},{\cal B}}$ that
the mapping $S_{\lambda,{\cal A},{\cal B}}^{-1}$ is Lipschitz continuous at the iterate $\{\tilde z^k\}$ with  positive modulus when $k$ stays large enough, where $\tilde z^k:=J_{S_{\lambda,{\cal A},{\cal B}}}(z^k)$.

(3) Finally, the linear convergence of the sequence $\{z^k\}$ follows assertion (2) of Theorem \ref{rela} with $c_k\equiv1$ and assertion (2) immediately. The proof is complete. $\hfill$
\end{proof}

Note that the linear convergence of $\{z^k\}$ represented by (\ref{relaxPR}) can be easily specified as the linear convergence of the generalized ADMM scheme (\ref{RADMMS}) in terms of the variables in (\ref{GConP}) and its dual. We summarize the specifications in the following corollary and omit the proofs.

\begin{corollary} \label{Theor2}
When the sequence $\{z^k\}$ represented by (\ref{relaxPR}) converges linearly to a solution point of (\ref{ADMMPPAP}), we have
\begin{itemize}
%\item [(1)] The sequence $\{z^k\}$ converges Q-linearly to a solution point $z^*$ of the problem (\ref{ADMMPPAP}).
\item [(1)] The sequence $\{p^k\}$ converges R-linearly to a solution point $p^*$ of the dual problem (\ref{DuaP}).
\item [(2)] The sequence $\{w^k\}$ converges R-linearly to a solution point $w^*$ of the primal problem (\ref{GConP}).
%\item [(4)] The sequence $\{q^k\}$ converges R-linearly to $p^*$.
\item [(3)] The sequence $\{Mx^{k}\}$ converges R-linearly to $Mx^*$, where $x^*$ is a solution point of the primal problem (\ref{ConP}). Moreover, if $M$ is full column rank, then the sequence $\{x^k\}$ converges R-linearly to $x^*$, where $x^*=(M^\top M)^{-1} M^\top  (M x^*)$.
\end{itemize}
\end{corollary}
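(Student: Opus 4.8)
The plan is to exploit a single structural fact: R-linear (geometric-rate) convergence is inherited through any nonexpansive or Lipschitz-continuous map, and to combine this with the explicit dictionary between $\{z^k\}$ and $\{(x^k,w^k,p^k)\}$ recorded in Lemma \ref{relPRRADMM} and its proof. Write $\|z^k-z^*\|\le C_0\rho^k$ for the geometric bound supplied by the hypothesis (with $\rho\in(0,1)$), and set $p^*:=J_{\lambda{\cal B}}(z^*)$ and $w^*:=\tfrac{1}{\lambda}(z^*-p^*)$. By the two facts recalled from \cite{EcBe} following (\ref{DRz}), $p^*$ solves (\ref{ABsol}) (equivalently the dual (\ref{DuaP})), while $w^*\in\partial g^*(p^*)$ is the $w$-component of a primal solution $(x^*,w^*)$ of (\ref{GConP}); these identifications pin down the limit points named in the statement. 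I would stress at the outset that each derived sequence is only R-linearly convergent because passing through a nonexpansive map preserves the geometric error bound but not the Q-linear ratio, which is exactly why the statement asserts R-linear rather than Q-linear convergence.

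For part (1), Lemma \ref{relPRRADMM} gives $p^k=J_{\lambda{\cal B}}(z^k)$, and $J_{\lambda{\cal B}}$ is nonexpansive (cf.\ Lemma \ref{proxity}), so $\|p^k-p^*\|=\|J_{\lambda{\cal B}}(z^k)-J_{\lambda{\cal B}}(z^*)\|\le\|z^k-z^*\|\le C_0\rho^k$, which is R-linear convergence to the dual solution $p^*$. For part (2), the identity $z^k=p^k+\lambda w^k$ yields $w^k=\tfrac{1}{\lambda}(z^k-p^k)$, whence $\|w^k-w^*\|\le\tfrac{1}{\lambda}\bigl(\|z^k-z^*\|+\|p^k-p^*\|\bigr)\le\tfrac{2}{\lambda}C_0\rho^k$ by the bound just established; this gives R-linear convergence of $\{w^k\}$, and $w^k\in{\cal B}(p^k)$ passes in the limit to $w^*\in\partial g^*(p^*)$, confirming $w^*$ is a primal optimal component.

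Part (3) is where I expect the main (though still modest) obstacle, namely expressing $Mx^k$ in terms of quantities already controlled. I would reuse the auxiliary sequence $q^k=p^k+\lambda(Mx^{k+1}-w^k)$ of (\ref{qk}), whose proof shows $q^k=J_{\lambda{\cal A}}(2p^k-z^k)$; nonexpansiveness of $J_{\lambda{\cal A}}$ then gives $\|q^k-q^*\|\le 2\|p^k-p^*\|+\|z^k-z^*\|\le 3C_0\rho^k$, where the limit $q^*=z^*-\lambda w^*=p^*$ follows from the identity $\tilde z^k=q^k+\lambda w^k$ in the proof of Lemma \ref{relPRRADMM} together with $J_{S_{\lambda,{\cal A},{\cal B}}}(z^*)=z^*$. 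Rearranging (\ref{qk}) as $Mx^{k+1}=w^k+\tfrac{1}{\lambda}(q^k-p^k)$ and using the constraint $Mx^*=w^*$ of (\ref{GConP}), a triangle inequality bounds $\|Mx^{k+1}-Mx^*\|$ by a constant multiple of $\rho^k$, i.e.\ R-linear convergence of $\{Mx^k\}$ up to the harmless index shift. Finally, when $M$ has full column rank, $M^\top M$ is invertible and $x^k=(M^\top M)^{-1}M^\top(Mx^k)$, so $\|x^k-x^*\|\le\|(M^\top M)^{-1}M^\top\|\,\|Mx^k-Mx^*\|$ inherits the same geometric rate, completing the argument.
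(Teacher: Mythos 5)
Your proof is correct, and there is nothing in the paper to compare it against: the authors state immediately before the corollary that they ``summarize the specifications \dots and omit the proofs.'' Your route --- the dictionary $z^k=p^k+\lambda w^k$, $p^k=J_{\lambda{\cal B}}(z^k)$, $q^k=J_{\lambda{\cal A}}(2p^k-z^k)$ from Lemma \ref{relPRRADMM} and its proof, combined with nonexpansiveness of the resolvents --- is exactly the intended one, and your opening remark that composition with a nonexpansive map preserves the geometric error bound but not the Q-linear ratio correctly explains why the statement asserts R-linear convergence only. One spot deserves tightening. In part (2) you assert that $w^*\in\partial g^*(p^*)$ already makes $w^*$ the $w$-component of a primal solution of (\ref{GConP}), but the fact recalled from \cite{EcBe} runs in the opposite direction (primal solution $\Rightarrow$ subgradient inclusions), and an arbitrary element of $\partial g^*(p^*)$ need not be primal-optimal. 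What closes the gap is precisely the fixed-point identity you derive only later, in part (3): $z^*=J_{S_{\lambda,{\cal A},{\cal B}}}(z^*)$ gives $p^*=J_{\lambda{\cal A}}(2p^*-z^*)$, i.e. $-w^*=-\frac{1}{\lambda}(z^*-p^*)\in{\cal A}(p^*)$, which together with $w^*\in{\cal B}(p^*)$ (obtained from $w^k\in{\cal B}(p^k)$ and graph closedness of the maximal monotone ${\cal B}$) yields the KKT system for (\ref{GConP}), hence a primal solution $(x^*,w^*)$ with $Mx^*=w^*$ (using the composition structure ${\cal A}=\partial[f^*\cdot(-M^\top)]$ implicit in the paper's setup). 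So derive $q^*=p^*$, equivalently $-w^*\in{\cal A}(p^*)$, \emph{before} part (2) rather than after; with that reordering, the rest of your argument --- including the bounds $\|p^k-p^*\|\le C_0\rho^k$, $\|w^k-w^*\|\le\frac{2}{\lambda}C_0\rho^k$, $\|Mx^{k+1}-Mx^*\|\le\frac{6}{\lambda}C_0\rho^k$, and the full-column-rank step $\|x^k-x^*\|\le\|(M^\top M)^{-1}M^\top\|\,\|Mx^k-Mx^*\|$ --- is complete and correct.
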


\begin{remark}
Under one of the following conditions, we can also prove the conclusion ``The mapping $S_{\lambda,{\cal A},{\cal B}}^{-1}$ ($S_{\lambda,{\cal A},{\cal B}}$ defined in (\ref{operS})) exists and it is Lipschitz continuous at $0$ with positive modulus". We omit the proof because it is analogous to that of Theorem \ref{stronglip}.
\begin{itemize}
%\item The matrix $M$ is full column rank, the function $g$ is strongly convex and $\nabla g$ is Lipschitz continuous near the solution.
\item[(1)] The matrix $M$ is full row rank, the function $f$ is strongly convex and $\nabla f$ is Lipschitz continuous near $x^*$, where $x^*$ is a solution point of (\ref{ConP}).
\item[(2)] The matrix $M$ is full row rank, the function $f$ is  convex and $g$ is strongly convex near $Mx^*$,
 and $\nabla f$ is Lipschitz continuous near $x^*$, where $x^*$ is a solution point of (\ref{ConP}).
\item[(3)]  The matrix $M$ is full rank, the function $f$ is strongly convex near $x^*$ and $g$ is convex, and  $\nabla g$  are Lipschitz continuous near $Mx^*$, where $x^*$ is a solution point of (\ref{ConP}).
\end{itemize}
Together with the condition in Theorem \ref{stronglip}, these conditions coincide with the conditions in \cite{DYin2} (when $B=-I$ and $b=0$ in the model (2) therein) to ensure the linear convergence of the generalized ADMM (\ref{RADMMS}) for solving (\ref{ConP}). In other words, the assumption ``The mapping $S_{\lambda,{\cal A},{\cal B}}^{-1}$ ($S_{\lambda,{\cal A},{\cal B}}$ defined in (\ref{operS})) exists and it is Lipschitz continuous at $0$ with positive modulus" is weaker than these conditions.
\end{remark}

\begin{remark}
In \cite{CYuan}, the linear convergence of the generalized ADMM (\ref{RADMMS}) for solving (\ref{ConP}) is ensured under the following assumptions:
(1). $M$ is full rank, $f$ is convex and differentiable, $\nabla f$ is Lipschitz continuous, and $g$ is strongly convex; (2). $f$ is strongly convex, $g$ is convex and differentiable, and $\nabla g$ is Lipschitz continuous. We here give some less strengthen conditions.
\end{remark}

\section{Conclusion}\label{conclusion}

In this paper, we extend the condition in \cite{Rock76} that can ensure the linear convergence of the proximal point algorithm (PPA) to a generalized PPA scheme. Both the exact and inexact versions of the generalized PPA are studied, and their linear convergence rates are established under the same condition as the original PPA in \cite{Rock76}. We specifically consider two convex optimization models and study the linear convergence rates for generalized versions of the benchmark augmented Lagrangian method (ALM) and the alternating direction method of multipliers (ADMM), both are special cases of the proposed generalized PPA. Some concrete conditions are specified in the convex optimization contexts. It is interesting to find that the condition in \cite{Rock76} turns out to be still weaker than most of the existing conditions in the literature that were proposed to ensure the linear convergence for various specific forms of the PPA. This study provides a unified understanding of the linear convergence of a family of operator splitting methods which have found a board spectrum of applications in various areas. These methods include the mentioned ALM, ADMM, their generalized and inexact versions, the Douglas-Rachford splitting method, the Peaceman-Rachford splitting method, and their generalized versions.

\end{document}